\definecolor{tocolor}{rgb}{.1,.1,.5}
\definecolor{urlcolor}{rgb}{.2,.2,.6}
\definecolor{linkcolor}{rgb}{.1,.1,.6}
\definecolor{citecolor}{rgb}{.6,.2,.1}
\definecolor{darkgreen}{rgb}{0.0, 0.5, 0.0}
\providecommand{\U}[1]{\protect\rule{.1in}{.1in}}
\newtheorem{theorem}{Theorem}[section]
\newtheorem*{theorem*}{Theorem}
\newtheorem*{claim*}{Claim}
\newtheorem{theoremM}{Theorem}
\newtheorem{corollary}[theorem]{Corollary}
\newtheorem{corollaryM}[theoremM]{Corollary}
\newtheorem{proposition}[theorem]{Proposition}
\newtheorem{propositionM}[theoremM]{Proposition}
\newtheorem*{proposition*}{Proposition}
\newtheorem{lemma}[theorem]{Lemma}
\theoremstyle{definition}
\newtheorem{definition}[theorem]{Definition}
\newtheorem{example}[theorem]{Example}
\newtheorem{remark}[theorem]{Remark}
\numberwithin{equation}{section}
\newcommand{\mf}[1]{\mathfrak{#1}}
\newcommand{\toto}{\rightrightarrows}
\newcommand{\LL}{\mathcal{L}}                   % Lie derivatives
\newcommand{\GG}{\mathcal{G}}                   % Groupoid
\renewcommand{\:}{\colon}                       % Better spacing for maps f:X->Y
\renewcommand{\d}{{\mathrm{d}}}                   % Exterior derivative
\newcommand{\D}{{\mathrm{D}}}                   % Derivation 
\newcommand{\DD}{\mathcal{D}}                   % Derivation bundles
\newcommand{\FF}{\mathcal{F}}                   % Foliations
\newcommand{\Hom}{\operatorname{Hom}}       % Hom spaces
\newcommand{\End}{\operatorname{End}}       % Endomorphism
\newcommand{\id}{\operatorname{id}}         % Identity
\newcommand{\im}{\operatorname{im}}         % Image of a map
\newcommand{\Der}{\operatorname{Der}}       % Derivations
\newcommand{\UD}{\Breve{\mathrm{D}}}                        % Tautological/universal bundle
\newcommand{\C}{\mathcal{C}}              %  Cartan distribution
\newcommand{\HH}{\mathcal{H}}				% Cartan distribution on groupoid
\DeclareMathOperator{\Diff}{Diff}           % Group of Diffeomorphisms
\newcommand{\loc}{\mathrm{loc}}             % Abbreviation of local
\newcommand{\Diffloc}{\Diff_{\loc}}         % Pseudogroup of local diffeomorphisms
\newcommand{\dom}{\operatorname{dom}}       % domain (of a local section)
\DeclareMathOperator{\Bis}{Bis}             % Bisections of a Lie groupoid 
\newcommand{\Bisloc}{\Bis_{\loc}}           % Local bisections of a Lie groupoid
\DeclareMathOperator{\pr}{pr}               % Projections
\newcommand{\pullback}[2]{{}_{#1}\kern-\scriptspace{\times}_{#2}}                   %Pullback of two specified maps
\newcommand{\onabla}{{\overline{\nabla}}}   % Bundle of pointwise metrics
\DeclareMathOperator{\GL}{GL}               % General Linear group
\newcommand{\paction}{\mathbin{\hat{\curvearrowright}}}
\newcommand{\pltimes}{\hat{\ltimes}}
\begin{document}
\title{Relative algebroids and symmetries of Pfaffian fibrations}

\author{Wilmer Smilde}

\address{Department of Mathematics, University of Illinois at Urbana-Champaign, 1409 W. Green Street, Urbana, IL 61801 USA}
\email{wsmilde2@illinois.edu}

\date{\today}

\begin{abstract}
    Relative algebroids and Pfaffian fibrations are two frameworks recently developed to study geometric structures and PDEs with symmetries, but have structurally different foundations. In this article, we clarify the relation between the two. We show that every Pfaffian fibration canonically induces a relative algebroid, and that their prolongations and local solutions coincide. Moreover, we introduce two notions of symmetries of Pfaffian fibrations, namely internal symmetries and Pfaffian symmetries, and develop the theory for actions of Pfaffian groupoids by internal/Pfaffian symmetries. We show that such actions preserve the underlying relative algebroid in an appropriate sense. Our results apply in particular to partial differential equations with Lie pseudogroup symmetries.
\end{abstract}

\thanks{This work was partially supported by NSF grant DMS-2303586 and UIUC Campus Research Board Award RB25014.}

\maketitle

\setcounter{tocdepth}{1}
\tableofcontents

\section*{Introduction}

This article reconciles two modern approaches to PDEs and geometric structures with their symmetries, namely Pfaffian fibrations and groupoids, and relative algebroids.

The Pfaffian approach, developed by Accornero, Cattafi, Crainic, Yudilevich and Salazar in \cite{Accornero2021,AccorneroCattafi2022, AccorneroCattafiCrainicSalazarBook, Cattafi2020, CattafiCrainicSalazar2020, CrainicYudilevich2024, Salazar2013, Yudilevich2016}, studies the essential data of a PDE, encoding it in a Pfaffian fibration, as well as the essential data of a Lie pseudogroup: a Pfaffian groupoid. The focus of their work has been on geometric structures. A comprehensive treatment of the theory will be published in the upcoming book \cite{AccorneroCattafiCrainicSalazarBook}. The structural object of a Pfaffian fibration is a \emph{distribution}.

The study of relative algebroids, initiated by Yudelivich, Fernandes and the author of this article in \cite{FernandesSmilde2025} (see also \cite{Smilde2025Notes}), draws from aspects of Lie theory to study existence and classification problems for geometric structures \cite{Bryant2014}, and unifies the theory of Lie algebroids with that of formal PDEs. It explains the appearance of Lie algebroids behind several classification problems for geometric structures \cite{Bryant2001, FernandesStruchiner2014, FernandesStruchiner2019, FernandesStruchiner2021}, and codifies what is left of a PDE after taking the quotient by symmetries. The structural object of a relative algebroid is a \emph{relative derivation}.

Both approaches are seemingly different but build on the formal theory of PDEs, and many analogies can be observed. The present article establishes a robust relationship between the frameworks in two fundamental ways:
\begin{enumerate}
	\item Every Pfaffian fibration gives rise to a relative algebroid in a natural way.  The underlying relative algebroid shares the same formal theory as the Pfaffian fibration it is induced from.
	\item We introduce a framework for both pseudogroup and groupoid symmetries of Pfaffian fibrations, and show that they preserve the underlying relative algebroid in an appropriate sense.
\end{enumerate}
We believe that these results provide a clear picture of their interactions. 

The framework for symmetries of Pfaffian fibrations presented here is a generalization of PDEs with symmetries, but with some adaptations to make the interaction with relative algebroids more clear. More specifically:

\begin{enumerate}[label = (\roman*)]
	\item We introduce two different types of symmetries: \textbf{internal} and \textbf{Pfaffian symmetries}. Internal symmetries are a direct generalization of internal symmetries for PDEs \cite{AndersonKamranOlver1993}, while Pfaffian symmetries of a Pfaffian fibration interact well with the underlying relative algebroid.
	\item We show that internal symmetries \textit{prolong} to Pfaffian symmetries, and that conversely every Pfaffian symmetry of the prolongation is the prolongation of an internal symmetry when the Pfaffian fibration is \emph{involutive} (Theorem \ref{thm:InternalAndPfaffianSymmetries}).
	\item We develop a theory of groupoid actions of Pfaffian groupoids on Pfaffian fibrations by internal as well as by Pfaffian symmetries. Our notions more general than the \emph{Pfaffian actions} developed in \cite{Accornero2021, AccorneroCattafiCrainicSalazarBook, Cattafi2020, Salazar2013}, which allows for natural examples of PDEs with symmetries that are not Pfaffian actions (see Section \ref{sec:RemarksOnPfaffianActionsAndPDEsWithSymmetries}). 
	\item Pfaffian and internal actions are related as follows. The prolongation of an action by internal symmetries is a \emph{partial action} by Pfaffian symmetries. Partial groupoid actions have appeared in the literature before \cite{AnantharamanDelaroche2020, MarinPinedoRodrigue2025}, and seem to appear naturally in our context.
\end{enumerate}
The various notions of in this article are related as depicted by the following diagram.
\[
	\begin{tikzcd}[column sep=6em]
	{\substack{ \mbox{ Pfaffian fibrations and } \\
				\mbox{ Pfaffian groupoids } }
			} \arrow[r] & \substack{ \mbox{ Relative }  \arrow[dl, shift left, dashed, "\text{Applications}"] \\ 
						\mbox{ algebroids } } \\
	{\substack{ \mbox{ PDEs and } \\
				\mbox{ Lie pseudogroups } }
	}	\arrow[u, hook] \arrow[ru, shift left] &				
	\end{tikzcd}
\]
The dashed arrow---indicating applications of the results of this article to PDEs with symmetries---will be a topic of further study. We will briefly indicate to the possible directions for applications at the end of this introduction. Furthermore, we believe that the connection between symmetries of Pfaffian fibrations and the current literature on symmetries of PDEs could be further developed.
\pagebreak

\subsection*{Overview of the article} 
In this paper, we work with \textbf{algebroids relative to a foliation}. Such an object $(B, \onabla, \D)$ consists of: 
\begin{itemize}[topsep=0pt]
	\item a manifold with foliation $(M, \FF)$, where $\FF\subset TM$ is an involutive subbundle,
	\item a vector bundle $(B, \onabla)\to (M, \FF)$ with flat $\FF$-connection $\onabla$,
	\item a derivation of degree 1 \textit{relative to $\FF$} :
	\[
		\D\: \Omega^\bullet_{(B, \onabla)} \to \Omega^{\bullet+1}_B.
	\]
\end{itemize}  
Here, $\Omega^\bullet_{B}$ stands for the sheaf of forms on $B$ (i.e. sections of $\wedge^\bullet B^*$), and $\Omega^\bullet_{(B, \onabla)}$ denotes the sheaf of flat forms (w.r.t. $\mathrel{\onabla}$). We refer to Appendix \ref{sec:RelativeAlgebroids}, or the articles \cite{FernandesSmilde2025, Smilde2025Notes} for more details on this notion.

\subsubsection*{From Pfaffian fibrations to relative algebroids} In Section \ref{sec:PfaffianFibrations}, we recall the basic definitions of Pfaffian fibrations and explain how every Pfaffian fibration induces a relative algebroid in a natural way. In this article, we mostly work from the perspective of distributions, so, to us, a \textbf{Pfaffian fibration} $(P, \C, \pi)$ is a submersion $\pi\: P\to X$ with a distribution $\C \subseteq TP$ that is transverse to $\ker T\pi$ and for which $\C^\pi:= \C \cap \ker T\pi$ is involutive. A (local) \textbf{solution} to a Pfaffian fibration is a (local) holonomic section of $\pi$, i.e. a (local) section $\sigma\: X\dasharrow P$ with image tangent to $\C$. 

A particularly important example is the bundle of first jets $\pi\: J^1q\to X$ of a submersion $q\: Q\to X$. The \textbf{Cartan distribution} $\C^1$ makes $(J^1q, \C^1, \pi)$ into a Pfaffian fibration. Locally, this is the universal example \cite[Proposition 3.23]{CattafiCrainicSalazar2020}. In this case, the holonomic sections are precisely those of the form $j^1\sigma$ for some section $\sigma$ of $q$.

A \textbf{partial differential equation (PDE)} is a subspace $\mathcal{E}\subset J^1q$ for some submersion $q: Q\to X$. By restricting the Cartan distribution $\C^1$ on $J^1q$ to $\mathcal{E}$ we obtain a Pfaffian fibration $(\mathcal{E}, \C^1, \pi)$ whose holonomic sections correspond precisely to the solutions of $\mathcal{E}$. 

A central result in \cite{FernandesSmilde2025} is that every PDE canonically induces a relative algebroid. If $\mathcal{E}\subset J^1q$ is a PDE on sections of some submersion $q\: Q\to X$, then the vector bundle of the relative algebroid underlying the PDE is $\pi^*TX\to J^1q$, and the derivation corresponds to the horizontal derivation in the corner of the variational bicomplex.

We extend this result to Pfaffian fibrations in Section \ref{sec:PfaffianFibrationsAsRelativeAlgebroids}. In particular, we show that to any Pfaffian fibration $(P, \C, \pi)$ one can naturally associate a relative algebroid $(\pi^*TX, \onabla, \D^\C)$, where the derivation is entirely determined by the distribution $\C\subset TP$ and the de Rham differential $\d$ on $TX$. This algebroid is not relative to the submersion $\pi$ but rather relative to the foliation $\C^\pi= \C \cap \ker T\pi$. The following theorem establishes a tight relationship between the formal theory of a Pfaffian fibration and of its underlying relative algebroid.

\begin{theoremM}[Theorem \ref{thm:PfaffianFibrationsAsRelativeAlgebroids}]
	Let $(P, \C, \pi)$ be a Pfaffian fibration with underlying relative algebroid $(\pi^*TX, \onabla, \D^\C)$. Then:
	\begin{enumerate}
		\item The prolongation spaces of $(P, \C, \pi)$ and of $(\pi^*TX, \onabla, \D^\C)$ are canonically isomorphic. In particular, $(P, \C, \pi)$ is 1-integrable iff $(\pi^*TX, \onabla, \D^\C)$ is.
		\item The relative algebroid underlying the prolongation of $(P, \C, \pi)$ is canonically isomorphic to the prolongation of $(\pi^*TX, \onabla, \D^\C)$.
		\item Germs of holonomic section of $(P, \C, \pi)$ are in one-to-one correspondence with germs of realizations of $(\pi^*TX, \onabla, \D^\C)$. 
	\end{enumerate}
\end{theoremM}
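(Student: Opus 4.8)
The plan is to prove the three parts in order, building each on the previous one, with the core work being a careful unwinding of what the derivation $\D^\C$ encodes in terms of the distribution $\C$. The key observation driving everything is that, over a point $p \in P$, the data of $\D^\C$ on the degree-$0$ and degree-$1$ parts is equivalent to the data of the subspace $\C_p \subset T_pP$ together with its transversality to $\ker T_p\pi$: indeed, a section of $\pi^*TX$ near $p$ pulls back, via $\C$ being a complement to $\ker T\pi$, to a unique $\C$-horizontal lift, and $\D^\C$ applied to pullbacks of $1$-forms on $X$ records the failure of these lifts to close up, i.e. the de Rham differential transported through $\C$. I would begin by making this dictionary precise (it is essentially the construction of $\D^\C$ recalled just before the theorem), and then observe that a $1$-jet of a holonomic section at $p$ — equivalently a point of the prolongation $P^{(1)}$ in the Pfaffian sense — is exactly a choice of $\C$-admissible tangent $n$-plane at $p$ projecting isomorphically to $T_xX$, which is literally the same as a point of the prolongation of the relative algebroid as defined in Appendix \ref{sec:RelativeAlgebroids}. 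This gives the canonical bijection in part (1); that it is a diffeomorphism of fibered manifolds follows because on both sides the prolongation sits inside the same Grassmannian bundle $\mathrm{Gr}_n(TP) \to P$ cut out by the same linear-algebraic conditions. The statement about $1$-integrability is then immediate, since $1$-integrability on either side is by definition the surjectivity of the same projection $P^{(1)} \to P$.

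For part (2), I would check that the identification from part (1) intertwines the two derivations. Concretely, the relative algebroid underlying the prolongation $(P^{(1)}, \C^{(1)}, \pi^{(1)})$ is $((\pi^{(1)})^* TX, \onabla^{(1)}, \D^{\C^{(1)}})$, and I must show this agrees with the prolongation of $(\pi^*TX, \onabla, \D^\C)$ as constructed in the appendix. Both derivations are degree-$1$ derivations relative to the respective foliations, so by a standard argument it suffices to compare them on generators: functions pulled back from $P$ (where both reduce to the horizontal/tautological differential determined by the contact structure $\C^{(1)}$) and on pullbacks of $1$-forms from $X$ (where both are governed by $\d$ on $TX$ transported through the prolonged distribution). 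The point is that the prolongation operation on relative algebroids was designed precisely to mirror the geometric prolongation of the Cartan-type distribution, so the two constructions land on the same derivation; the verification is a naturality check, using that $\C^{(1)} \cap \ker T\pi^{(1)}$ and the prolonged flat connection $\onabla^{(1)}$ correspond under the isomorphism of part (1).

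For part (3), I would set up the correspondence between germs of holonomic sections $\sigma \colon X \dashrightarrow P$ and germs of realizations of $(\pi^*TX, \onabla, \D^\C)$. A realization is, by the appendix, a foliated map from a germ of $(X, \text{pt})$ (or rather an appropriate source) together with a bundle map $TX \to \sigma^*(\pi^*TX) = TX$ — the identity — intertwining $\d$ and $\D^\C$, and I would show that a holonomic section $\sigma$ produces exactly such data via $\sigma^* \C = $ full tangent space, while conversely the intertwining condition forces the graph of the realization to be tangent to $\C$, hence holonomic. This is essentially the infinitesimal manifestation of the $1$-jet correspondence in part (1), integrated up. The main obstacle I anticipate is bookkeeping in parts (2)–(3): the appendix's prolongation of a relative algebroid and the appendix's definition of realization are phrased abstractly in terms of sheaves of flat forms and foliated base maps, whereas the Pfaffian side is phrased in terms of distributions and jets, so the real work is translating fluently between the two languages and checking that no compatibility (flatness with respect to $\onabla$, involutivity of $\C^\pi$, the foliated nature of the base) is lost in translation. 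Once the dictionary of the paragraph-one observation is set up cleanly, each of the three parts should follow by a generators-and-naturality argument rather than any hard analysis.
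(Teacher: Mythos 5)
Your proposal has a genuine gap in part (1), and this gap propagates. A point of the prolongation $P^{(1)}$ is \emph{not} merely ``a choice of $\C$-admissible tangent $n$-plane at $p$ projecting isomorphically to $T_xX$'' --- that describes the \emph{partial} prolongation $J^1_\C P$, i.e.\ the condition $h^*\theta_\C = 0$ alone. The prolongation $P^{(1)}$ additionally requires the curvature condition $h^*\kappa_\C = 0$, and on the algebroid side the prolongation space requires the completion condition $\tilde\D_x\circ \D^\C = 0$ on top of the extension condition $\Pi(\tilde\D_x)=\D^\C_x$. The entire substance of part (1) is matching these two second-order conditions, and your proposal never addresses it: the phrase ``cut out by the same linear-algebraic conditions'' inside the Grassmannian bundle is exactly the assertion that needs proof. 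The paper's route is to use the identification $J^1\pi\cong\pi_*^{-1}(\d)\subset \DD^1_{\pi^*TX}$ via the symbol map, under which $h_x^*\theta_\C=0$ becomes ``$h_x$ defines a pointwise extension of $\D^\C$'' and then $h_x^*\kappa_\C=0$ becomes ``$\D^{h_x}\circ\D^\C=0$''. Without some version of this second step your bijection only identifies the partial prolongations.

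For part (2), your ``compare on generators'' plan is not wrong in spirit, but as written it assumes the conclusion (``the prolongation operation \ldots was designed precisely to mirror \ldots so the two constructions land on the same derivation''). The paper avoids this by observing that both objects are manufactured from the \emph{same} PDE $P^{(1)}\subset J^1\pi$: the Pfaffian prolongation is the Pfaffian fibration of that PDE, while the algebroid prolongation is (by the restriction construction of the appendix) the relative algebroid underlying that same PDE; no generator computation is needed. For part (3), note that a realization is a bundle map $\Theta\colon TU\to\pi^*TX$ covering an arbitrary map $r\colon U\to P$, not a section of $\pi$; the correspondence with holonomic sections only holds after factoring $r=\sigma\circ(\pi\circ r)$ through the local diffeomorphism $\pi\circ r$, i.e.\ \emph{modulo diffeomorphism}. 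Your setup quietly assumes the realization already has the form of a section, which skips this (minor but real) step.
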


\subsubsection*{Pfaffian groupoids} In Section \ref{sec:PfaffianGroupoids}, we recall some general notions about Pfaffian groupoids. In this article, a Pfaffian groupoid $(\GG, \HH)\toto M$ is a Lie groupoid $\GG\toto M$ with a multiplicative distribution $\HH\subset T\GG$ that makes the source map $s\: \GG\to M$ into a Pfaffian fibration. Because a Pfaffian groupoid is in particular a Pfaffian fibration, we can talk about prolongations and $k$-integrability. 

\subsubsection*{Symmetries of Pfaffian fibrations}
In Section \ref{sec:SymmetriesOfPfaffianFibrations}, we turn to our second theme of symmetries of Pfaffian fibrations. The essential distributions of a Pfaffian fibration $(P, \C, \pi)$ are $\C$ and $\C^\pi = \C \cap \ker T\pi$. The pseudgroups of \textbf{internal symmetries} and \textbf{Pfaffian symmetries} are, respectively, 
\begin{align*}
	\Diffloc(P,\C) &= \{ \varphi\in \Diffloc(P) \ | \ T\varphi(\C)\subseteq \C \},\\
	\Diffloc(P, \C, \pi) &= \{ \varphi\in \Diffloc(P, \C) \ | \ T \varphi(\C^\pi) \subseteq \C^\pi \}.
\end{align*}
We show that:
\begin{enumerate}
	\item Pfaffian symmetries induce local isomorphisms of the underlying relative algebroid (Proposition \ref{prop:PfaffianSymmetriesInduceAlgebroidSymmetries}).
	\item Internal symmetries of $(P, \C, \pi)$ induce Pfaffian symmetries of the prolongation $(P^{(1)}, \C^1, \pi^{(1)})$. Moreover, when $(P, \C, \pi)$ is \emph{involutive}, Pfaffian symmetries of the prolongation are (locally) prolongations of internal symmetries (Theorem \ref{thm:InternalAndPfaffianSymmetries}).
\end{enumerate}

Perhaps more interesting are the Pfaffian groupoids of internal and Pfaffian symmetries, defined as subgroupoids of the groupoid first jets of diffeomorphisms:
\begin{align*}
	\GG_\C &= \{ j^1_x\varphi \in J^1\Diffloc(P) \ | \ T_x\varphi(\C_x) \subseteq \C_{\varphi(x)}\},\\
	\GG_{\C, \pi} &= \{ j^1_x\varphi\in J^1\Diffloc(P) \ | \  T_x\varphi(\C_x) \subseteq \C_{\varphi(x)} , \ T_x\varphi(\C^\pi_x) \subseteq \C^\pi_{\varphi(x)} \},
\end{align*}
and are equipped with the Cartan distribution $\HH^1$ inherited from $J^1\Diffloc(P)$. The central result of this part is the following:

\begin{theoremM}[Theorem \ref{thm:PfaffianDerivationInvariant}]
	Let $(P, \C, \pi)$ be a Pfaffian fibration with underlying relative algebroid $(\pi^*TX, \onabla, \D^\C)$. Then $\D^\C\in \Gamma(\DD^1_{\pi^*TX, \onabla)})$ is an invariant section for the canonical representation $\GG_{\C, \pi}^{(1)} \curvearrowright \DD^1_{(\pi^*TX, \onabla)}$.  
\end{theoremM}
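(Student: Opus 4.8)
The plan is to unwind the notion of ``invariant section'' until the statement becomes the assertion that Pfaffian symmetries preserve the underlying relative derivation, which is precisely Proposition~\ref{prop:PfaffianSymmetriesInduceAlgebroidSymmetries}. Recall that $\D^\C$ is a global section of the bundle $\DD^1_{(\pi^*TX,\onabla)}\to P$, that $\GG_{\C,\pi}^{(1)}$ is a groupoid over $P$ (a subgroupoid of $J^2\Diffloc(P)\toto P$), and that every arrow of $\GG_{\C,\pi}^{(1)}$ is, by construction, the $2$-jet $j^2_p\varphi$ of a local Pfaffian symmetry $\varphi\in\Diffloc(P,\C,\pi)$. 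By definition of the canonical representation on the bundle of degree-$1$ relative derivations, invariance of $\D^\C$ means that for every such arrow $g=j^2_p\varphi$, with source $p$ and target $p'=\varphi(p)$, the fiberwise action $g\cdot(-)\colon(\DD^1_{(\pi^*TX,\onabla)})_p\to(\DD^1_{(\pi^*TX,\onabla)})_{p'}$ carries $\D^\C_p$ to $\D^\C_{p'}$, and this action is exactly the natural-bundle pushforward of derivations by $\varphi$. So the theorem reduces to the single identity $\varphi_*\D^\C=\D^\C$, to hold on the common domain for every $\varphi\in\Diffloc(P,\C,\pi)$.

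That identity is the content of Proposition~\ref{prop:PfaffianSymmetriesInduceAlgebroidSymmetries}: a Pfaffian symmetry $\varphi$ preserves both $\C$ and $\C^\pi$, hence descends to a vector bundle automorphism $\bar\varphi$ of $\pi^*TX\cong\C/\C^\pi$ over $\varphi$ which is compatible with the flat $\C^\pi$-connection $\onabla$, and, since $\D^\C$ is manufactured solely from the distribution $\C$ and the de~Rham differential $\d$ on $TX$ (Section~\ref{sec:PfaffianFibrationsAsRelativeAlgebroids}), the action of $\bar\varphi$ on flat forms commutes with $\D^\C$. Applying this along $\varphi$ gives $\varphi_*\D^\C=\D^\C$, and the theorem follows from the reduction above.

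The only point that needs genuine care is the identification, used silently in the first paragraph, of the \emph{abstract} canonical representation of $\GG_{\C,\pi}^{(1)}$ on $\DD^1_{(\pi^*TX,\onabla)}$ with the \emph{concrete} pushforward of derivations by Pfaffian symmetries, and in particular checking that the jet order is the right one. In coordinates on $P$ adapted to $\pi$ and to a local frame of $\C$, one writes $\D^\C=\d x^i\otimes D_i$, where the $D_i$ span a complement of $\C^\pi$ inside $\C$, together with the structure forms $\D^\C\d x^i\in\Omega^2_{\pi^*TX}$. Transforming the $D_i$ under $\varphi$ uses only $T\varphi$, but transforming the structure forms requires differentiating the transition matrix of $\bar\varphi$ along the $D_i$, which brings in \emph{second}-order jets of $\varphi$; this is precisely why it is the prolonged groupoid $\GG_{\C,\pi}^{(1)}\subseteq J^2\Diffloc(P)$ — and not $\GG_{\C,\pi}$ itself — that acts, in keeping with the principle that prolongation raises jet order by one. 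I expect this to be the main obstacle: the work is the bookkeeping needed to keep straight the three bundles in play ($\pi^*TX$, the distribution $\C$, and the jet coordinates on $\GG_{\C,\pi}^{(1)}$) and to confirm that restricting to arrows preserving $\C^\pi$ leaves no spurious higher-order terms, after which invariance of $\D^\C$ is immediate from Proposition~\ref{prop:PfaffianSymmetriesInduceAlgebroidSymmetries} together with the functoriality of the assignment $(P,\C,\pi)\rightsquigarrow(\pi^*TX,\onabla,\D^\C)$.
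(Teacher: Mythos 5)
Your overall strategy---reduce invariance of $\D^\C$ to the statement that Pfaffian symmetries preserve the underlying relative algebroid, i.e.\ to Proposition~\ref{prop:PfaffianSymmetriesInduceAlgebroidSymmetries}---is the right idea, and your observation that the action on derivations genuinely requires second-order jets (hence the prolonged groupoid $\GG_{\C,\pi}^{(1)}\subseteq J^2\Diffloc(P)$) is correct. But the reduction as you state it has a genuine gap: you assert that every arrow of $\GG_{\C,\pi}^{(1)}$ is ``by construction the $2$-jet $j^2_p\varphi$ of a local Pfaffian symmetry $\varphi\in\Diffloc(P,\C,\pi)$.'' This is false in general. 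The groupoid $\GG_{\C,\pi}$ is defined by a \emph{pointwise} condition on $T_x\varphi$ (preserving $\C_x$ and $\C^\pi_x$ at the single point $x$), and the paper explicitly notes in Section~\ref{sec:GroupoidSymmetriesOfPfaffianFibrations} that $J^1\Diffloc(P,\C,\pi)$ is in general \emph{strictly} contained in $\GG_{\C,\pi}$. Realizing a given element of $\GG_{\C,\pi}^{(1)}$ as the $2$-jet of an honest local symmetry would amount to solving the PDE cutting out $\GG_{\C,\pi}$ through a prescribed $2$-jet, which is an integrability statement that is neither assumed nor generally true. So the theorem cannot be deduced from the identity $\varphi_*\D^\C=\D^\C$ for genuine symmetries $\varphi$ alone; it is a strictly stronger, jet-level statement.

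The repair is to rerun the computations of Proposition~\ref{prop:PfaffianSymmetriesInduceAlgebroidSymmetries} and observe that they are pointwise in nature: Equation~(\ref{eq:ConnectionPreserved}) (preservation of $\onabla$, equivalently of the PDE of flat sections $J^1_\onabla\pi^*TX$) and Equation~(\ref{eq:BracketPreserved}) (preservation of the bracket $[\cdot,\cdot]^\C$) depend at a point $x$ only on $j^2_x\varphi$ and on the first jets of the sections $X,Y$ involved---this is exactly the formalism of Section~\ref{subsec:DerivationsAsMapsOnFirstJets}. Hence they hold verbatim for an arbitrary element of $\GG^{(1)}_{\C,\pi}=J^1\GG_{\C,\pi}\cap J^2\Diffloc(P)$, whether or not it integrates to an actual symmetry. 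The first of these facts makes $(\GG^{(1)}_{\C,\pi},\GG_{\C,\pi})\curvearrowright(\pi^*TX,\onabla)$ a representation on a flat foliated bundle, so that Proposition~\ref{prop:RepresentationsOfFoliatedVectorBundles} yields the canonical representation on $\DD^1_{(\pi^*TX,\onabla)}$; the second gives invariance of $\D^\C$. (This pointwise character is also why, as remarked after the theorem, the statement survives even when $\GG^{(1)}_{\C,\pi}$ fails to be smooth---something your reduction could not explain.)
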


\subsubsection*{Groupoid actions by Pfaffian symmetries} 
Often, a PDE comes with a specific pseudogroup symmetry given by the context (e.g. diffeomorphism-invariance). In the final part, Section \ref{sec:ActionsByPfaffianGroupoidsOnPfaffianFibrations}, we capture this by introducing a notion of an \textbf{action} of a Pfaffian groupoid on a Pfaffian fibration \textbf{by internal symmetries} and \textbf{by Pfaffian symmetries}. More precisely, a groupoid action 
\[
\xymatrix{
	(\GG, \HH) \ar@<0.5ex>[d] \ar@<-0.5ex>[d] & {(P, \C, \pi)} \ar@`{[l]+/l-0.2pc/+/d+1.8pc/,[l]+/l-0.2pc/+/u+1.8pc/}[0, 0]_{m} \ar[ld]^{\mu}    \\
	M & 
}
\]
of a Pfaffian groupoid $(\GG, \HH)\toto M$ on a Pfaffian fibration $(P, \C, \pi)$ is an \textbf{action by internal symmetries} when it satisfies
\[
	Tm\left( \HH \leftindex_{Ts}{\times}_{T\mu} \C \right) \subseteq \C,
\]
and an \textbf{action by Pfaffian symmetries} when in addition
\[
	Tm \left( \HH \leftindex_{Ts}{\times}_{T\mu} \C^\pi \right) \subseteq \C^\pi 
\]
is satisfied. These imply that the holonomic bisections of $(\GG, \HH)$ act by internal (resp. Pfaffian) symmetries of $(P, \C, \pi)$. 

As for the pseudogroups defined above, actions by internal symmetries and by Pfaffian symmetries are related through prolongation. However, if $(\GG, \HH)\curvearrowright (P, \C, \pi)$ is an action by internal symmetires, then it only induces a \textbf{partial action} (sometimes called a local action\footnote{For this article, we stick to the term \emph{partial action} and prefer to use local actions for actions of local Lie groupoids, which do not play a role in this article.}) on the prolongation. In Section \ref{sec:PartialActionsAndPfaffianGroupoids}, we briefly recall what a partial action of a groupoid is (see also \cite{AnantharamanDelaroche2020, MarinPinedoRodrigue2025}).

\begin{propositionM}[Corollary \ref{cor:ActionBySymmetriesAndProlongation}] Let $(\GG, \HH)\curvearrowright (P, \C, \pi)$ be an action by internal symmetries. Then:
	\begin{enumerate}
		\item The partial prolongation $J^1_\HH\GG$ has a natural \emph{partial action} by Pfaffian symmetries on the partial prolongation $J^1_\C P$.
		\item If $(\GG, \HH)$ is 2-integrable, and $(P, \C, \pi)$ is 1-integrable, then $\GG^{(1)}$ acts on $P^{(1)}$ by Pfaffian symmetries.
	\end{enumerate}
\end{propositionM}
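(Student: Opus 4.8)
The plan is to lift the pseudogroup statement of Theorem~\ref{thm:InternalAndPfaffianSymmetries} to the groupoid level and repackage it as a partial action, keeping careful track of domains.

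For (1) I would begin from the fact recorded above that an action $(\GG,\HH)\curvearrowright(P,\C,\pi)$ by internal symmetries makes every local holonomic bisection $b$ of $(\GG,\HH)$ act as an internal symmetry $\varphi_b\in\Diffloc(P,\C)$, with domain $\mu^{-1}(\dom b)$, the assignment $b\mapsto\varphi_b$ being multiplicative, sending the unit bisection to $\id_P$, and---since $\varphi_b(q)=m(b(\mu(q)),q)$---having the property that the $1$-jet of $\varphi_b$ along $\mu^{-1}(x)$ depends only on $j^1_x b$. By Theorem~\ref{thm:InternalAndPfaffianSymmetries} each $\varphi_b$ prolongs to a Pfaffian symmetry $\varphi^{(1)}_b\in\Diffloc(P^{(1)},\C^1,\pi^{(1)})$ of the prolongation, defined on the open set of $j^1_y\sigma\in J^1_\C P$ for which $\varphi_b\circ\sigma$ remains the germ of a holonomic section; likewise $b$ prolongs to a holonomic bisection $j^1 b$ of $(\GG^{(1)},\HH^1)$, and, by the jet-dependence just noted, $j^1_x b\in J^1_\HH\GG$ already determines $\varphi^{(1)}_b$ over the fibre of $J^1_\C P$ above $\mu^{-1}(x)$. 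I would therefore set
\[
 (j^1_x b)\cdot\xi:=\varphi^{(1)}_b(\xi),\qquad \xi\in J^1_\C P,\ \ \mu(\pr\xi)=x,\ \ \xi\in\dom\varphi^{(1)}_b,
\]
along the moment map $\mu^{(1)}\colon J^1_\C P\xrightarrow{\ \pr\ }P\xrightarrow{\ \mu\ }M$. Independence of the representative $b$ is the jet-dependence above; the open domains $\dom(j^1_x b)=\dom\varphi^{(1)}_b$, multiplicativity of $b\mapsto\varphi_b$, and functoriality of prolongation on $\Diffloc$ then yield exactly the axioms of a partial action (Section~\ref{sec:PartialActionsAndPfaffianGroupoids}); and the action is by Pfaffian symmetries since each $\varphi^{(1)}_b$ is one, which at the level of the action map amounts to the inclusions $Tm^{(1)}\big(\HH^1\leftindex_{Ts^{(1)}}{\times}_{T\mu^{(1)}}\C^1\big)\subseteq\C^1$ and $Tm^{(1)}\big(\HH^1\leftindex_{Ts^{(1)}}{\times}_{T\mu^{(1)}}(\C^1)^{\pi^{(1)}}\big)\subseteq(\C^1)^{\pi^{(1)}}$ obtained by prolonging those of the original action.

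For (2), the integrability hypotheses serve only to upgrade the ambient spaces: $1$-integrability of $(P,\C,\pi)$ makes $J^1_\C P=P^{(1)}$ an honest Pfaffian fibration $(P^{(1)},\C^1,\pi^{(1)})$, while $2$-integrability of $(\GG,\HH)$ makes $J^1_\HH\GG=\GG^{(1)}$ an honest Pfaffian groupoid $(\GG^{(1)},\HH^1)$ which is moreover $1$-integrable---the extra order being precisely what guarantees that $\GG^{(1)}$ has enough holonomic bisections for the partial action constructed in (1) to count as an action by Pfaffian symmetries (and not merely to act so on holonomic bisections). This asymmetry between the two hypotheses is natural: $P^{(1)}$ merely has to be a good Pfaffian fibration to be acted upon, whereas $\GG^{(1)}$ has to be a good Pfaffian groupoid with enough holonomic bisections to do the acting. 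With the spaces now smooth, the construction of (1) becomes a (partial) action of the Pfaffian groupoid $\GG^{(1)}$ on the Pfaffian fibration $P^{(1)}$ by Pfaffian symmetries, which is the assertion of (2).

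The step I expect to be the main obstacle is the bookkeeping of partiality: checking that $\varphi^{(1)}_b$ depends on $b$ only through $j^1_x b$ (so the assignment descends from bisections to $J^1_\HH\GG$), that the open domains $\dom(j^1_x b)$ behave correctly under multiplication and inversion, and that the prolongation operation---which reparametrizes a holonomic section over its base---is compatible with the groupoid multiplication on $J^1_\HH\GG$. It is the non-$\pi$-fiberedness of internal symmetries---an internal symmetry may send a holonomic section to a submanifold that develops a tangency to $\ker T\pi$---that makes the prolonged action genuinely partial rather than global, and this is the feature that has to be controlled throughout.
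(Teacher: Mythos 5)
Your strategy for (1)---prolong the action pointwise, using that $\varphi^{(1)}_b$ over the fibre of $x$ depends only on $j^1_xb$---is essentially the paper's, which packages the same computation as a morphism of Pfaffian groupoids $(J^1_\HH\GG)\pltimes P\to\GG_\C$ followed by the canonical partial action of $\GG_\C$ on $J^1_\C P$. Two caveats. First, not every element of $J^1_\HH\GG$ is the $1$-jet of a \emph{holonomic} bisection (that would already be a solvability statement for the PDE $\HH$), so you cannot literally invoke Theorem \ref{thm:InternalAndPfaffianSymmetries} for $\varphi_b$; you must take $b$ tangent to $\HH$ only at the point $x$ and rerun the argument of that theorem pointwise, observing that it uses only the first jet of the symmetry---this is exactly the remark made in the proof of Proposition \ref{prop:InternalGroupoidProlongation}. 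Second, ``action by Pfaffian symmetries'' is the distributional condition $Tm^{(1)}\left(\HH^1\leftindex_{Ts}{\times}_{T\mu^{(1)}}\C^1\right)\subseteq\C^1$ (and likewise for $(\C^1)^{\pi^{(1)}}$) on the full action map, which is strictly stronger than saying each holonomic bisection acts by a Pfaffian symmetry; it needs its own short verification (the paper gets it from Lemma \ref{lem:MorphismIntoGroupoidOfPfaffianSymmetries} together with Lemma \ref{lem:ActionByPfaffianSymmetriesAndMorphisms}).

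The genuine gap is in (2). You assert that $1$-integrability of $(P,\C,\pi)$ makes $J^1_\C P=P^{(1)}$ and that $2$-integrability of $(\GG,\HH)$ makes $J^1_\HH\GG=\GG^{(1)}$. Both are false: $P^{(1)}\subseteq J^1_\C P$ and $\GG^{(1)}\subseteq J^1_\HH\GG$ are cut out by the additional curvature equations $h^*\kappa_\C=0$, resp.\ $h^*\kappa_\HH=0$, and integrability only says these loci are smooth affine sub-bundles, not that they exhaust the partial prolongations. As a result your argument for (2) never touches the actual content of the statement, namely that the partial action of (1), restricted to $\GG^{(1)}$, \emph{preserves the curvature locus} $P^{(1)}$. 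This is precisely where $2$-integrability enters: $\kappa_\C$ transforms equivariantly only under \emph{second-order} jets of symmetries (elements of $\GG_\C^{(1)}=J^1\GG_\C\cap J^2\Diffloc(P)$, cf.\ Proposition \ref{prop:InternalGroupoidProlongation}(2)), so to see that an element of $\GG^{(1)}$ preserves $P^{(1)}$ one must lift it to $\GG^{(2)}$---possible exactly because $2$-integrability makes $\GG^{(2)}\to\GG^{(1)}$ a surjective submersion---and push that lift into $\GG_\C^{(1)}$, whose projection $\overline{\GG_\C}$ is what preserves $P^{(1)}$. Your explanation of the hypothesis (``enough holonomic bisections'') is not the right mechanism; $2$-integrability produces no holonomic bisections. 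This curvature-preservation step is missing and is the heart of part (2).
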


Finally, if $(\GG, \HH)\toto M$ is sufficiently regular, we show that a Pfaffian action it leaves the derivation of the Pfaffian fibration invariant.

\begin{theoremM}[Theorem \ref{thm:ActionsByPfaffianSymmetriesInvariance}] Let $(\GG, \HH)\curvearrowright (P, \C, \pi)$ be an action by Pfaffian symmetries along a map $\mu\: P\to M$. Suppose that $(\GG, \HH)$ is 2-integrable. Then:
	\begin{enumerate}
		\item There is a natural representation of flat foliated bundles (Definition \ref{def:RepresentationOnFlatFoliatedVectorBundle}) $(\GG^{(2)}, \GG^{(1)}) \curvearrowright \pi^*TX$.
		\item The derivation $\D^\C\in \Gamma(\DD^1_{(\pi^*TX, \onabla)})$ is invariant for the natural representation $\GG^{(2)} \curvearrowright \DD^1_{(\pi^*TX, \onabla)}$. 
	\end{enumerate}
\end{theoremM}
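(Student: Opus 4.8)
\emph{Plan.} I would derive both parts by restriction from the ``full'' symmetry result, Theorem~\ref{thm:PfaffianDerivationInvariant}: an action by Pfaffian symmetries encodes a morphism of Pfaffian groupoids into $(\GG_{\C,\pi},\HH^1)$, and the canonical representation of $\GG_{\C,\pi}^{(1)}$ on $\DD^1_{(\pi^*TX,\onabla)}$, together with its invariant section $\D^\C$, can then be pulled back along the prolongations of this morphism. The real content is the construction in part (1); granting it, part (2) is essentially formal, since $\D^\C$ is built only from $\C$ and the de Rham differential on $TX$, both preserved by the action.

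\emph{Step 1: linearising the action.} First I would record the canonical identification $\pi^*TX\cong\C/\C^\pi$ given by $T\pi|_\C$ (surjective with kernel $\C^\pi$ by transversality of $\C$ to $\ker T\pi$); under it $\onabla$ is the tautological flat $\C^\pi$-connection, as $\pi$ is constant along $\C^\pi$-leaves. By Proposition~\ref{prop:PfaffianSymmetriesInduceAlgebroidSymmetries}, a Pfaffian symmetry $\varphi$ of $(P,\C,\pi)$ induces a bundle automorphism of $\C/\C^\pi\cong\pi^*TX$ intertwining $\onabla$, and this automorphism at a point $p$ depends only on $j^1_p\varphi$. A point $\xi\in\GG^{(1)}$ over an arrow $g$ with $s(g)=\mu(p)$ is, using $1$-integrability, the $1$-jet of a holonomic bisection $b$ of $(\GG,\HH)$ through $g$; by hypothesis $b$ acts on $(P,\C,\pi)$ by a Pfaffian symmetry $\varphi_b$, and the induced automorphism of $\pi^*TX$ at $p$ depends only on $\xi=j^1_{\mu(p)}b$. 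Using the inclusions $Tm(\HH\times_M\C)\subseteq\C$, $Tm(\HH\times_M\C^\pi)\subseteq\C^\pi$ and multiplicativity of $\HH$, one checks independence of the choice of $b$ and multiplicativity in $\xi$, obtaining a linear action of $\GG^{(1)}$ on $\pi^*TX$ covering the $\GG$-action on $P$. Its compatibility with $(\C^\pi,\onabla)$, which is the content of Definition~\ref{def:RepresentationOnFlatFoliatedVectorBundle}, is a first-order condition in the arrow direction; differentiating once more yields the companion $\GG^{(2)}$-action, and hence the pair $(\GG^{(2)},\GG^{(1)})\curvearrowright\pi^*TX$ of part (1). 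Two-integrability is used to make $\GG^{(1)},\GG^{(2)}$ into well-behaved Pfaffian groupoids with enough holonomic bisections, so the construction is defined everywhere rather than partially (contrast Corollary~\ref{cor:ActionBySymmetriesAndProlongation}).

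\emph{Step 2: the morphism and invariance.} The assignment $\xi\mapsto j^1_p\varphi_b$ defines a morphism of Pfaffian groupoids $\Phi$ from the action groupoid $\GG^{(1)}\ltimes P$ to $(\GG_{\C,\pi},\HH^1)$ over $P$ (the Pfaffian-symmetry inclusions give $j^1_p\varphi_b\in\GG_{\C,\pi}$; multiplicativity of $\HH,\HH^1$ makes $\Phi$ intertwine the Cartan distributions). Prolonging $\Phi$ once, using $2$-integrability to control the prolongations, yields a morphism whose source is built from $\GG^{(2)}$ and $P^{(1)}$ and whose target is $\GG_{\C,\pi}^{(1)}$, intertwining the canonical action $\GG_{\C,\pi}^{(1)}\curvearrowright\DD^1_{(\pi^*TX,\onabla)}$ of Theorem~\ref{thm:PfaffianDerivationInvariant} with an action of $\GG^{(2)}$ on the same bundle; this is the representation of part (2). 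As $\D^\C$ is $\GG_{\C,\pi}^{(1)}$-invariant and the $\GG^{(2)}$-action is its pullback along $\Phi$, the $\GG^{(2)}$-invariance of $\D^\C$ is immediate; equivalently, each $\varphi_b$ satisfies $(\varphi_b)_*\D^\C=\D^{(\varphi_b)_*\C}=\D^\C$ because $\varphi_b$ preserves $\C$, and this passes to the $\GG^{(2)}$-action by construction.

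\emph{Main obstacle.} I expect the crux to be, first, the jet-order bookkeeping: verifying that the once-prolonged morphism naturally carries $\GG^{(2)}$ (not $\GG^{(1)}$) on the source and $\GG_{\C,\pi}^{(1)}$ on the target acting on $\DD^1_{(\pi^*TX,\onabla)}\to P$, and that $\Phi$ and its prolongation are honest morphisms of Pfaffian groupoids compatible with every Cartan distribution in sight --- this means keeping careful track of which derivatives of $m$, $b$ and $\varphi_b$ enter at each level, and of the various fibered products. Second, and more conceptual, checking that $2$-integrability is exactly what upgrades the \emph{partial} action obtained from prolonging an internal action (Corollary~\ref{cor:ActionBySymmetriesAndProlongation}) to a genuine action of $\GG^{(2)}$, so that both parts of the statement are meaningful over all of $\GG^{(2)}$. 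The remaining verifications should reduce to manipulations with Cartan distributions and multiplicativity already developed in Sections~\ref{sec:SymmetriesOfPfaffianFibrations} and~\ref{sec:ActionsByPfaffianGroupoidsOnPfaffianFibrations}.
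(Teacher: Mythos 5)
Your overall route is the same as the paper's: build a morphism of prolongation pairs from $(\GG^{(2)}\ltimes P,\ \GG^{(1)}\ltimes P)$ into $(\GG_{\C,\pi}^{(1)},\ \GG_{\C,\pi})$ and pull back the canonical representation and the invariance of $\D^\C$ from Theorem \ref{thm:PfaffianDerivationInvariant}. The paper gets this morphism formally, by differentiating the groupoid morphism $(m,\pr_P)\:\GG\pltimes P\to P\times P$ to $m_*\:J^1(\GG\pltimes P)\to J^1\Diffloc(P)$ and restricting (Lemmas \ref{lem:MorphismIntoGroupoidOfPfaffianSymmetries}, \ref{lem:MorphismsOfPfaffianGroupoids}, \ref{lem:ActionPfaffianGroupoid}), which makes Step 2 a two-line composition of already-proved lemmas.

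There is, however, a genuine gap in your Step 1. You assert that, by $1$-integrability, a point $\xi\in\GG^{(1)}$ is the $1$-jet of a \emph{holonomic} bisection $b$ of $(\GG,\HH)$, and you then act by the Pfaffian symmetry $\varphi_b$. This is false in general: an element of $\GG^{(1)}$ (or even of $J^1_\HH\GG$) is the $1$-jet of a bisection that is tangent to $\HH$ \emph{at the single point} $j^1$ is taken at; producing a bisection that is holonomic on a neighbourhood is the existence problem for solutions of the Pfaffian groupoid, which neither $1$- nor $2$-integrability guarantees (that is precisely why the paper's groupoid statements are phrased in terms of jets, cf.\ the remark after Theorem \ref{thm:PfaffianDerivationInvariant} that everything is ``pointwise in nature''). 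The repair is to drop holonomic bisections entirely: define the action of $\xi=j^1_x\sigma$ on $(\pi^*TX)_p$ through $T_pL_\sigma$ for an \emph{arbitrary} local bisection $\sigma$ with the prescribed $1$-jet, and check that the pointwise tangency conditions $Tm(\HH\,\leftindex_{Ts}{\times}_{T\mu}\,\C)\subseteq\C$ and $Tm(\HH\,\leftindex_{Ts}{\times}_{T\mu}\,\C^\pi)\subseteq\C^\pi$ already force $j^1_pL_\sigma\in\GG_{\C,\pi}$ and independence of the choice of $\sigma$ --- this is exactly what the paper's map $m_*\:J^1_{\pr_\GG^*\HH}(\GG\pltimes P)\to\GG_{\C,\pi}$ packages. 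A second, minor, slip: the prolonged morphism has source $\GG^{(2)}\ltimes P$ over the base $P$ (where $\DD^1_{(\pi^*TX,\onabla)}$ lives); $P^{(1)}$ does not enter this theorem, only the corollaries about the prolonged fibration.
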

\begin{corollaryM}[Corollary \ref{cor:DerivationPreservesInvariantSections}]
	If $(\GG, \HH)\curvearrowright (P, \C, \pi)$ is an action by Pfaffian symmetries and $\GG$ is 2-integrable, then the derivation $\D^\C$ preserves $\GG^{(1)}$-invariant sections:
	\[
		\D^\C\: \left(\Omega^1_{(\pi^*TX, \onabla)}\right)^{\GG^{(1)}}\to \left(\Omega^1_{\pi^*TX}\right)^{\GG^{(1)}}.
	\]
\end{corollaryM}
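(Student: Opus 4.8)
The plan is to read this off from Theorem~\ref{thm:ActionsByPfaffianSymmetriesInvariance}, whose two conclusions supply exactly what is needed: the representation of flat foliated bundles $(\GG^{(2)},\GG^{(1)})\curvearrowright\pi^*TX$, and the invariance of $\D^\C\in\Gamma(\DD^1_{(\pi^*TX,\onabla)})$ under the canonical representation $\GG^{(2)}\curvearrowright\DD^1_{(\pi^*TX,\onabla)}$. All that remains is to translate these two facts into a statement about the action on forms, which is purely formal.

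First I would unwind the representations induced on the associated bundles. From the representation on the flat foliated bundle $\pi^*TX$ one obtains, by pullback along the action, a representation of the ``large'' groupoid $\GG^{(2)}$ on $\wedge^\bullet(\pi^*TX)^*$, hence on the sheaf $\Omega^\bullet_{\pi^*TX}$; this action factors through the projection $\GG^{(2)}\to\GG^{(1)}$, so it is the $\GG^{(1)}$-action appearing in the statement. Because the defining property of a representation of flat foliated bundles (Definition~\ref{def:RepresentationOnFlatFoliatedVectorBundle}) is that the ``small'' groupoid $\GG^{(1)}$ preserves the flat $\onabla$-structure, this restricts to a representation of $\GG^{(1)}$ on the subsheaf $\Omega^\bullet_{(\pi^*TX,\onabla)}$ of flat forms, compatibly with the inclusion $\Omega^\bullet_{(\pi^*TX,\onabla)}\hookrightarrow\Omega^\bullet_{\pi^*TX}$. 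Finally, I would recall that the canonical representation $\GG^{(2)}\curvearrowright\DD^1_{(\pi^*TX,\onabla)}$ used in Theorems~\ref{thm:PfaffianDerivationInvariant} and~\ref{thm:ActionsByPfaffianSymmetriesInvariance} is, by construction, the one making the evaluation pairing $\DD^1_{(\pi^*TX,\onabla)}\otimes\Omega^\bullet_{(\pi^*TX,\onabla)}\to\Omega^{\bullet+1}_{\pi^*TX}$ equivariant (it is defined by conjugating a derivation with the actions on source and target forms).

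With the definitions aligned the conclusion is a one-line chase. Let $\omega$ be a $\GG^{(1)}$-invariant flat form and let $g\in\GG^{(2)}$, acting on flat forms through its image $\bar g\in\GG^{(1)}$. Equivariance of the evaluation pairing together with the invariance of $\D^\C$ gives $g\cdot(\D^\C\omega)=\D^\C(g\cdot\omega)=\D^\C(\bar g\cdot\omega)$, and invariance of $\omega$ gives $\bar g\cdot\omega=\omega$, so $g\cdot(\D^\C\omega)=\D^\C\omega$ for every $g\in\GG^{(2)}$. Since the $\GG^{(2)}$-action on $\Omega^\bullet_{\pi^*TX}$ factors through $\GG^{(1)}$, this says precisely that $\D^\C\omega$ is $\GG^{(1)}$-invariant; hence $\D^\C$ restricts to the claimed map on $\GG^{(1)}$-invariant sections.

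Since everything follows formally once Theorem~\ref{thm:ActionsByPfaffianSymmetriesInvariance} is in hand, there is no serious obstacle here; the only point requiring care is the level bookkeeping — checking that the $\GG^{(1)}$-action on honest forms in the statement is exactly the factorization of the $\GG^{(2)}$-action produced by the theorem, and that ``invariant section of $\DD^1_{(\pi^*TX,\onabla)}$'' is used with the convention coming from the form pairing rather than some other identification. Once these conventions are matched, no computation is involved.
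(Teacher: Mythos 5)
Your argument is correct and is exactly the route the paper intends: the corollary is stated without proof as an immediate formal consequence of Theorem \ref{thm:ActionsByPfaffianSymmetriesInvariance}, via precisely the conjugation/evaluation-pairing chase you describe (invariance of $\D^\C$ as a section of $\DD^1_{(\pi^*TX,\onabla)}$ under $\GG^{(2)}$, together with the fact that $j^1\omega$ is $\GG^{(2)}$-invariant whenever $\omega$ is $\GG^{(1)}$-invariant). The only convention to watch is that the chase literally yields invariance of $\D^\C\omega$ under $p_1(\GG^{(2)})=\overline{\GG^{(1)}}$, which coincides with $\GG^{(1)}$ under the surjectivity implicit in the paper's notion of 2-integrability (a distinction the paper itself flags in the proof of Theorem \ref{thm:ActionsByPfaffianSymmetriesInvariance}).
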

Often, the action of $\GG^{(1)}$ on $\pi^*TX$ descends to an action of $\GG$ itself on $\pi^*TX$. This happens for instance when the action is the prolongation of an action by internal symmetries. In that case, the corollary implies that the derivation preserves $\GG$-invariant sections, so it descends to the quotient! 

\subsubsection*{PDEs with symmetries and Pfaffian actions} In the last part, Section \ref{sec:RemarksOnPfaffianActionsAndPDEsWithSymmetries}, we demonstrate that the notion of a Pfaffian action, developed in \cite{AccorneroCattafi2022, AccorneroCattafiCrainicSalazarBook, Cattafi2020}, is not sufficient to capture PDEs with symmetries. Our example comes from the point symmetries of the first jet space $J^1q$ of a submersion $q\: Q\to X$. It gives rise to the following action
\[
\xymatrix{
	(J^1\Diffloc(q), \omega_Q) \ar@<0.5ex>[d] \ar@<-0.5ex>[d] & {(J^1q, \theta_Q, \pi)} \ar@`{[l]+/l-2.2pc/+/d+1.8pc/,[l]+/l-2.2pc/+/u+1.8pc/}[0, 0]_{m} \ar[ld]^{p_1}    \\
	Q,& 
}
\]
where $\Diffloc(q)$ is the pseudogroup of diffeomorphisms preserving $\ker Tq$, and $\omega_Q$ and $\theta_Q$ are the Cartan forms on $J^1\Diffloc(q)$ and $J^1q$, respectively. 

There is a groupoid map $\overline{q}\: J^1\Diffloc(q)\to J^1\Diffloc(X)$, and the action satisfies the following ``twisted Pfaffian" equation:
\[
	\pr^*_\GG\omega_Q - (g\cdot h) \circ \left(\overline{q}^*\omega_X\right) = m^*\theta_Q - g \cdot \left(\pr_{J^1q}^*\theta_Q\right),
\]
which shows that the action is not a Pfaffian action in the sense of \cite{AccorneroCattafiCrainicSalazarBook, Cattafi2020}. Instead, it is an action by Pfaffian symmetries. 
\subsubsection*{Appendices}
Appendix \ref{sec:RelativeAlgebroids} gives a summary of the main notions around relative derivations and relative algebroids, and states specific results used in the main part of the article. Some parts are new, such as the notions of a groupoid representation on flat foliated bundles and on relative algebroids (Sections \ref{sec:RepresentationOnFlatFoliatedBundles} and \ref{sec:RepresentationOnRelativeAlgebroids}).

Appendix \ref{app:TableauMapsAndInvolutivity} discusses involutivity, Cartan characters, and regular flags for tableau maps (introduced as generalized tableau in \cite{Salazar2013}), providing a technical result necessary for the proof of Theorem \ref{thm:InternalAndPfaffianSymmetries}.

\subsection*{Applications} A fruitful direction of applications will come from the study of the quotient relative algebroid of a PDE by symmetries. Let us first briefly mention how PDEs with pseudogroup symmetries fit into the framework of this paper.

For any pseudogroup $\Gamma$ (of diffeomorphisms) on a manifold $M$, the $k$-jets form a groupoid $J^k\Gamma$ whose multiplication is induced from composition of diffeomorphisms. If $J^k\Gamma\toto M$ is smooth, it becomes a Pfaffian groupoid $(J^k\Gamma, \HH^k)$ with the Cartan distribution $\HH^k$ inherited from $J^k\Diffloc(M)$. 

A PDE with symmetries $(E, \Gamma)$ is a PDE $E\subset J^k q$ on section of a submersion $q\: Q\to X$ together with a pseudogroup $\Gamma$ on $E$ that preserves the Cartan distribution. Interpreting $E$ as a Pfaffian fibration, $\Gamma$ is a pseudogroup of \emph{internal symmetries}. In this case there are partial actions $J^k \Gamma \hat{\curvearrowright} E^{(k)}$ on the prolongation spaces of $E$. Such actions are instances of \emph{actions by Pfaffian symmetries}. PDEs with symmetries have been extensively studied in the literature \cite{AndersonKamranOlver1993, AndersonIbragimov1979, KruglikovLychagin2008, KruglikovLychagin2016, OlverPohjanpelto2008, OlverPohjanpelto2009, Vinogradov2001}.

A huge advantage of the framework over relative algebroids is that it is stable under quotients. The current article sets it up by showing that the derivation of the relative algebroid underlying a Pfaffian fibration is invariant under symmetries. When quotient spaces exist, it descends to the structure of a relative algebroid on the quotient.

One instance of this phenomenon was already presented in \cite[Section 4.5]{Smilde2025Notes} to construct a relative algebroid on the space of $k$-jets of $n$-dimensional submanifolds $J^k(M, n)$ of a manifold $M$, and to construct the relative algebroid associated to a relative distribution. 

In a subsequent paper, we will study this quotient construction in detail and apply it to PDEs with symmetries. For instance, through a quotient construction, there should be a direct relationships between diffeomorphsim-invariant PDEs on Riemannian metrics and the relative algebroids found through Bryant's equations in \cite{Bryant2014}. 

The quotient of a PDE by symmetries lives over the space of invariants, and we expect there to be a close relationship between the quotient relative algebroid and the Lie-Tresse theorem \cite{KruglikovLychagin2016}.

A different direction is to establish the relationship between relative algebroids the moving-frame method for Lie pseudogroups developed by Olver and Pohjanpelto \cite{OlverPohjanpelto2009}. 

\subsection*{Acknowledgments} This article was written to properly answer a question raised by Luca Vitagliano following a talk by the author during the Mini-Workshop on Groupoids, Algebroids \& Differentiable Stacks, which took place at the University of Napoli Federico II, June 11–14, 2024.

The author especially thanks Luca Accornero for pointing out that actions by Pfaffian symmetries should be treated from the distributional sense (Remark \ref{rk:LucasRemark}), a foundational remark that inspired much of the content of this article. 

Additionally, the author thanks Luca Accornero, Francesco Cattafi, Marius Crainic and Maria Amelia Salazar for their answers to many questions about the framework of Pfaffian fibrations and Pfaffian actions, which will appear in full detail in their upcoming book \cite{AccorneroCattafiCrainicSalazarBook}. Access to an early draft has been particularly helpful.

\section{Pfaffian fibrations}\label{sec:PfaffianFibrations}
In this section, we establish the direct connection between Pfaffian fibrations and relative algebroids. A rapid introduction to relative algebroids has been included in Appendix \ref{sec:RelativeAlgebroids}, but for more details and examples we refer to \cite{FernandesSmilde2025, Smilde2025Notes}.
 
\subsection{Pfaffian fibrations} We start by summarizing the main aspects of the theory on Pfaffian fibrations---for a more detailed introduction that includes motivation and examples, see \cite{Cattafi2020, CattafiCrainicSalazar2020, Salazar2013}.

\begin{definition}[\cite{Salazar2013}]\label{def:PfaffianFibration}
    A \textbf{Pfaffian fibration} $(P, \C, \pi)$ consists of a submersion $\pi\: P\to X$ together with a distribution $\C\subset TP$ satisfying
    \begin{itemize}
        \item (transversality) $\C + \ker T\pi = TP$;
        \item ($\pi$-involutivity) $\C^\pi:= \C \cap \ker T\pi$ is an involutive distribution on $P$. 
    \end{itemize}
    A local section $\sigma$ of $\pi$ that is \textit{tangent to $\C$}, i.e. $\im T\sigma\subset \C$, is called \textbf{holonomic}.
\end{definition}

\begin{remark}
    It is often customary to work with a \textbf{Pfaffian form}, i.e. a one-form $\theta\in \Omega^1(P; E)$ with coefficients in a vector bundle $E\to P$ such that $\ker \theta = \C$ is a Pfaffian distribution. Any Pfaffian fibration can be given a Pfaffian form by setting $\theta_\C\: TP\to TP/\C$ to be the projection map. In that case, a local section $\sigma\in \Gamma_\pi$ is holonomic precisely when $\sigma^*\theta_\C = 0$. 
\end{remark}

\begin{example}[Cartan distribution on jet spaces and PDEs]\label{ex:CartanForm} 
    Let $q\: Q\to X$ be a submersion, and let $p_k\: J^kq \to J^{k-1}q$ and $q^{(k)}:J^k q\to M$ be the projections. The \textbf{Cartan distribution} $\C_k\in TJ^k q$ consists of those tangent vector $X\in T_{j^k_x\sigma} J^k q$ that are aligned image of $T_x (j^{k-1}\sigma)$. More precisely
    \begin{equation}\label{eq:CartanDistribution}
    	\C^k = \left\{ X\in \C_{j^k_x\sigma} \ | \ T_{j^k_x\sigma}p_k (X) = T_x(j^{k-1}\sigma)\left(T_{j^k_x\sigma} q^{(k)}(X)\right) \right\}.
    \end{equation}
    Alternatively, it is the kernel of the \textbf{Cartan form} $\theta \in \Omega^1(J^kq; p^*_k \ker Tq^{(k-1)})$ defined by
    \[
        \theta_{j^k_x \sigma}(X) = T_{j^k_x\sigma}p_k (X) - T_x(j^{k-1}\sigma)\left(T_{j^k_x\sigma} q^{(k)}(X)\right).
    \]
    Note that this is well-defined because $T_x(j^{k-1}\sigma)$ only depends on $j^k_x\sigma$. 
    
    The Cartan form detects whether a local section of $q^{(k)}$ is holonomic, namely, a local section $\tau$ of $q^{(k)}\: J^kq\to X$ is holonomic if and only if $\tau^*\theta = 0$. For the Cartan distribution, the holonomic sections are precisely those of the form $j^k\sigma$ for some section $\sigma$ of $q$.

   	 A \textbf{partial differential equation (PDE)} is a subbundle $\mathcal{E}\subset J^kq \to X$. Restricting the Cartan distribution to $\mathcal{E}$ makes any PDE into a Pfaffian fibration $(\mathcal{E}, \C^k, q^{(k)})$. Solutions to the PDE correspond to holonomic sections of this Pfaffian fibration.
\end{example}

Let $(P, \C, \pi)$ be a Pfaffian fibration. The \textbf{curvature map} of a Pfaffian fibration $(P, \C, \pi)$ is given by
\[
\kappa_\C\: \C \times \C \to TP/\C, \quad \kappa_\C(X, Y) = [\tilde{X}, \tilde{Y}]\mod \C.
\]
where $\tilde{X}, \tilde{Y}\in \Gamma(\C)$ are any sections through $X, Y\in \C$.

The \textbf{tableau map} or \textbf{symbol map} of a Pfaffian fibration $(P, \C, \pi)$ is
\[
	\tau_\C \: \C^\pi \to \Hom(\pi^*TX, TP/\C), \quad \tau_\C (v)(X) = \kappa_\C(v, \tilde{X})
\]
for $v\in \C^\pi$, $X\in TX$ and $\tilde{X}\in \C$ such that $T\pi(\tilde{X}) = X$. 

Recall that $J^1\pi$ can be identified with the bundle of (pointwise) splittings of $T\pi$:
\[
J^1\pi = \{ h\: T_{\pi(x)} X\to T_xP \  | \  T_x\pi \circ h = \id \}
\]
Any holonomic section $\sigma\: X\to P$ satisfies $\sigma^*\theta_\C = 0$ and consequently $\sigma^*\kappa_\C = 0$. The prolongation of a Pfaffian fibration takes into account these two differential consequences. 

\begin{definition}
    The \textbf{partial prolongation} of a Pfaffian fibration $(P, \C, \pi)$ consists of those splittings that are tangent to $\C$:
    \[
    	J^1_\C P = \{ h\in J^1\pi \ | \ h^*\theta_\C =0\}.
    \]
    The \textbf{prolongation} of $(P, \C, \pi)$ are those splittings that are tangent to $\C$ up to first order:
    \[
    	P^{(1)} = \{ h\in J^1\pi \ | \ h^*\theta_\C =0, \ h^*\kappa_\C = 0\}.
    \]
\end{definition}
The partial prolongation is always smooth and defines an affine bundle $p_1\: J^1_\C P\to P$ modeled on $\Hom(\pi^*TX, \C^\pi)$. The prolongation, however, may not be smooth. When it does, and $p_1\: P^{(1)}\to P$ is a submersion, we call $(P, \C, \pi)$ \textbf{1-integrable}. In that case, $p_1\: P^{(1)}\to P$ is an affine bundle modeled on
\[
	\tau_\C^{(1)} = \{ \xi\in \Hom(\pi^*TX, \C^\pi) \ | \ \delta_\tau(\xi) = 0\}
\]
where $\delta_\tau(\xi) (X, Y) = \tau(\xi(X))(Y) - \tau(\xi(Y))(X)$ for $X, Y\in \pi^*TX$.

Note that both the partial prolongation and the prolongations are PDEs, as they are subspaces of $J^1\pi$. This way, we can regard the (partial) prolongation as a new Pfaffian fibration $(P^{(1)}, \C^{1}, \pi^{(1)})$ (or $(J^1_\C P, \C^{1}, \pi^{(1)})$) with the Cartan distribution of a PDE (Example \ref{ex:CartanForm}).
    
The higher prolongations of $(P, \C, \pi)$ are defined iteratively:
\[
	(P, \C, \pi)^{(k)} := (P^{(k)}, \C^k, \pi^{(k)}) = (P^{(k-1)}, \C^{k-1}, \pi^{(k-1)})^{(1)}.
\]
When the first $k$ prolongations exist, then the Pfaffian fibration is called \textbf{$k$-integrable}. If it is $k$-integrable for all $k$, then it is \textbf{formally integrable.}

\begin{remark}\label{rk:reducedFibrationIntrinsicTorsion}
	It can happen that the projection $p_1\:P^{(1)}\to P$ is not surjective. For example, for a PDE this happens when the first order consequences have zeroth order implications. We denote the image by $\overline{P} = p_1(P^{(1)})$ and call it the \textbf{reduced Pfaffian fibration}. It coincides with the locus where the \textit{intrinsic torsion} of the Pfaffian fibration vanishes. We refer to \cite{AccorneroCattafiCrainicSalazarBook, CattafiCrainicSalazar2020} for what this means.
	
	If $P^{(1)}$ and $\overline{P}$ are manifolds, then $(\overline{P}, \C,\pi)$ is 1-integrable, even though $(P, \C, \pi)$ may not be.
\end{remark}

\subsection{Pfaffian fibrations as relative algebroids}\label{sec:PfaffianFibrationsAsRelativeAlgebroids}

Next we explain how a Pfaffian fibrations gives rise to a relative algebroid. We start by explaining how the various components arise from the Pfaffian fibration. The two essential distributions of a Pfaffian fibration $(P, \C, \pi)$ are the Cartan distribution $\C$ and the foliation $\C^\pi = \C \cap \ker T\pi$. The transversality condition implies that there is a canonical splitting
\[
    \nu(\C^\pi) \cong \C/\C^\pi \oplus \ker T\pi /\C^\pi
\]
for which the projection $\overline{T\pi}\: \nu(\C^\pi)\to \pi^*TX$ induces an isomorphism $\C/\C^\pi \cong \pi^*TX$. We denote the projections and inclusions associated to this splitting by the following:
\begin{align*}
    \Pi:= \overline{T\pi}&\: \nu(\C^\pi) \to \pi^*TX, & I&\: \pi^*T^*X \hookrightarrow \nu^*(\C^\pi),\\
    I&\:\pi^*TX\to \nu(\C^\pi), & \quad \Pi&\: \nu^*(\C^\pi) \to \pi^*T^*X.
\end{align*}

\subsubsection*{The flat foliated bundle} The bundle of the relative algebroid is $B\:= \pi^*TX\to P$. It has a canonical flat $\ker T\pi$-connection $\onabla$ that can be restricted to $\C^\pi \subset \ker T\pi$ to obtain a canonical flat $\C^\pi$-bundle $(B, \onabla)$. In other words, the connection of the bundle $(B, \onabla)$ is characterized by 
\[
    \onabla_v \pi^*X = 0, \quad \mbox{for $v\in \Gamma(\C^\pi)$ and $X\in \mf{X}(X)$},
\]
and extended through the Leibniz rule. 

There is a more concrete description of $\onabla$ in terms of the canonical flat $\C^\pi$-bundle $(\nu(\C^\pi), \onabla^{\mathrm{Bott}})$. While $\onabla^{\mathrm{Bott}}$ does not preserve $\Gamma(\C)$, we claim that
\begin{equation}\label{eq:FlatConnectionPfaffian}
    \onabla_v b = \Pi \left(\onabla^{\mathrm{Bott}}_v I(b) \right) 
\end{equation}
for $b\in \Gamma(B)$. Indeed, both sides vanish when $b = \pi^*X$ for some $X\in \mf{X}(X)$, and satisfy the same Leibniz rule.

For future reference we will also derive an explicit formula for $\onabla$ on $B^*$. First, we check that the Bott connection $\onabla^{\mathrm{Bott}}$ on $\nu^*(\C^\pi)$ is given by
\[
    \onabla^{\mathrm{Bott}}_v \alpha = -\iota_v \d \alpha
\]
for $v\in \Gamma(\C^\pi)$ and  $\alpha\in \Gamma(\nu^*(\C^\pi))$. By regarding $\nu^*(\C^\pi) \subseteq T^*P$ as the annihilator of $\C^\pi$, the expression $\d \alpha$ makes sense. Dualizing Equation (\ref{eq:FlatConnectionPfaffian}) yields:
\begin{equation}\label{eq:DualFlatConnectionPfaffian}
    \onabla_v \alpha = - \Pi(\iota_v \d I(\alpha))
\end{equation}
for $v\in \Gamma(\C^\pi)$, $\alpha \in \Gamma(B^*)$

\subsubsection*{The canonical relative derivation} The relative derivation $\D^\C$ induced by the Pfaffian fibration on $(B, \onabla)$ is characterized by the symbol $I\: B\to \nu(\C^\pi)$ and the equation $\D^\C \pi^*\alpha = \pi^*\d \alpha$ for $\alpha\in\Omega^\bullet_{TX}$. It can be seen as an extension of the de Rham differential on $TX$ to $(\pi^*TX, \onabla)$, a statement that will be made precise in Proposition \ref{prop:CharacterizationPfaffianFibrationsByDerivations}. Alternatively, the derivation $\D^\C$ can be defined through the equation 
\begin{equation}\label{eq:PfaffianDerivationExplicit}
    \D^\C \alpha = \Pi (\d I(\alpha)), 
\end{equation}
for $\alpha\in \Omega^\bullet_{(\pi^*TX, \onabla)}$. Here, $I(\alpha)$ is regarded as a section of $\nu^*(\C^\pi)\subset T^*P$ so that the exterior derivative makes sense. In Equation (\ref{eq:PfaffianDerivationExplicit}), it is crucial that $\alpha$ is a flat section of $(\wedge^\bullet B^*, \onabla)$, otherwise $\d I(\alpha)$ would not be a section of $\wedge^{\bullet+1}\nu^*(\C^\pi)$.

The bracket $[\cdot, \cdot]^\C$ dual to the derivation $\D^\C$ has a similar description. It is determined by
\[
    [\pi^*X, \pi^*Y]^\C = \pi^*[X, Y], \quad X, Y\in \mf{X}(X)
\]
with anchor $I\: B\to \nu(\C^\pi)$. Alternatively, the bracket is given by
\[
    [b_1, b_2]^\C = \Pi(\overline{[X_1, X_2]}),
\]
where $X_1, X_2\in \Gamma_{\C}$ are such that $\Pi(X_i) = b_i$ for two \emph{flat} section $b_1, b_2\in \Gamma_{(B, \onabla)}$. The bar above the Lie bracket indicates the projection to $\nu(\C^\pi)$. We have proven the following statement.

\begin{proposition}\label{prop:RelativeAlgebroidUnderlyingPfaffianFibration}
    A Pfaffian fibration $(P, \C, \pi)$ gives rise to an algebroid $(B, \overline{\nabla}, \D^\C)$ over $P$ relative to $\C^\pi$.
\end{proposition}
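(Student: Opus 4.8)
The statement gathers the constructions of this subsection into the single claim that the triple $(B,\onabla,\D^\C)$, with $B=\pi^*TX$, satisfies the axioms of an algebroid relative to the foliation $\C^\pi$ as recalled in Appendix~\ref{sec:RelativeAlgebroids}; so the proof consists of checking those axioms, most of the analytic content being already in place above. First one notes that $\C^\pi$ is indeed a foliation: it is involutive by the $\pi$-involutivity axiom, and of constant rank because transversality forces $\rk\C^\pi=\rk\C+\rk\ker T\pi-\dim P$.

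Next one checks that $(B,\onabla)$ is a flat $\C^\pi$-bundle. The connection was prescribed on the generating sections $\pi^*X$, $X\in\mf{X}(X)$, by $\onabla_v\pi^*X=0$ and extended by the Leibniz rule; this is consistent since the $\pi^*X$ generate $\Gamma(B)$ over $C^\infty(P)$ with only $C^\infty(P)$-linear relations. Flatness is checked on generators: as $\C^\pi$ is involutive and $\onabla$ is trivial on the $\pi^*X$, the curvature $R^\onabla(v,w)\pi^*X$ vanishes for $v,w\in\Gamma(\C^\pi)$, hence $R^\onabla=0$ by tensoriality. Equivalently one may take (\ref{eq:FlatConnectionPfaffian}) as the definition: it is manifestly a connection (as $\onabla^{\mathrm{Bott}}$ is one and $\Pi$ is a bundle map), and, as observed in the text, it agrees with the previous prescription on the $\pi^*X$ and obeys the same Leibniz rule.

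The substantive step is that $\D^\C$ is a well-defined degree-$1$ derivation relative to $\C^\pi$. Well-definedness requires that for $\alpha\in\Omega^\bullet_{(B,\onabla)}$ the form $\d I(\alpha)\in\Omega^{\bullet+1}(P)$ still lies in $\Gamma(\wedge^{\bullet+1}\nu^*(\C^\pi))$, i.e.\ $\iota_v\d I(\alpha)=0$ for all $v\in\Gamma(\C^\pi)$, so that $\D^\C\alpha=\Pi(\d I(\alpha))$ makes sense in $\Omega^{\bullet+1}_B$. Since $\iota_vI(\alpha)=0$, Cartan's formula gives $\iota_v\d I(\alpha)=\mathcal{L}_vI(\alpha)$, which vanishes for all $v\in\Gamma(\C^\pi)$ precisely when $I(\alpha)$ is $\onabla^{\mathrm{Bott}}$-flat (recall $\onabla^{\mathrm{Bott}}_v=-\iota_v\d$ on $\nu^*(\C^\pi)$); so it suffices that $I$ intertwines $\onabla$ on $\wedge^\bullet B^*$ with $\onabla^{\mathrm{Bott}}$ on $\wedge^\bullet\nu^*(\C^\pi)$, which is exactly the content of (\ref{eq:DualFlatConnectionPfaffian}), verified on the generators $\pi^*\xi$ (where both connections annihilate $I(\pi^*\xi)=\pi^*\xi$ because $\iota_v\pi^*\d\xi=0$ for $v\in\ker T\pi$) and extended by Leibniz. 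Thus $\alpha$ being $\onabla$-flat forces $I(\alpha)$ to be $\onabla^{\mathrm{Bott}}$-flat, which is the required vanishing; this is precisely the role of flatness flagged after (\ref{eq:PfaffianDerivationExplicit}). The Leibniz rule for $\D^\C$ is then formal: $I$ and $\Pi$ are morphisms of exterior algebras with $\Pi\circ I=\id$ and $\d$ is a graded derivation, whence $\D^\C(\alpha\wedge\beta)=\Pi\big(\d(I(\alpha)\wedge I(\beta))\big)=\D^\C\alpha\wedge\beta+(-1)^{|\alpha|}\alpha\wedge\D^\C\beta$. Finally, the compatibility of $\D^\C$ with $\onabla$ and the identification of its symbol with $I\colon B\to\nu(\C^\pi)$ follow from $I(\pi^*\alpha)=\pi^*\alpha$ and $\d\pi^*\alpha=\pi^*\d\alpha$, which give $\D^\C\pi^*\alpha=\pi^*\d\alpha$ (so $\D^\C$ genuinely extends the de Rham differential of $TX$); dually, $[\pi^*X,\pi^*Y]^\C=\pi^*[X,Y]$, so if the notion of relative algebroid in Appendix~\ref{sec:RelativeAlgebroids} carries a Jacobi-type identity it is inherited from that of $\mf{X}(X)$, equivalently from the Jacobi identity of vector fields on $P$ via $[b_1,b_2]^\C=\Pi(\overline{[X_1,X_2]})$, once that formula is seen to be independent of the $\C^\pi$-ambiguity in the $\C$-lifts of flat sections.

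The one genuine obstacle is the well-definedness of $\D^\C$ in the third step: unlike the remaining points it is not bookkeeping with the splitting $\nu(\C^\pi)\cong\C/\C^\pi\oplus\ker T\pi/\C^\pi$, and it relies essentially on the flatness of $\alpha$ through the compatibility of $I$ with the two Bott-type connections. In the bracket picture the analogous delicate point is the well-definedness of $\Pi(\overline{[X_1,X_2]})$, which again uses flatness of $b_1,b_2$.
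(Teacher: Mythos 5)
Your proposal is correct and follows essentially the same route as the paper, whose ``proof'' is just the preceding construction: the splitting $\nu(\C^\pi)\cong \C/\C^\pi\oplus\ker T\pi/\C^\pi$, the connection $\onabla=\Pi\circ\onabla^{\mathrm{Bott}}\circ I$, and the derivation $\D^\C=\Pi\circ\d\circ I$ on flat forms. You additionally supply the one detail the paper only asserts---that flatness of $\alpha$ forces $\d I(\alpha)$ to annihilate $\C^\pi$, via the Cartan formula and the fact that $\onabla^{\mathrm{Bott}}$ preserves $\im I$---which is a welcome completion rather than a different argument (and note the appendix imposes no Jacobi identity on a relative almost algebroid, so your final hedge is unnecessary).
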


Next, we make precise how the Pfaffian fibration and the underlying relative algebroid relate. First, we need some preliminary results that characterize Pfaffian structures as derivations. 

Let $\pi\: P\to X$ be a submersion and $\FF\subset \ker T\pi$ a foliation. By restricting the canonical $\ker T\pi$-connection on $\pi^*TX$ to $\FF$, we obtain a foliated flat bundle $(\pi^* TX, \onabla)\to (P, \FF)$. Because $\FF$ is tangent to the fibers of $\pi$, there is a natural map $\pi_*\:\DD^1_{(TX, \onabla)}\to \pi^*\DD^1_{TX}$ obtained by restricting a derivation $\D\: \Omega^\bullet_{(\pi^*TX, \onabla)} \to \Omega^{\bullet + 1}_{\pi^*TX}$ to $\pi^*\Omega^\bullet_{TX}\subset\Omega^\bullet_{(\pi^*TX, \onabla)}$. Also, recall that we can interpret the de Rham differential $\d$ as a section of $\DD^1_{TX}$. 

\begin{proposition}\label{prop:CharacterizationPfaffianFibrationsByDerivations}
    Let $\pi\: P \to X$ be a submersion and $\FF\subset \ker T\pi$ a sub-foliation. There is a one-to-one correspondence between Pfaffian distributions $\C\subset TP$ with $\C\cap \ker T \pi = \FF$ and derivations $\D\in \Gamma(\DD^1_{(\pi^*TX, \onabla)})$ satisfying $\pi_* \D = \pi^*\d$.
\end{proposition}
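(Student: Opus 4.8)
The plan is to construct explicit maps in both directions and check they are mutually inverse. Given a Pfaffian distribution $\C\subset TP$ with $\C\cap\ker T\pi = \FF$, the induced derivation is the $\D^\C$ already constructed above (Equation (\ref{eq:PfaffianDerivationExplicit})): one only needs to verify that $\pi_*\D^\C = \pi^*\d$, which is essentially the defining property $\D^\C\pi^*\alpha = \pi^*\d\alpha$. Conversely, given a derivation $\D\in\Gamma(\DD^1_{(\pi^*TX,\onabla)})$ with $\pi_*\D = \pi^*\d$, I would produce a distribution as follows. The derivation $\D$ has a symbol $\sigma_\D\colon \pi^*TX \to \nu(\FF)$ (the anchor of the dual bracket, living in $\Hom(\pi^*TX,\nu(\FF))$ since $\D$ raises degree by one and is relative to $\FF$); the condition $\pi_*\D = \pi^*\d$ forces this symbol to be a splitting of the projection $\overline{T\pi}\colon\nu(\FF)\to\pi^*TX$ — i.e., $\overline{T\pi}\circ\sigma_\D = \id$. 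Pulling back along $\nu(\FF) = TP/\FF$, the image of $\sigma_\D$ together with $\FF$ itself determines a subbundle $\C_\D\subset TP$ containing $\FF$, with $\C_\D/\FF$ complementary to $\ker T\pi/\FF$ inside $\nu(\FF)$; transversality $\C_\D + \ker T\pi = TP$ is then immediate from the splitting property of the symbol.

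\textbf{The $\pi$-involutivity check.} The subbundle $\C_\D$ so obtained satisfies $\C_\D\cap\ker T\pi = \FF$ by construction, and $\FF$ is involutive by hypothesis, so $\C_\D$ is automatically a Pfaffian distribution — there is nothing further to verify on this side. The only genuine content is that the symbol of a derivation defined on flat forms relative to $\FF$ really does land in $\nu(\FF)$ and is tensorial; this is part of the general theory of relative derivations recalled in Appendix \ref{sec:RelativeAlgebroids}, so I would cite it rather than reprove it.

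\textbf{Checking the two constructions are inverse.} Starting from $\C$, forming $\D^\C$, and reading off its symbol: by Equation (\ref{eq:PfaffianDerivationExplicit}) the symbol of $\D^\C$ is exactly $I\colon B\to\nu(\C^\pi)$, whose image is $\C/\C^\pi$, so $\C_{\D^\C} = \C$. For the other direction, starting from $\D$, building $\C_\D$, and forming $\D^{\C_\D}$: both derivations have the same symbol (namely $\sigma_\D$), and both satisfy $\pi_*(\,\cdot\,) = \pi^*\d$; since a relative derivation of degree one on $(\pi^*TX,\onabla)$ is determined by its symbol together with its restriction to the pullback forms $\pi^*\Omega^\bullet_{TX}$ — because $\Omega^\bullet_{(\pi^*TX,\onabla)}$ is generated over flat forms by $\pi^*\Omega^\bullet_{TX}$ and $\D$ is a graded derivation — the two agree. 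This uniqueness statement (a relative derivation determined by symbol plus action on a generating set, via the Leibniz rule) is the technical crux; everything else is bookkeeping with the canonical splitting $\nu(\FF)\cong \C/\FF\oplus\ker T\pi/\FF$.

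\textbf{Main obstacle.} The hard part is the uniqueness/well-definedness argument: showing that prescribing the symbol and the values $\D\pi^*\alpha = \pi^*\d\alpha$ determines $\D$ on all of $\Omega^\bullet_{(\pi^*TX,\onabla)}$, and that the resulting $\D$ genuinely preserves flat forms (i.e., $\D\alpha\in\Omega^{\bullet+1}_{\pi^*TX}$ but with the right compatibility so that the construction is consistent). This requires knowing precisely how flat forms decompose — locally a flat form is a sum $\sum f_i\,\pi^*\alpha_i$ with $f_i$ flat functions (i.e.\ $\FF$-constant) — and checking the Leibniz rule $\D(f\,\pi^*\alpha) = \d f\wedge \pi^*\alpha \pm f\,\pi^*\d\alpha$ is forced, where $\d f$ on a flat function is a well-defined section of $\nu^*(\FF)$ paired via the symbol. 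I expect this to go through cleanly using the machinery of Appendix \ref{sec:RelativeAlgebroids}, but it is where care is needed.
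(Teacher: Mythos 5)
Your proposal is correct and follows essentially the same route as the paper: both directions hinge on the observation that the condition $\pi_*\D = \pi^*\d$ forces the symbol/anchor $\rho_\D\colon \pi^*TX\to\nu(\FF)$ to split $\overline{T\pi}$, so its image is a complement of $\ker T\pi/\FF$ in $\nu(\FF)$, i.e.\ a connection relative to $\FF$, equivalently a Pfaffian distribution with $\C\cap\ker T\pi=\FF$. You are in fact slightly more explicit than the paper in verifying that the two constructions are mutually inverse (via the uniqueness of a relative derivation given its symbol and its values on $\pi^*\Omega^\bullet_{TX}$, using that flat forms are locally $C^\infty_\FF$-combinations of pullback forms), a point the paper leaves implicit.
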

\begin{proof}
    First, there is a one-to-one correspondence between Pfaffian distributions $\C\subset TP$ with $\C\cap \ker T\pi = \FF$ and \textbf{connections relative to $\FF$}, i.e. subbundles $\tilde{\C}\subset \nu(\FF)$ such that $\tilde{\C} \oplus \ker T\pi/\FF = \nu (\FF)$.

    Next, we establish a one-to-one correspondence between \textit{connections relative to $\FF$} and derivations $\D\in \Gamma(\DD^1_{(\pi^*TX, \onabla)})$ with $\pi_*\D = \pi^*\d$. One direction is through the proof of Proposition \ref{prop:RelativeAlgebroidUnderlyingPfaffianFibration}. For the converse, suppose we are given such a derivation $\D$. The anchor $\rho_\D\: \pi^*TX \to \nu(\FF)$ is determined by
    \begin{equation}\label{eq:AnchorDerivationRelativeConnection}
        \langle \d f, \rho_\D(X)\rangle = \langle \D f, X\rangle 
    \end{equation}
    for any $X\in TX$ and $f\in C^\infty_\FF$. Note that this is well-defined because $\d f$ annihilates $\FF$. If we take $f\in \pi^*C^\infty_X$, then it follows from $\pi_* \D = \pi^*\d f$ that $\rho_\D$ must be injective and that the map $T\pi\circ \rho_\D \: \pi^*TX\to \pi^*TX$ must be surjective. Therefore, the image of $\rho_\DD$ must be a complement of $\ker T\pi/\FF$ inside $\nu(\FF)$. In other words, $\rho_\D (\pi^*TX)$ is a connection relative to $\FF$. 
\end{proof}

\begin{theorem}\label{thm:PfaffianFibrationsAsRelativeAlgebroids}
    Let $(P, \C, \pi)$ be a Pfaffian fibration with underlying relative algebroid $(\pi^*TX, \onabla, \D^\C)$. 
    \begin{enumerate}
        \item The (partial) prolongation spaces of $(P, \C, \pi)$ and $(\pi^*TX, \onabla, \D^\C)$ are isomorphic. Consequently, $(P, \C, \pi)$ is 1-integrable if and only if $(\pi^*TX, \onabla, \D^\C)$ is.
        \item The tableau maps of $(P, \C, \pi)$ and $(\pi^*TX, \onabla, \D^\C)$ are canonically identified.
        \item If $(P, \C, \pi)$ is 1-integrable, then the relative algebroid $((\pi^{(1)})^*TX, \onabla, \D^{\C^1})$ underlying the prolongation $(P^{(1)}, \C^1, \pi^{(1)})$ is canonically isomorphic to the prolongation of $(\pi^*TX, \onabla, \D^\C)$ as a relative algebroid. 
        \item Germs of holonomic sections of $(P, \C, \pi)$ are in one-to-one correspondence with germs of realizations of $(\pi^*TX, \onabla, \D^\C)$ modulo diffeomorphism. 
    \end{enumerate}
\end{theorem}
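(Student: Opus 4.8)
The plan is to establish each of the four parts using the characterization of the Pfaffian structure as a relative derivation (Proposition \ref{prop:CharacterizationPfaffianFibrationsByDerivations}) together with the explicit formulas \eqref{eq:PfaffianDerivationExplicit} and \eqref{eq:DualFlatConnectionPfaffian}. The central idea throughout is that the symbol $I\colon \pi^*TX \to \nu(\C^\pi)$ is an isomorphism onto the relative connection $\C/\C^\pi$, so every piece of data of the Pfaffian fibration (splittings, curvature, tableau) can be transported verbatim to the relative-algebroid side via $I$ and $\Pi$.

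For (1) and (2): I would first unwind the definition of the partial prolongation $J^1_\C P$ as the bundle of splittings $h\colon T_{\pi(x)}X \to T_xP$ with $h^*\theta_\C = 0$, and compare it to the relative-algebroid prolongation, which (by the corresponding definition in Appendix \ref{sec:RelativeAlgebroids}) consists of bundle maps $h\colon \pi^*TX \to B$ compatible with $\onabla$ and satisfying $\rho_{\D^\C} \circ h = I$, or equivalently lifting the anchor. The condition $h^*\theta_\C = 0$ says exactly that $\mathrm{im}\,h \subseteq \C$, i.e. $h$ factors through $I$; this gives a fiberwise bijection, which is smooth and affine over the common base $P$, modeled on $\Hom(\pi^*TX, \C^\pi) \cong \Hom(\pi^*TX, B)$ on both sides. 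For the full prolongation, I would show that the curvature condition $h^*\kappa_\C = 0$ matches the "closedness" condition $\delta(h) = 0$ appearing in the relative-algebroid prolongation: this is where \eqref{eq:PfaffianDerivationExplicit} enters, since $\D^\C = \Pi \circ \d \circ I$ and the curvature $\kappa_\C$ is built from $\d$ on one-forms annihilating $\C$, so pulling back by $h$ and using $h^*\theta_\C = 0$ identifies the two. The 1-integrability equivalence and the identification of tableau maps then follow immediately, since the tableau $\tau_\C$ is defined through $\kappa_\C$ and the symbol space $\tau_\C^{(1)} = \{\xi : \delta_\tau(\xi) = 0\}$ is literally the same on both sides under $I$.

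For (3): having identified the prolongations as spaces, I would observe that $P^{(1)}$, regarded as a PDE inside $J^1\pi$, carries its own Cartan Pfaffian structure $(P^{(1)}, \C^1, \pi^{(1)})$, hence its own relative algebroid $((\pi^{(1)})^*TX, \onabla, \D^{\C^1})$ by Proposition \ref{prop:RelativeAlgebroidUnderlyingPfaffianFibration}. On the other hand, the prolongation of the relative algebroid $(\pi^*TX, \onabla, \D^\C)$ is defined (in the appendix) on the pullback of $B$ to the prolongation space together with the "prolonged derivation." Both constructions produce a derivation characterized by the same property — namely that it extends $\pi^{(1),*}\d$ and has symbol the tautological lift — so by the uniqueness half of Proposition \ref{prop:CharacterizationPfaffianFibrationsByDerivations} (applied over $P^{(1)}$ with foliation $(\C^1)^{\pi^{(1)}}$), the two derivations coincide under the canonical identification from (1). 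The main point to check is that the foliations match: $(\C^1)^{\pi^{(1)}}$ on $P^{(1)}$ corresponds to the foliation used to define the prolonged relative algebroid; this is essentially the statement that the Cartan distribution of the prolongation restricts correctly to the fibers, which is standard.

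For (4): a germ of a holonomic section $\sigma\colon X \dashrightarrow P$ is one tangent to $\C$, equivalently $\sigma^*\theta_\C = 0$. Pulling back $B = \pi^*TX$ along $\sigma$ gives $\sigma^*B = TX$ over the (open) image, and the condition that $\sigma$ is holonomic translates, via \eqref{eq:PfaffianDerivationExplicit}, into the statement that $\sigma$ intertwines $\D^\C$ with the de Rham differential $\d$ on $TX$ — which is exactly the definition of a realization of the relative algebroid. The "modulo diffeomorphism" appears because a realization of a relative algebroid over a manifold $N$ is a pair (a manifold, a map intertwining the derivations) up to diffeomorphism of $N$, whereas a holonomic section comes with a fixed parametrization by (an open set of) $X$; the content is that $\sigma$ and $\d$ determine each other, and the parametrization by $X$ is the canonical choice of representative. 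I expect the only subtlety here to be bookkeeping about domains of definition (germs vs. local sections), which is routine.

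The main obstacle I anticipate is part (3): carefully matching the relative-algebroid prolongation — built abstractly from $\D^\C$ as a derivation on a pullback bundle with a new foliation — against the geometrically-defined Pfaffian prolongation $(P^{(1)}, \C^1, \pi^{(1)})$, and in particular checking that the two foliations, the two flat connections, and the two symbols agree before invoking Proposition \ref{prop:CharacterizationPfaffianFibrationsByDerivations}. Parts (1), (2), (4) are essentially unwinding definitions once the symbol $I$ is recognized as the bridge.
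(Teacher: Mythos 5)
Your overall strategy coincides with the paper's: parts (1) and (4) are handled the same way (splittings tangent to $\C$ correspond to pointwise extensions of $\D^\C$, the curvature condition $h^*\kappa_\C=0$ to the completion condition $\D^{h_x}\circ\D^\C=0$; a holonomic section yields the tautological realization, and conversely a realization covering $r$ locally factors through a section because $T\pi\circ Tr$ is fiberwise invertible). Two caveats. First, a point of precision in (1): the (partial) prolongation space of a relative algebroid is a space of pointwise \emph{derivations} $\tilde{\D}_x\in\DD^1_{\pi^*TX}$ with $\Pi(\tilde{\D}_x)=\D_x$, not of bundle maps $h\colon\pi^*TX\to B$ with $\rho_{\D^\C}\circ h=I$ as you write; the bridge to splittings is the symbol map together with the identification $J^1\pi\cong\pi_*^{-1}(\d)$ (Corollary 5.5 of \cite{FernandesSmilde2025}), which the paper invokes explicitly and which your sketch implicitly needs. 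Second, part (2) does not ``follow immediately'': the two tableau maps take values in different spaces ($\tau_\C$ in $\Hom(\pi^*TX, TP/\C)$, $\tau_{\D^\C}$ in $\DD^1_{(\pi^*TX,\onabla)}$), and the paper establishes the precise relation $\tau_\C=\theta_\C\circ\sigma\circ\tau_{\D^\C}$ by a short computation with the Bott connection; you should supply that computation rather than assert the identification. For (3) you propose a genuinely different route---direct verification via the uniqueness clause of Proposition \ref{prop:CharacterizationPfaffianFibrationsByDerivations} applied over $P^{(1)}$ with the foliation $(\C^1)^{\pi^{(1)}}$---whereas the paper simply observes that both sides are, by construction, the relative algebroid underlying the PDE $P^{(1)}\subseteq J^1\pi$, relying on the compatibility of the PDE-to-algebroid construction with prolongation from \cite{FernandesSmilde2025}. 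Your route is workable but requires carrying out the foliation/connection/symbol matching that you yourself flag as the main obstacle; the paper's route avoids this entirely at the cost of citing the earlier result.
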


\begin{proof}
    Recall from Corollary 5.5 in \cite{FernandesSmilde2025} that
    \[
        J^1\pi \cong \pi_*^{-1}(\d) = \{ \D_x\in \DD^1_{\pi^*TX} \ \big\vert \ \pi_*(\DD_x) = \d_{\pi(x)} \} 
    \]
    where the isomorphism is given by the symbol map $\sigma\: \DD^1_{\pi^*TX}\to \Hom(\pi^*TX, TP)$. 
    
    The equation $h_x^*\theta_\C = 0$ is equivalent to $h_x$ defining a \textit{pointwise extension}
    \[
        \D^{h_x}\: \Omega^\bullet_{\pi^*TX}\to \wedge^{\bullet+1}T_{\pi(x)}^*X
    \]
    of $\D^\C$ at $x\in P$. If $h_x$ satisfies $h^*_x\theta_\C=0$, then the equation $h^*_x\kappa_\C$ is equivalent to this extension being a completion, i.e. satisfying $\D^{h_x} \circ \D^\C =0$. It follows that $(P, \C, \pi)$ and $(\pi^*TX, \onabla, \D^\C)$ have the same (partial) prolongation spaces, which proves (1).
    
    For (2), the symbol map and the projection $\theta_\C: \nu(\C^\pi)\to TP/\C$ assemble into a map
    \[
    	\theta_\C \circ \sigma \: \DD^1_{(\pi^*TX, \onabla)}\to \Hom(\pi^*TX, TP/\C).
    \]
    The tableau map $\tau_{\D^\C}$ of $(\pi^*TX, \onabla, \D^\C)$ is related to the tableau map $\tau_\C$ of $(P, \C, \pi)$ through
    \[
    	\tau_\C = \theta_\C \circ \sigma \circ \tau_{\D^\C}.
    \]
    To prove this claim, let $v\in\Gamma(\C^\pi)$ and $X\in \mf{X}(X)$. Recall that the symbol of $\D^\C$ is the inclusion $I\: \C\hookrightarrow \nu(\C^\pi)$. Then
    \begin{align*}
    	\theta_\C \circ \sigma\left(\tau_{\D^\C}(v)\right)(\pi^*X) &= \theta_\C \circ \sigma(\onabla_v \D^\C)(\pi^*X) = \theta_\C\left( \onabla_v I(\pi^*X)\right)\\
    	&= \theta_\C\left([v, \tilde{X}]\right) = \tau_\C(v)(\pi^*X),
    \end{align*}
	where $\tilde{X}\in \Gamma(\C)$ is $\pi$-related to $X$.

    Item (3) follows because on one hand $(P^{(1)}, \C^1, \pi^{(1)})$ is the Pfaffian fibration corresponding to the PDE $P^{(1)}\subset J^1\pi$, whereas on the other hand, the prolongation of $(\pi^*TX, \onabla, \D^\C)$ is the relative algebroid underlying the PDE $P^{(1)} \subseteq J^1\pi$. 

    For item (4), let $\sigma\: U\to P$ be a local holonomic section of $(P, \C, \pi)$. We can then construct a realization $\Theta\: TU\to \pi^*TX$  of $(\pi^*TX, \onabla, \D^\C)$ covering $\sigma\: U\to P$ by setting $\Theta(X_x)= X_x\in (\pi^*TX)_{\sigma(x)}$. Conversely, if $\Theta\: TU\to \pi^*TX$ is a realization of covering some map $r\: U\to P$, then $T\pi\circ Tr$ is pointwise an isomorphism, so in a neighborhood of any point, there is a section $\sigma\: U \dashrightarrow P$ such that (in a neighborhood) $r = \sigma \circ (\pi \circ r)$. This section is holonomic because the image of $r$ is tangent to $\C$. So, up to (local) diffeomorphism, the realization determines a holonomic section.
\end{proof}

\subsection{Morphisms of Pfaffian fibrations}

Next, we discuss the naturality of the construction of the relative algebroid underlying a Pfaffian fibration.

\begin{definition}\label{def:morphismOfPfaffianFibration}
    A \textbf{morphism of Pfaffian fibrations} from $(P_1, \C_1, \pi_1)$ to $(P_2, \C_2, \pi_2)$ is a map $(\Phi, \varphi)\: P_1\to P_2$ covering $\varphi\: X_1\to X_2$ (that is, $\pi_2\circ \Phi = \varphi\circ \pi_1$) satisfying  $T\Phi(\C_1)\subseteq \C_2$. 
\end{definition}

\begin{remark}
    A morphism of Pfaffian fibrations $\Phi\: (P_1, \C_1, \pi_1)\to (P_2, \C_2, \pi_2)$ satisfies
    \[
        \Phi^*\theta_{\C_2} = \Phi_*\circ \theta_{\C_1},
    \]
    where $\Phi_*\: TP_1/\C_1\to TP_2/\C_2$. Note that such a notion of morphisms of Pfaffian fibrations was considered in \cite{CattafiCrainicSalazar2020} when the map $\Phi$ covers the identity on the base of the fibration. 
\end{remark}

The next proposition shows that the algebroid underlying the Pfaffian fibration is natural.

\begin{proposition}\label{prop:NaturalityOfPfaffianRelativeAlgebroid}
    Let $(\Phi, \varphi)\: (P_1, \C_1, \pi_1)\to (P_2, \C_2, \pi_2)$ be a morphism of Pfaffian fibrations. The canonical bundle map $\Phi_*\: \pi_1^*TX_1\to \pi_2^*TX_2$ is a morphism of relative algebroids.
\end{proposition}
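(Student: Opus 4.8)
The plan is to verify directly that $\Phi_*$ intertwines all the structural data of the two relative algebroids: the flat $\C^\pi$-connections, the anchors (symbols), and the relative derivations. Recall that in all three cases the relative algebroid structure on $\pi_i^*TX_i$ is pinned down by its behaviour on pullback vector fields (resp.\ pullback forms) $\pi_i^*Z$ for $Z\in\mf{X}(X_i)$, extended by the Leibniz rule and $C^\infty$-linearity where appropriate. Since $\Phi$ covers $\varphi$, the bundle map $\Phi_*\:\pi_1^*TX_1\to\pi_2^*TX_2$ sends $\pi_1^*Z$ to $\pi_2^*(\varphi_*Z)$ whenever $Z$ is $\varphi$-related to a vector field on $X_2$ (and in general behaves $C^\infty$-linearly over $\Phi$), so it suffices to check compatibility on such sections. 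This reduces everything to the naturality of the de Rham differential $\d$ on the base under $\varphi$, together with the fact that $\Phi$ maps $\C_1^\pi$ into $\C_2^\pi$ — which follows from $T\Phi(\C_1)\subseteq\C_2$ and $\pi_2\circ\Phi=\varphi\circ\pi_1$, as the latter gives $T\Phi(\ker T\pi_1)\subseteq\ker T\pi_2$.

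Concretely I would proceed in three steps. First, compatibility with the flat connections: both $\onabla^1_v(\pi_1^*Z)$ and $\onabla^2_{\Phi_* v}(\pi_2^*(\varphi_*Z))$ vanish for $v\in\Gamma(\C_1^\pi)$ by the defining property of $\onabla$, and both sides obey the Leibniz rule, so $\Phi_*\circ\onabla^1 = \onabla^2\circ\Phi_*$ on all of $\Gamma(\pi_1^*TX_1)$ once one knows $\Phi$ maps $\C_1^\pi$-vectors to $\C_2^\pi$-vectors; this is where one invokes the remark that $T\Phi(\ker T\pi_1)\subseteq\ker T\pi_2$. (Equivalently, one can use the dual formula \eqref{eq:DualFlatConnectionPfaffian} and naturality of $\d$.) Hence $\Phi_*$ is a morphism of flat foliated bundles $(\pi_1^*TX_1,\onabla^1)\to(\pi_2^*TX_2,\onabla^2)$ covering $(\Phi,\varphi)$, which in particular sends flat local sections to flat local sections. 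Second, compatibility of the symbols: the symbol of $\D^{\C_i}$ is the inclusion $I_i\: \C_i\hookrightarrow\nu(\C_i^\pi)$ composed with the projection $\Pi_i$, i.e.\ it identifies $\C_i/\C_i^\pi\cong\pi_i^*TX_i$; since $T\Phi(\C_1)\subseteq\C_2$ and $T\Phi(\C_1^\pi)\subseteq\C_2^\pi$, the map $T\Phi$ descends to $\C_1/\C_1^\pi\to\C_2/\C_2^\pi$, and under the identifications this descended map is exactly $\Phi_*$, so anchors are intertwined. Third, compatibility of the derivations: for a flat form $\alpha\in\Omega^\bullet_{(\pi_1^*TX_1,\onabla^1)}$ use the explicit formula \eqref{eq:PfaffianDerivationExplicit}, $\D^{\C_i}\beta = \Pi_i(\d\,I_i(\beta))$. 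The key identity is $\Phi^*\!\big(I_2(\Phi_*\alpha)\big) = I_1(\alpha)$ as sections of $\nu^*(\C_1^\pi)\subset T^*P_1$ (which holds because $\Phi_*$ is dual-compatible with the inclusions $I_i$ via the splittings, together with $T\Phi(\C_1)\subseteq\C_2$); applying $\d$, using naturality $\d\circ\Phi^* = \Phi^*\circ\d$, and projecting via $\Pi_i$ gives $\Phi^*(\D^{\C_2}(\Phi_*\alpha)) = \D^{\C_1}(\alpha)$, which is precisely the statement that $\Phi_*$ is a morphism of relative algebroids (in the contravariant, forms-based formulation). One should take a moment to check that $\Phi_*\alpha$ is again flat — which is exactly Step one — so that $I_2(\Phi_*\alpha)$ lands in $\wedge^\bullet\nu^*(\C_2^\pi)$ and the formula \eqref{eq:PfaffianDerivationExplicit} applies.

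The main obstacle I anticipate is purely bookkeeping rather than conceptual: keeping the four maps $\Pi_i, I_i$ (and their duals) straight on each side and verifying the compatibility $\Phi^*\circ I_2 = I_1\circ(\Phi_*)^*$ between the two splittings $\nu(\C_i^\pi)\cong\C_i/\C_i^\pi\oplus\ker T\pi_i/\C_i^\pi$. This amounts to checking that $T\Phi$ respects both summands of the splitting — the $\C$-part by hypothesis $T\Phi(\C_1)\subseteq\C_2$, and the $\ker T\pi$-part because $\Phi$ covers $\varphi$ — and then dualizing. Once this is in place, every other step is a one-line consequence of the defining properties of $\onabla$ and $\D^\C$ together with the naturality of $\d$, exactly as in the proof of Proposition~\ref{prop:RelativeAlgebroidUnderlyingPfaffianFibration}. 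If one prefers, the whole argument can instead be phrased using the characterization of Proposition~\ref{prop:CharacterizationPfaffianFibrationsByDerivations}: a morphism of Pfaffian fibrations induces, by the remark $\Phi^*\theta_{\C_2} = \Phi_*\circ\theta_{\C_1}$, a compatible map on the $\DD^1$-bundles, and the condition $\pi_{i,*}\D^{\C_i} = \pi_i^*\d$ combined with naturality of $\d$ under $\varphi$ yields the claim; but I expect the direct verification above to be the cleanest to write out.
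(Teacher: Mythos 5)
Your proposal is correct and follows essentially the same route as the paper: establish $T\Phi(\C_1^{\pi_1})\subseteq\C_2^{\pi_2}$ and the compatibility of $T\Phi$ with the splittings of $\nu(\C_i^{\pi_i})$ (hence with the maps $I_i$ and $\Pi_i$), then deduce the connection identity from the dual formula (\ref{eq:DualFlatConnectionPfaffian}) and the derivation identity from naturality of $\d$ --- the paper merely checks the latter on the generators $C^\infty_{\C_2^{\pi_2}}$ and $\pi_2^*\Omega^1_{TX_2}$ via the Leibniz rule instead of on all flat forms at once, which is a stylistic difference only. One small wording issue: the pushforward $\Phi_*\alpha$ of a form is not defined for a general morphism, so your key identity should be stated contravariantly as $I_1\circ\Phi^* = \Phi^*\circ I_2$ applied to flat forms on $\pi_2^*TX_2$, exactly as in the paper.
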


\begin{proof}
    Since $\Phi$ covers $\varphi$, the bundle map $\Phi_*\: \pi_1^*TX_1\to \pi_2^*TX_2$ is $\Phi$ on the base and $T\varphi$ on the fibers.
    
    By definition, $T\Phi(\C_1) \subseteq \C_1$ and $T\Phi(\ker T\pi_1) \subseteq \ker T\pi_2$, which implies that $T\Phi(\C_1^{\pi_1})\subseteq \C_1^{\pi_2}$. In particular, the differential $T\Phi$ descends to $\overline{T\Phi}\: \nu(\C_1^{\pi_1})\to \nu(\C_2^{\pi_2})$ and preserves the splittings
    \[
    	\overline{T\Phi}\:\nu(\C_1^{\pi_1})\cong \C_1/\C_1^{\pi_1} \oplus \ker T\pi_1/\C_1^{\pi_1} \to \C_2/\C_2^{\pi_2}\oplus \ker T\pi_2/\C_2^{\pi_2}\cong  \nu(\C_2^{\pi_2}) 
    \]
    Denoting the dual of $\Phi_*$ by $\Phi^*:=(\Phi_*)^*$, both maps intertwine the projections $\Pi_1, \Pi_2$ and inclusion $I_1, I_2$ introduced in Section \ref{sec:PfaffianFibrationsAsRelativeAlgebroids}:
    \begin{align*}
		\Phi_*\circ \Pi_1 &= \Pi_2 \circ \overline{T\Phi}, & \overline{T\Phi}\circ I_1 &= I_2\circ \Phi_* \\
		I_1\circ \Phi^* &= \Phi^*\circ I_2, & \Pi_1\circ \Phi^* &= \Phi^*\circ \Pi_2.
    \end{align*}
    
    First, we have to show that $\Phi_*$ is a map of foliated vector bundles. According to Equation (\ref{eq:MapOfFoliatedVectorBundles}), the morphism condition is checked through the dual: for $\alpha\in \Gamma(\pi_2^*T^*X_2)$ and $v\in \C_1^{\pi_1}$, using Equation (\ref{eq:DualFlatConnectionPfaffian}):
    \begin{align*}
    	\onabla^1_v \Phi^*\alpha &= \Pi_1\left(\iota_v \d I_1 \Phi^*\alpha\right) = \Pi_1\left(\iota_v\Phi^*\d I_2\alpha\right)\\
    		&= \Pi_1\left(\Phi^*\iota_{\Phi_*(v)} \d I_2 \alpha \right) = \Phi^* \Pi_2\left( \iota_{\Phi_*(v)} \d I_2 \alpha \right) = \Phi^* \onabla^2_{\Phi_*(v)} \alpha,
    \end{align*}
    so $\Phi_*$ is indeed a morphism of flat foliated vector bundles.
    
    It remains to show that 
    \[
    	\D^{\C_1} \circ \Phi^* = \Phi^*\circ \D^{\C_2}.
    \]
    It is enough to check this equation on $f\in C^\infty_{\C_2^{\pi_2}}$ and $\pi_2^*\alpha$ for $\alpha\in \Omega^1_{TX_2}$:
    \begin{align*}
    	\langle \D^{\C_1} \circ \Phi^*f, \overline{X}\rangle &= \langle \d \Phi^*f, I_1\overline{X}\rangle = \langle \d f, \Phi_* I_1(\overline{X})\rangle \\
    	&= \langle \d f, I_2 \Phi_*(\overline{X})\rangle = \langle \D^{\C_2}f, \Phi_* (\overline{X})\rangle  = \langle \Phi^*\circ \d f, \overline{X} \rangle,
    \end{align*}
    for $\overline{X}\in \pi_1^*TX_1$. Also,
    \begin{align*}
    	\D^{\C_1} \circ \Phi^*\pi_2^*\alpha = \D^{\C_1}\pi_1^*\varphi^*\alpha = \pi_1^*\d \varphi^*\alpha = \Phi^*\pi_2^*\d \alpha = \Phi^* \D^{\C_2} \pi_2^*\alpha
    \end{align*}
    for $\alpha\in\Omega^1_{TX_2}$. This concludes the proof.
\end{proof}

The next lemma is a strengthening of parts of \cite[Proposition 4.6]{CattafiCrainicSalazar2020}.

\begin{lemma}\label{lem:MorphismsOfPfaffianFibrations}
Let $\pi_1\: P_1\to X_1$ and $\pi_2\: P_2\to X_2$ be submersions. 
\begin{enumerate}
    \item Suppose $(\Phi, \varphi)\: P_1\to P_2$ is a bundle map covering a local diffeomorphism $\varphi\: X_1\to X_2$. Then the induced map $(\Phi^{(1)}, \varphi)\: \:(J^1\pi_1, \C_1^1, \pi_1^{(1)})\to (J^1\pi_2, \C_2^1, \pi_2^{(1)})$ is a morphism of Pfaffian fibrations. 
    \item If $(\Phi, \varphi)\: (P_1, \C_1, \pi_1)\to (P_2, \C_2, \pi_2)$ is a morphism of Pfaffian fibrations covering a local diffeomorphism $\varphi\: X_1\to X_2$, then $(\Phi^{(1)}, \varphi)$ in (1) restricts to the partial prolongations 
    \[
        (\Phi^{(1)}, \varphi)\: (J^1_{\C_1}P_1, \C^1_1 , \pi^{(1)}_1)\to (J^1_{\C_2} P_2, \C^1_2, \pi_2^{(1)})
    \]
    which is a morphism of Pfaffian fibrations.
    \item The map $(\Phi^{(1)}, \varphi)$ restricts to
    \[
        (\Phi^{(1)}, \varphi)\: P^{(1)}_1\to P^{(1)}_2
    \]  
    which, when $(P_1, \C_1, \pi_1)$ and $(P_2, \C_2, \pi_2)$ are 1-integrable, is a morphism of Pfaffian fibrations.
\end{enumerate}
\end{lemma}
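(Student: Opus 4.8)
The plan is to realize the prolonged map $\Phi^{(1)}$ concretely and then reduce all three items to two naturality statements: one for the Cartan form and one for the curvature map. Using the identification $J^1\pi=\{h\colon T_{\pi(x)}X\to T_xP \mid T_x\pi\circ h=\id\}$ and the hypothesis that $\varphi$ is a local diffeomorphism, I would set $\Phi^{(1)}(h):=T\Phi\circ h\circ(T\varphi)^{-1}$; since $T\pi_2\circ T\Phi=T\varphi\circ T\pi_1$ this is again a splitting, so $\Phi^{(1)}\colon J^1\pi_1\to J^1\pi_2$ is a smooth bundle map with $p_1\circ\Phi^{(1)}=\Phi\circ p_1$ and $\pi_2^{(1)}\circ\Phi^{(1)}=\varphi\circ\pi_1^{(1)}$; equivalently, on holonomic data $\Phi^{(1)}(j^1_x\sigma)=j^1_{\varphi(x)}(\Phi\circ\sigma\circ\varphi^{-1})$.

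For item (1), the claim is that the Cartan forms $\theta^{(1)}_i\in\Omega^1(J^1\pi_i;p_1^*\ker T\pi_i)$ satisfy $(\Phi^{(1)})^*\theta^{(1)}_2=\widehat{\Phi}\circ\theta^{(1)}_1$, where $\widehat{\Phi}$ is the bundle map on coefficients induced by $T\Phi$ (which maps $\ker T\pi_1$ into $\ker T\pi_2$ because $\Phi$ covers $\varphi$). This is a direct unwinding of the definition of the Cartan form in Example \ref{ex:CartanForm}: evaluating $\theta^{(1)}_2$ at $\Phi^{(1)}(j^1_x\sigma)$ on a vector $T\Phi^{(1)}(Y)$, both defining terms of the Cartan form factor through $T_x\Phi$ — one because $p_1\circ\Phi^{(1)}=\Phi\circ p_1$, the other because $\pi^{(1)}_2\circ\Phi^{(1)}=\varphi\circ\pi^{(1)}_1$ together with $T_{\varphi(x)}(\Phi\circ\sigma\circ\varphi^{-1})=T_x\Phi\circ T_x\sigma\circ(T_x\varphi)^{-1}$ — so their difference equals $T_x\Phi(\theta^{(1)}_1(Y))$. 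Since $\C^1_i=\ker\theta^{(1)}_i$, this gives $T\Phi^{(1)}(\C^1_1)\subseteq\C^1_2$, which is item (1). The same inclusion, restricted to any submanifold of $J^1\pi_1$ that $\Phi^{(1)}$ carries into a submanifold of $J^1\pi_2$, takes care of the distribution-preservation condition in (2) and (3), so for those it remains only to check the images.

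For item (2): if $h^*\theta_{\C_1}=0$, i.e.\ $\im h\subseteq\C_1$, then $\im\Phi^{(1)}(h)=T\Phi(\im h)\subseteq T\Phi(\C_1)\subseteq\C_2$, so $\Phi^{(1)}(h)^*\theta_{\C_2}=0$ and $\Phi^{(1)}$ restricts to $J^1_{\C_1}P_1\to J^1_{\C_2}P_2$; since the Cartan distribution of the partial prolongation is the restriction of $\C^1_i$, item (1) restricted to these submanifolds, together with the fact that $\Phi^{(1)}$ covers $\varphi$, shows this restriction is a morphism of Pfaffian fibrations. For item (3) the extra ingredient is naturality of the curvature map of a morphism of Pfaffian fibrations, $\Phi_*\circ\kappa_{\C_1}=\kappa_{\C_2}\circ(T\Phi\times T\Phi)$ on $\C_1\times\C_1$ with $\Phi_*\colon TP_1/\C_1\to TP_2/\C_2$, which I would prove from the structure identity $\kappa_{\C_i}=-(\d^{\nabla_i}\theta_{\C_i})|_{\C_i\times\C_i}$ — valid for any linear connection $\nabla_i$ on $TP_i/\C_i$, since on pairs of sections of $\C_i$ the connection terms drop out — combined with naturality of the covariant exterior derivative and the identity $\Phi^*\theta_{\C_2}=\Phi_*\circ\theta_{\C_1}$ recorded after Definition \ref{def:morphismOfPfaffianFibration}. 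Granting this, for $h\in P^{(1)}_1$ (so $\im h\subseteq\C_1$ and $h^*\kappa_{\C_1}=0$) one computes
\[
\Phi^{(1)}(h)^*\kappa_{\C_2}(X,Y)=\kappa_{\C_2}\bigl(T\Phi\,h(\bar X),\,T\Phi\,h(\bar Y)\bigr)=\Phi_*\bigl(h^*\kappa_{\C_1}(\bar X,\bar Y)\bigr)=0,
\]
with $\bar X=(T\varphi)^{-1}X$, $\bar Y=(T\varphi)^{-1}Y$; hence $\Phi^{(1)}$ restricts to $P^{(1)}_1\to P^{(1)}_2$. When both Pfaffian fibrations are $1$-integrable, $P^{(1)}_i$ are smooth and submerse onto $P_i$ (hence onto $X_i$) with the restricted Cartan distribution, so exactly as in (2) this restriction of $\Phi^{(1)}$ — smooth, covering $\varphi$, and sending $\C^1_1$ into $\C^1_2$ by (1) — is a morphism of Pfaffian fibrations.

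The one step requiring genuine care is the curvature naturality used for (3): since $\Phi$ covers a local diffeomorphism but need not itself be an immersion or a submersion, one cannot in general push a frame of $\C_1$ forward to sections of $\C_2$ and argue via $\Phi$-relatedness of Lie brackets, so phrasing $\kappa_\C$ as the restriction of the covariant exterior derivative of the tautological Pfaffian form — which makes it manifestly tensorial — is what makes the argument go through. (Alternatively, (3) could be deduced from naturality of the underlying relative algebroid, Proposition \ref{prop:NaturalityOfPfaffianRelativeAlgebroid}, together with naturality of its prolongation and the identification of Theorem \ref{thm:PfaffianFibrationsAsRelativeAlgebroids}(3); the direct route above has the advantage of being self-contained.)
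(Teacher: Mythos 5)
Your proposal is correct. Items (1) and (2) are essentially the paper's argument: the paper verifies $T\Phi^{(1)}(\C^1_1)\subseteq\C^1_2$ via the characterization $X\in\C^1$ iff $Tp_1(X)=h_x\circ T\pi^{(1)}(X)$, which is the same computation you do with the Cartan form, and (2) is identical. Where you genuinely diverge is item (3). The paper explicitly declines to prove curvature naturality directly, on the grounds that a bracket computation would require $\Phi$-related sections of $\C_1$ and $\C_2$, which need not exist (since $\Phi$ is neither assumed injective nor submersive); it instead routes through the relative-algebroid correspondence, invoking Proposition \ref{prop:NaturalityOfPfaffianRelativeAlgebroid}, the naturality of prolongations of relative algebroids (Proposition \ref{prop:NaturalityOfProlongationsOfRelativealgebroids}), and the identification of prolongation spaces in Theorem \ref{thm:PfaffianFibrationsAsRelativeAlgebroids} — exactly the alternative you mention in your closing parenthesis. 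Your direct route sidesteps the related-sections obstruction by writing $\kappa_{\C}=-(\d^{\nabla}\theta_{\C})\vert_{\C\times\C}$ for an auxiliary connection, which renders $\kappa_\C$ manifestly tensorial; the naturality identity $\Phi_*\circ\kappa_{\C_1}=\kappa_{\C_2}\circ(T\Phi\times T\Phi)$ then follows because, after pulling back, the extra Leibniz term $(\d^\nabla\Phi_*)\wedge\theta_{\C_1}$ is contracted against $\theta_{\C_1}$ on vectors in $\C_1=\ker\theta_{\C_1}$ and so vanishes — the same mechanism that makes $\kappa_\C$ independent of $\nabla$. That step deserves to be spelled out (it is slightly more than bare ``naturality of $\d^\nabla$''), but it is sound. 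The trade-off: your argument is self-contained and stays entirely within the Pfaffian-fibration language, at the cost of an auxiliary connection; the paper's argument is shorter given its machinery and reinforces the dictionary between Pfaffian fibrations and relative algebroids, which is the point of the section. Both are valid proofs.
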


\begin{proof}
	Let us identify	$J^1\pi_i = \{ h_x\: T_{\pi_i(x)} X_i \to T_x P_i, \ | \ \mbox{splitting of $T_x\pi_i$} \}$. Then the map $\Phi^{(1)}\: J^1\pi_1\to J^1\pi_2$ is given by $\Phi^{(1)}(h_x) = T_x\Phi \circ h_x \circ \left(T_{\pi_i(x)} \varphi\right)^{-1}$. Let $p_1\: J^1\pi_i\to P_i$ be the projection and recall from Example \ref{ex:CartanForm} that
	\[
		X\in \C^1_i \mbox{ above $h_x\in J^1\pi_i$} \iff Tp_1(X) = h_x \circ T\pi^{(1)}_i(X).
	\]
	Therefore, when $X\in \C^1_1$, then
	\begin{align*}
		Tp_1(T\Phi^{(1)}(X)) &= T\Phi\circ  Tp_1(X)= T\Phi\circ h_x \circ T\pi_1^{(1)} (X)= \Phi^{(1)}(\sigma_x)\circ T\pi_2^{(1)} (T\Phi^{(1)}(X)), 
	\end{align*}	
	so indeed $T\Phi(X)\in \C^1_2$.
	
	For (2), it is clear that if $h_x$ is tangent to $\C_1$, then $\Phi^{(1)}(h_x)$ is tangent to $\C_2$, so there is a map $\Phi^{(1)}\: J^1_{\C_1} P_1\to J^1_{\C_2}P_2$, which is a morphism of Pfaffian fibrations by (1).
	
	Item (3) is more tricky because a direct proof using the curvatures typically involves $\Phi$-related sections of $\C_1$ and $\C_2$, which might not exist. Instead, we can use the interaction between Pfaffian fibrations and relative algebroids to apply what we know already for relative algebroids. 
	
	First, Proposition \ref{prop:NaturalityOfPfaffianRelativeAlgebroid} implies that $(\Phi, \varphi)$ induces a morphism of the relative algebroids from $(\pi_1^*TX_1, \onabla, \D^{\C_1})$ to $(\pi_2^*TX_2, \onabla, \D^{\C_2})$ that is \emph{fiberwise an isomorphism}. By Proposition \ref{prop:NaturalityOfProlongationsOfRelativealgebroids}, the morphism differentiates to a map between the prolongations $\Phi^{(1)}\: P_1^{(1)}\to P_2^{(1)}$. By Theorem \ref{thm:PfaffianFibrationsAsRelativeAlgebroids}, the prolongations of the relative algebroids agree with the prolongations of the Pfaffian fibrations, so indeed $\Phi^{(1)}$ restricts to a map
	\[
		(\Phi^{(1)}, \varphi)\: P^{(1)}_1\to P^{(1)}_2.
	\] 
	In case that both $P_1^{(1)}$ and $P_2^{(1)}$ are smooth, this is a morphism of Pfaffian fibrations by (2). 
\end{proof}

\section{Pfaffian groupoids}\label{sec:PfaffianGroupoids}

Next, we recall the basic notions of Pfaffian groupoids, introduced in \cite{Salazar2013}. In Section \ref{sec:PartialActionsAndPfaffianGroupoids}, we explain how (partial) action groupoids of a Pfaffian groupoid again a Pfaffian groupoid.

\subsection{Pfaffian groupoids}
A Pfaffian groupoid is a Lie groupoid that is also a Pfaffian fibration which is multiplicative in the following sense.
\begin{definition}[\cite{Salazar2013}, Definition 6.1.4]\label{def:PfaffianGroupoid}
    A \textbf{Pfaffian groupoid} $(\GG, \HH)\toto M$ consists of a Lie groupoid $\GG\toto M$ with a multiplicative distribution $\HH\subset T\GG$ for which the source map $s\: (\GG, \HH)\to M$ is a Pfaffian fibration.  
\end{definition}

A local bisection $\sigma\in \Bisloc(\GG)$ of a Pfaffian groupoid $(\GG, \HH) \toto M$ is \textbf{holonomic} when $\im T\sigma \subset \HH$. The collection of local holonomic bisections $\Bisloc(\GG, \HH)$ form what's called a \textbf{generalized pseudogroup}, meaning that it is closed under restriction, inversion and composition, and satisfies a gluing axiom.

\begin{example}[Jet groupoids]\label{ex:JetGroupoids}
    The first jet $J^1\GG\toto M$ of a groupoid $\GG\toto M$ consists of first jets of local bisections of $\GG$. The composition is induced by composition of bisections:
    $j^1_{y}\tau\cdot j^1_x\sigma= j^1_x(\tau\cdot \sigma)$. Alternatively, $J^1\GG$ consists of pointwise splittings $h\: T_{s(g)}M\to T_g\GG$ of $T_gs$ for which $T_g t \circ h$ is an isomorphism. There is a natural groupoid projection $p_1\: J^1\GG\to \GG$ that sends $j^1_x \sigma$ to $\sigma(x)$. Note that $J^1\GG$ is an open subset of $J^1s$.

    The first jet groupoid $J^1\GG\toto M$ is naturally a Pfaffian groupoid $(J^1\GG, \HH^1)\toto M$ with the Pfaffian distribution $\HH^1$ given by the Cartan distribution on $J^1s$ (Example \ref{ex:CartanForm}) restricted to $J^1\GG$. Concretely, 
    \begin{equation}\label{eq:CharacterizationCartanDistributionJetGroupoid}
    	X\in \HH^1 \mbox{ above $j^1_x\sigma$} \iff Tp_1(X) = T_x\sigma \circ Ts(X).
    \end{equation}
    
    The higher jet groupoids of a Lie groupoid $\GG\toto M$ are also naturally Pfaffian groupoids
    \[
    	(J^k\GG, \HH^k)\toto M
    \]
    by restricting the Cartan distribution on $J^ks$ to $J^k \GG\subset J^ks$. 
\end{example}

\begin{example}[Jets of local diffeomorphisms]\label{ex:CartanDistributionDiffeomorphisms} Let $\Diffloc(M)$ be the pseudogroup of local diffeomorphisms on $M$. Any local diffeomorphisms $\varphi$ can be regarded as local bisections of the pair groupoid $M\times M\toto M$, through $(\varphi, \id)\: M\to M\times M$. This way, we get an identification
\[
	J^1\Diffloc(M) \cong J^1(M\times M)
\]
and so we can regard $(J^1\Diffloc(M), \HH^1)\toto M$ as a Pfaffian groupoid. In this case, purely in terms of diffeomorphisms, the Cartan distribution can be characterized as
\begin{equation}\label{eq:CharacterizationCartanDistributionDiffeomorphisms}
	X\in \HH^1 \mbox{ above $j^1_x\varphi$} \iff Tt(X) = T_x\varphi \circ Ts(X),
\end{equation}
with $j^1_x\varphi\in J^1\Diffloc(M)$. 

Often, the diffeomorphisms preserving a geometric structure are given by a PDE $\GG\subseteq J^k\Diffloc(M)$. By restricting the Cartan distribution $\HH^k$ to $\GG$, we obtain a Pfaffian groupoid $(\GG, \HH^k)\toto M$ whose holonomic bisections correspond to the symmetries of the geometric structure.
\end{example}

\subsection{Prolongations of Pfaffian groupoids}\label{subsec:ProlongationOfPfaffianGroupoids}
As for Pfaffian fibrations, there is a one-form (with coefficients, now multiplicative) on $\GG$ whose kernel is exactly $\HH$, given by the projection
\[
	\omega_\HH\: T\GG\to T\GG/\HH.
\] 
The \textbf{partial prolongation} of a Pfaffian groupoid $(\GG, \HH)\toto M$ is
\[
	J^1_\HH\GG = \{ h\in J^1\GG \ | \ h^*\omega_\HH = 0\}.
\]
Equipped with the Cartan distribution $\HH^1$ inherited from $J^1\GG$, we obtain a new Pfaffian groupoid $(J^1_\HH\GG, \HH^1)\toto M$. 

The \textbf{prolongation} of $(\GG, \HH)\toto M$ consists of those elements with vanishing curvature:
\[
	\GG^{(1)} = \{ h \in J^1\GG\ | \ h^*\omega_\HH =0, \ h^*\kappa_\HH =0\}
\]
If smooth, it becomes a Pfaffian groupoid $(\GG^{(1)}, \HH^1)\toto M$ when equipped with the Cartan distribution $\HH^1$ restricted from $J^1\GG$.

Like for Pfaffian fibrations, we can talk about $k$-integrability and formal integrability for Pfaffian groupoids.

The \textbf{reduced Pfaffian groupoid} of $(\GG, \HH)$ is the locus where the intrinsic torsion vanishes (cf. Remark \ref{rk:reducedFibrationIntrinsicTorsion}). It can also be defined as the image of the prolongation space under the projection
\[
	\overline{\GG} = p_1(\GG^{(1)}).
\]
If smooth, we obtain a new Pfaffian groupoid $(\overline{\GG}, \HH)\toto M$. If $\GG^{(1)}$ is in addition smooth, then $(\overline{\GG}, \HH)\toto M$ is 1-integrable, even though $(\GG, \HH)$ may not be.

\begin{remark}
    We will occasionally work with the first prolongation space $\GG^{(1)}$ of a Pfaffian groupoid even when it is not smooth. Notably, we will use the first prolongation of the groupoid of Pfaffian symmetries, introduced later, that is not assumed to be smooth.
\end{remark}

\subsection{Morphisms of Pfaffian groupoids}

While it is possible to consider more general morphisms of Pfaffian groupoids over different bases, the following notion is sufficient for the purposes of this paper.

\begin{definition}
    A \textbf{morphism of Pfaffian groupoids} (covering the identity) from $(\GG_1, \HH_1)\toto M$ to $(\GG_2, \HH_2)\toto M$ is a groupoid morphism $\Phi\:\GG_1\to \GG_2$ covering the identity such that $T\Phi(\HH_1)\subseteq \HH_2$. 
\end{definition}

\begin{lemma}\label{lem:MorphismsOfPfaffianGroupoids}
    The following are true.
    \begin{enumerate}
        \item If $\Phi\: \GG_1\to \GG_2$ is a morphism of groupoids (covering the identity), then $\Phi_*\: (J^1\GG_1, \HH^1_1) \to (J^1\GG_2, \HH^1_2)$ is a morphism of Pfaffian groupoids.
        \item If $\Phi\: (\GG_1, \HH_1)\to (\GG_2, \HH_2)$ is a morphism of Pfaffian groupoids, then $\Phi^{(1)}\: J^1\GG_1\to J^1\GG_2$ restricts to the partial prolongations
        \[
            \Phi^{(1)}\: (J^1_{\HH_1}\GG_1, \HH^1_1)\to (J^1_{\HH_2} \GG_2, \HH^1_2)
        \]
        as a morphism of Pfaffian groupoids.
        \item The map $\Phi^{(1)}$ also restricts to the prolongation spaces
        \[
            \Phi^{(1)}\: \GG_1^{(1)}\to \GG_2^{(1)}
        \]
        which, when both $(\GG_1, \HH_1)$ and $(\GG_2, \HH_2)$ are 1-integrable, is a morphism of Pfaffian groupoids.
    \end{enumerate}
\end{lemma}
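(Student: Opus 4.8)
The plan is to follow the same strategy used for Pfaffian fibrations in Lemma~\ref{lem:MorphismsOfPfaffianFibrations}, reducing each item to the corresponding statement about ordinary Pfaffian fibrations and using the multiplicativity of the constructions to promote them to morphisms of Pfaffian groupoids. Recall that a Pfaffian groupoid is a Lie groupoid for which $s$ is a Pfaffian fibration and the distribution is multiplicative; a morphism of Pfaffian groupoids is simultaneously a groupoid morphism and a morphism of the underlying Pfaffian fibrations (over the identity on the base, so the covered map $\varphi$ is just $\id_M$, in particular a local diffeomorphism).

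For item~(1), I would first observe that $J^1s_i \supseteq J^1\GG_i$ as an open subset, and that the Cartan distribution $\HH^1_i$ on $J^1\GG_i$ is the restriction of the Cartan distribution on $J^1 s_i$. Since $\Phi$ covers $\id_M$, the map $\Phi^{(1)}\: J^1 s_1 \to J^1 s_2$, $h_x \mapsto T_x\Phi\circ h_x$, is a morphism of Pfaffian fibrations by Lemma~\ref{lem:MorphismsOfPfaffianFibrations}(1) (the covered base map being the identity, trivially a local diffeomorphism). One then checks that $\Phi^{(1)}$ restricts to $J^1\GG_1\to J^1\GG_2$, which is immediate because $\Phi$ sends local bisections to local bisections (equivalently, $T\Phi$ maps splittings $h$ with $T t_1\circ h$ an isomorphism to splittings with $Tt_2\circ T\Phi\circ h$ an isomorphism, using that $\Phi$ intertwines the targets). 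Multiplicativity of $\HH^1$ on $J^1\GG_i$ together with the fact that $\Phi^{(1)}= \Phi_*$ is a groupoid morphism (as $j^1_y\tau\cdot j^1_x\sigma\mapsto j^1_x(\tau\cdot\sigma)$ is compatible with $\Phi$) then gives that $\Phi_*$ is a morphism of Pfaffian groupoids.

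For item~(2), by definition $J^1_{\HH_i}\GG_i = \{ h\in J^1\GG_i \ | \ h^*\omega_{\HH_i}=0\}$, which under the identification with splittings is exactly the set of $h$ that are tangent to $\HH_i$. Since $\Phi\: (\GG_1,\HH_1)\to(\GG_2,\HH_2)$ satisfies $T\Phi(\HH_1)\subseteq\HH_2$, if $h_x$ is tangent to $\HH_1$ then $\Phi^{(1)}(h_x)=T_x\Phi\circ h_x$ is tangent to $\HH_2$, so $\Phi^{(1)}$ restricts to $J^1_{\HH_1}\GG_1\to J^1_{\HH_2}\GG_2$; it is a subgroupoid because the partial prolongation is a subgroupoid (holonomicity is preserved under composition of bisections) and $\Phi^{(1)}$ is a groupoid morphism. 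That it is a morphism of Pfaffian fibrations follows from item~(1) by restriction, as in Lemma~\ref{lem:MorphismsOfPfaffianFibrations}(2).

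For item~(3), I would imitate Lemma~\ref{lem:MorphismsOfPfaffianFibrations}(3): a direct argument with the curvatures $\kappa_{\HH_i}$ would require $\Phi$-related sections of $\HH_1$ and $\HH_2$, which need not exist, so instead I appeal to the algebroid picture. A Pfaffian groupoid is in particular a Pfaffian fibration, and $\Phi$ is in particular a morphism of Pfaffian fibrations which is fiberwise an isomorphism (since it covers $\id_M$, the induced bundle map $\Phi_*\: s_1^*TM\to s_2^*TM$ is $T\id_M=\id$ on fibers). By Proposition~\ref{prop:NaturalityOfPfaffianRelativeAlgebroid} it induces a morphism of the underlying relative algebroids that is fiberwise an isomorphism, and by Proposition~\ref{prop:NaturalityOfProlongationsOfRelativealgebroids} this differentiates to a map between prolongations; by Theorem~\ref{thm:PfaffianFibrationsAsRelativeAlgebroids}, the prolongations of the relative algebroids coincide with the prolongations of the Pfaffian fibrations, so $\Phi^{(1)}$ restricts to $\GG_1^{(1)}\to\GG_2^{(1)}$. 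It is a groupoid morphism because $\GG_i^{(1)}\subseteq J^1\GG_i$ is a subgroupoid and $\Phi^{(1)}=\Phi_*$ respects composition. Finally, when both $(\GG_1,\HH_1)$ and $(\GG_2,\HH_2)$ are $1$-integrable the prolongation spaces are smooth Pfaffian groupoids, and the restriction of $\Phi^{(1)}$ is a morphism of Pfaffian fibrations by item~(2), hence a morphism of Pfaffian groupoids. I expect the only genuine subtlety to be item~(3)—specifically, checking that the abstract prolongation map provided by Proposition~\ref{prop:NaturalityOfProlongationsOfRelativealgebroids} agrees, under the identifications of Theorem~\ref{thm:PfaffianFibrationsAsRelativeAlgebroids}, with the naive map $h_x\mapsto T_x\Phi\circ h_x$ on $J^1s$, so that it manifestly restricts to a groupoid morphism; but this compatibility already underlies the proof of Lemma~\ref{lem:MorphismsOfPfaffianFibrations}(3) and can be quoted.
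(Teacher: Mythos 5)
Your proof is correct and follows essentially the same route as the paper, which simply observes that the lemma is the content of Lemma~\ref{lem:MorphismsOfPfaffianFibrations} restricted to the (partial) prolongations of the Pfaffian groupoids. The extra details you supply (that $\Phi^{(1)}$ restricts from $J^1s_1$ to the open subset $J^1\GG_1$, and that it is a groupoid morphism because it is induced by composition of bisections) are exactly the routine verifications the paper leaves implicit.
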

\begin{proof}
	This is the content of Lemma \ref{lem:MorphismsOfPfaffianFibrations}, restricted to the (partial) prolongation of the Pfaffian groupoids.
\end{proof}

\subsection{Partial actions and Pfaffian groupoids}\label{sec:PartialActionsAndPfaffianGroupoids}

In Section \ref{sec:ActionsByPfaffianGroupoidsOnPfaffianFibrations}, we will encounter natural groupoid actions that are \textit{partial}, as opposed to being complete. Partial actions are different from local actions--partial refers to the space the groupoid acts on, not to the groupoid itself. Every partial action still has a well-defined action groupoid. We spend a couple of words on partial actions of groupoids, as discussed in \cite{AnantharamanDelaroche2020, MarinPinedoRodrigue2025}.
\begin{definition}\label{def:PartialAction}
	A \textbf{partial action} of a Lie groupoid $\GG\toto M$ on a map $\mu\: P\to M$ (denoted as $m\:\GG \paction P$) consists of a local map $m\: \GG\leftindex_{s}{\times}_\mu P \supseteq \dom m \to P$, $(g, p) \mapsto g\cdot p$, satisfying the following properties:
	\begin{enumerate}
		\item For all $x\in P$, $(u(\mu(x)), x)\in \dom m$ and $u(\mu(x))\cdot x = x$, where $u\: M\to \GG$ is the unit map.
		\item If $(g, x)\in \dom m$, then $(g^{-1}, g\cdot x)\in \dom m$.
		\item If $(g_0, x), (g_1, g_0\cdot x)\in \dom m$, then $(g_1g_0, x)\in \dom m$ and $(g_1g_0)\cdot x = g_1 \cdot (g_0\cdot x)$.
	\end{enumerate}
\end{definition}
The domain of the multiplication $m$ is assumed to be open (hidden in the notion of a local map). The conditions imply that $\GG\pltimes P := \dom m\toto P$ is a Lie groupoid, which we call the \textbf{(partial) action groupoid}.

Perhaps surprisingly, the partial action groupoid $\GG\pltimes P \toto P$ is a \emph{global} Lie groupoid, and not just a local one. A partial action is distinct from a \emph{local action} of a Lie groupoid, where the locality is concentrated around the unit section of the groupoid (which would give rise to a local action groupoid).

If $\Phi\: \GG_1 \to \GG_2$ is a Lie group morphism and  $\GG_2\paction P$ is a partial action, then it induces naturally a partial action $\GG_1 \paction P$.

In Section \ref{sec:ActionsByPfaffianGroupoidsOnPfaffianFibrations} we will encounter partial actions of Pfaffian groupoids. If $(\GG, \HH)\toto M$ is a Pfaffian groupoid and $\GG\paction P$ a partial action along $\mu\: P\to M$, then we can pull back the distributions $\HH\subset T\GG$ to the action groupoid $\GG\pltimes P$ along the projection $\pr_\GG\: \GG\pltimes P \to \GG$. Concretely,
\[
\pr_\GG^*\HH = \left\{ (X, Y)\in T\GG\leftindex_{Ts}{\times}_{T\mu} TP\big\vert_{\GG\pltimes P} \ \big\vert \ X\in \HH \right\}\subset T(\GG\pltimes P).
\]
If $\omega_\HH$ is the form corresponding to $\HH$, note that $\pr_\GG^*\HH = \ker \pr_\GG^*\omega_\HH$. After (canonically) identifying $T(\GG\pltimes \HH)/ \pr_\GG^*\HH\cong \pr^*_\GG\left(T\GG/\HH\right)$, it actually holds that
\[
	\pr^*_\GG\omega_\HH = \omega_{\pr^*_\GG \HH}.
\]

\begin{lemma}\label{lem:ActionPfaffianGroupoid}
	Let $\GG\toto M$ be a groupoid with a partial action on $P$ along $\mu\: P\to M$.
	\begin{enumerate}
		\item If $(\GG, \HH)\toto M$ is a Pfaffian groupoid, then $(\GG\pltimes P, \pr_\GG^*\HH)\toto P$ is a Pfaffian groupoid.
		
		\item The canonical immersion $((J^1\GG)\pltimes P, \pr_{J^1\GG}^*\HH^1)\to (J^1(\GG\pltimes P), \HH^1)$ is a morphism of Pfaffian groupoids.
		\item If $(\GG, \HH)\toto M$ is a Pfaffian groupoid, then there is a morphisms of Pfaffian groupoids
		\[
		((J^1_{\HH}\GG)\pltimes P, \pr^*_{J^1_\HH\GG}\HH^1)\to (J^1_{\pr^*_\GG\HH}(\GG\pltimes P), \HH^1),
		\]
		\item There is a canonical morphism
		\[
		\GG^{(1)}\pltimes P\to (\GG\pltimes P)^{(1)}
		\]
		which is a morphism of Pfaffian groupoids when both $(\GG, \HH)$ and $(\GG\pltimes P, \pr_\GG^*\HH)$ are 1-integrable.
	\end{enumerate}
\end{lemma}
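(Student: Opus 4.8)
The plan is to treat part (1) as the substantive claim and to deduce parts (2)--(4) by constructing a single canonical map and restricting it in turn to the partial prolongations and the prolongations. Throughout, the source and target of the action groupoid $\GG\pltimes P$ are $\pr_P\colon(g,p)\mapsto p$ and $(g,p)\mapsto g\cdot p$, and I will use the identities $\mu(g\cdot p)=t(g)$ and $\pr_\GG^*\omega_\HH=\omega_{\pr_\GG^*\HH}$ recorded before the lemma.

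For part (1) I would check the two Pfaffian axioms for $\pr_P$ and then multiplicativity. Transversality: given $(X,Y)\in T(\GG\pltimes P)\subseteq T\GG\times_{Ts,T\mu}TP$, split $X=X_\HH+X_0$ with $X_\HH\in\HH$ and $X_0\in\ker Ts$ (transversality of $(\GG,\HH)$); then $(X_\HH,Y)\in\pr_\GG^*\HH$ and $(X_0,0)\in\ker T\pr_P$. For $\pr_P$-involutivity, note that $\pr_\GG^*\HH\cap\ker T\pr_P=(\HH\cap\ker Ts)\times\{0\}$ is tangent to the fibres of $\pr_P$ (which are source-fibres of $\GG$) and restricts on each of them to the involutive distribution $\HH\cap\ker Ts$; a distribution tangent to a foliation that is leafwise involutive is involutive. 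For multiplicativity I would show $\pr_\GG^*\HH$ is a subgroupoid of the tangent groupoid $T(\GG\pltimes P)\toto TP$: since the multiplication of $\GG\pltimes P$ is $((g_1,p_1),(g_0,p_0))\mapsto(g_1g_0,p_0)$, it sends $((X_1,Y_1),(X_0,Y_0))$ to $(Tm_\GG(X_1,X_0),Y_0)$, and the compatibilities defining $T(\GG\pltimes P)$ and $\pr_\GG^*\HH$ together with $\mu(g\cdot p)=t(g)$ force $(X_1,X_0)$ to be a composable pair in $T\GG$ with $X_i\in\HH$, so $Tm_\GG(X_1,X_0)\in\HH$ by multiplicativity of $\HH$; closure under units and inversion is analogous. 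I expect this multiplicativity verification to be the main obstacle --- not because any single step is hard, but because it demands careful bookkeeping of all the source/target/moment-map compatibilities of the action groupoid.

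For part (2), the canonical map sends $(j^1_{\mu(p)}\sigma,p)$ to $j^1_p(\sigma\circ\mu,\id)$, i.e.\ on splittings $h\mapsto\bigl(Y\mapsto(h(T_p\mu\,Y),Y)\bigr)$; it is an immersion precisely when $\mu$ is a submersion, which is the case in all our applications. I would verify it is a groupoid morphism over $\id_P$ by expanding the composition of bisections on both sides (using that $(\sigma_1\sigma_0)\circ\mu$ is the composite of the bisections $(\sigma_1\circ\mu,\id)$ and $(\sigma_0\circ\mu,\id)$), and that it carries $\pr_{J^1\GG}^*\HH^1$ into $\HH^1$ via the Cartan characterization \eqref{eq:CharacterizationCartanDistributionJetGroupoid}: for $(X,W)$ with $X\in\HH^1$ over $j^1_x\sigma$ one has $Tp_1^\GG(X)=T_x\sigma\circ Ts_{J^1\GG}(X)=T_x\sigma\circ T\mu(W)$, which is exactly the Cartan condition for the image jet once $Ts_{J^1(\GG\pltimes P)}$ is written through $T\pr_P$ and $T(\sigma\circ\mu,\id)$.

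Finally, parts (3) and (4) follow by restricting the map of part (2). From $\pr_\GG^*\omega_\HH=\omega_{\pr_\GG^*\HH}$, a bisection $(\beta_\GG,\id)$ of $\GG\pltimes P$ is tangent to $\pr_\GG^*\HH$ iff $\beta_\GG\colon P\to\GG$ is tangent to $\HH$; since $\sigma$ holonomic makes $\sigma\circ\mu$ tangent to $\HH$, the map sends $(J^1_\HH\GG)\pltimes P$ into $J^1_{\pr_\GG^*\HH}(\GG\pltimes P)$, proving (3). Taking $\d$ of the same identity gives $\kappa_{\pr_\GG^*\HH}=\pr_\GG^*\kappa_\HH$ under the canonical identification $T(\GG\pltimes P)/\pr_\GG^*\HH\cong\pr_\GG^*(T\GG/\HH)$, so for $h\in\GG^{(1)}$ the image $\overline{(h,p)}$ satisfies $\overline{(h,p)}^*\kappa_{\pr_\GG^*\HH}(Y_1,Y_2)=h^*\kappa_\HH(T_p\mu\,Y_1,T_p\mu\,Y_2)=0$; hence the map sends $\GG^{(1)}\pltimes P$ into $(\GG\pltimes P)^{(1)}$, proving (4). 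In both cases, once $1$-integrability makes the relevant target prolongation a smooth Pfaffian groupoid --- the domains $(J^1_\HH\GG)\pltimes P$ and $\GG^{(1)}\pltimes P$ being Pfaffian groupoids by part (1) applied to $J^1_\HH\GG$ resp.\ $\GG^{(1)}$ --- the restricted maps are morphisms of Pfaffian groupoids by part (2).
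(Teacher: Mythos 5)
Your proposal is correct and follows essentially the same route as the paper: the same transversality and $s$-involutivity checks for (1), the same canonical map $(j^1_{\mu(p)}\sigma,p)\mapsto j^1_p(\sigma\circ\mu,\id)$ with the same Cartan-distribution computation for (2), and the same restrictions via the identities $\pr_\GG^*\omega_\HH=\omega_{\pr_\GG^*\HH}$ and $\kappa_{\pr_\GG^*\HH}=\pr_\GG^*\kappa_\HH$ for (3) and (4). The only cosmetic differences are that you spell out the multiplicativity of $\pr_\GG^*\HH$ (which the paper treats as routine) and obtain the curvature identity by differentiating the form identity, where the paper instead chooses $\pr_\GG$-related extensions of sections of $\HH$.
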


\begin{proof}
	Let us start with (1). The distribution $\pr^*_\GG \HH\subset T(\GG\pltimes P)$ is transverse to the source fibers of $\GG\pltimes P$ because $\HH$ is. The distribution $(\pr^*_\GG \HH)^s $ is given by $\HH^s\leftindex_{Ts}{\times}_{T\mu} 0\vert_{\GG\pltimes P}$, which is involutive, so $(\GG\pltimes P, \pr^*_\GG \HH)\toto P$ is a Pfaffian groupoid.
	
	For (2), the morphism $i\: (J^1\GG)\pltimes P\to J^1(\GG\pltimes P)$ is the first jet of the (partially defined) map $\mu^*\: \Bisloc(\GG)\dasharrow \Bisloc(\GG\pltimes P)$, with $\mu^*\sigma(x) = (\sigma(\mu(x)), x)$. Concretely, on the level of first jets, it is given by $i(j^1_{\mu(x)}\sigma, x) = j^1_x(\mu^*\sigma)$. This is well-defined, because $(j^1_{\mu(x)}\sigma, x)\in (J^1\GG)\pltimes P$ if and only if $(\sigma(\mu(x)), x)\in \GG\pltimes P$. The following diagram commutes:
	\[
	\begin{tikzcd}
		(J^1\GG)\pltimes P \arrow[rr, "i"] \arrow[dr, "{(p_1, \id)}"'] & & J^1(\GG\pltimes P) \arrow[dl, "p_1"] \\
		& \GG\pltimes P,  &
	\end{tikzcd}
	\]
	so, if $(X, Y)\in \pr_\GG^*\HH$ above $(j^1_{\mu(x)}\sigma, x)$, then
	\begin{align*}
		Tp_1 Ti(X, Y) &= (Tp_1(X), Y) = (T\sigma Ts(X), Y) = (T\sigma T\mu(Y), Y) = T(\mu^*\sigma) Ts(X, Y),
	\end{align*}
	which shows by Equation (\ref{eq:CharacterizationCartanDistributionJetGroupoid}) that $Ti(X, Y)\in \HH^1$. We conclude that the inclusion is a morphism of Pfaffian groupoids.
	
	For (3), note that $i$ restricts to $i\: (J^1_\HH\GG)\pltimes P \to J^1_{\pr^*_\GG \HH}(\GG\pltimes P)$ because if $\sigma\in \Bisloc(\GG)$, then $(\mu^*\sigma)^*\pr_\GG^*\omega_\HH = \mu^*\sigma^*\omega_\HH$, so the first jet of $\mu^*\sigma$ is tangent to $\ker \pr_\GG^*\omega_\HH$ whenever $j^1_{\mu(x)}\sigma$ is tangent to $\HH$.
	
	For item (4), the map will restrict to a map between the prolongation spaces if we can show that the curvatures are related through
	\begin{equation}\label{eq:curvaturesOfActionGroupoids}
		\kappa_{\pr^*_\GG \HH} = \pr^*_\GG \kappa_\HH,
	\end{equation}
	where we canonically identify $T(\GG\pltimes \HH)/ \pr_\GG^*\HH\cong \pr^*_\HH\left(T\GG/\HH\right)$. This equation follows from the fact that $T\pr_\GG\: \pr^*_\GG \HH\to \HH$ is surjective. This implies that, given $(X_1, Y_1)\in \pr_\GG^*\HH$, we can find extensions $(\tilde{X}_i, \tilde{Y}_i)\in \Gamma(\pr_\GG^*\HH)$ that are $\pr_\GG$-related to $\tilde{X}_i\in \Gamma(\HH)$. Therefore, the curvatures satisfy:
	\[
	\kappa_{\pr^*_\GG\omega}((X_1, Y_1), (X_2, Y_2)) = \pr^*_\GG\omega_\HH([(\tilde{X}_1, \tilde{Y}_1), (\tilde{X}_2, \tilde{Y}_2)]) = \omega_\HH([\tilde{X}_1, \tilde{X}_2]) = \kappa_\HH(X_1, X_2),
	\]
	so Equation (\ref{eq:curvaturesOfActionGroupoids}) follows. If smooth, the map is a morphism of Pfaffian groupoids by (2).
\end{proof}

\section{Symmetries of Pfaffian fibrations}\label{sec:SymmetriesOfPfaffianFibrations}

We have now arrived at the second part of the paper, where we investigate the symmetries of a Pfaffian fibration, and explain how those symmetries give rise to symmetries of the underlying relative algebroid.

\subsection{Internal symmetries of Pfaffian fibrations} The most general notion of symmetry of a Pfaffian fibration is one that preserves the Cartan distribution. More precisely, the \textbf{pseudogroup of internal symmetries} of a Pfaffian fibration $(P, \C, \pi)$ is given by
\[
	\Diffloc(P, \C) := \left\{ \varphi\in \Diffloc(P) \ | \ T\varphi (\C) \subseteq \C\right\}.
\]
Here, $\Diffloc(P)$ is the pseudogroup of local diffeomorphisms on $P$. If $(P, \C, \pi)$ is comes from a PDE (Example \ref{ex:CartanForm}), this notion corresponds to the notion of \emph{internal symmetries} for PDEs \cite{AndersonKamranOlver1993}.

An internal symmetry $\varphi$ of $(P, \C, \pi)$ sends solutions to solutions, but only when it well-defined. This is already visible at the level of first jets. The \textbf{prolongation} of an internal symmetry is the local diffeomorphism
\[
	\varphi^{(1)}\: J^1_\C P\dasharrow J^1_\C P, \quad \varphi^{(1)}\: P^{(1)}\to P^{(1)}
\]
given by $\varphi^{(1)} (H_x) = T_x\varphi(H_x)$ for $H_x\subset \C_x$ with $H_x\oplus \C^\pi_x = \C_x$. This is only well-defined when $T_x\varphi(H_x) + \C^\pi_{\varphi(x)} = \C_{\varphi(x)}$. If instead we identity elements of $J^1_\C P$ with splittings $h_x\: T_{\pi(x)}\to \C_x$, then 
\[
	\varphi^{(1)}(h_x)= T_x\varphi \circ h_x \circ \left( T_{\varphi(x)}\pi\circ T_x\varphi \circ h_x \right)^{-1}.
\]  
The domain of $\varphi^{(1)}$, given by the subset where $T\pi\circ T\varphi \circ h$ is invertible, is open and dense over the domain of $\varphi$ (if $(P, \C, \pi)$ is involutive, this is also true for $\varphi^{(1)}$ restricted to $P^{(1)}$). 

\subsection{Pfaffian symmetries of Pfaffian fibrations}\label{subsec:SymmetriesOfPfaffianFibrations}

A \textbf{Pfaffian symmetry} of a Pfaffian fibration $(P, \C, \pi)$ is an internal symmetry that also preserves $\C^\pi$. They form the \textbf{pseudogroup of Pfaffian symmetries}, given by
\[
    \Diffloc(P, \C, \pi) := \left\{ \varphi\in \Diffloc(P) \ \big\vert \  T\varphi(\C) \subseteq \C, \ T\varphi(\C^\pi)\subseteq \C^\pi \right\}.
\]

\begin{remark}
	Note that elements of $\Diffloc(P, \C, \pi)$ need not necessarily preserve $\ker T\pi$ and therefore may not cover a local diffeomorphism on the base $X$. Therefore, a local symmetry of a Pfaffian fibration is \textit{not necessarily} a morphism of Pfaffian fibrations in the sense of Definition \ref{def:morphismOfPfaffianFibration}.
\end{remark}

Since a local symmetry $\varphi\in \Diffloc(P, \C, \pi)$ is in particular an internal symmetry, it prolongs to a local diffeomorphism
\[
	\varphi^{(1)}\: J^1_\C P \dasharrow J^1_\C P, \quad \varphi^{(1)}\: P^{(1)}\dasharrow P^{(1)},
\]
but this time, since the $\varphi$ also preserves $\C^\pi$, the transversality condition $T_x\varphi(H_x) + \C^\pi_{\varphi(x)}= \C_{\varphi(x)}$ is automatically satisfied, so the domain of $\varphi^{(1)}$ is exactly $J^1_\C P$ restricted to the domain of $\varphi$.

The following result relates internal symmetries to Pfaffian symmetries of the prolongation.
\begin{theorem}\label{thm:InternalAndPfaffianSymmetries}
	Let $(P, \C, \pi)$ be a Pfaffian fibration.
	\begin{enumerate}
		\item If $\varphi\in \Diffloc(P, \C)$ is an internal symmetry, then $\varphi^{(1)}\: J^1_\C P \dasharrow J^1_\C P$ is a Pfaffian symmetry of $(J^1_\C P, \C^1, \pi^{(1)})$.  
		\item If $(P, \C, \pi)$ is 1-integrable, then $\varphi^{(1)}\: P^{(1)}\dasharrow P^{(1)}$ is also a Pfaffian symmetry. 
		\item If $\tilde{\varphi}$ is a Pfaffian symmetry of $J^1_\C P$, then it is locally the prolongation of an internal symmetry of $(P, \C, \pi)$.
		\item If $(P, \C, \pi)$ is 1-integrable with \emph{involutive} tableau map, then every Pfaffian symmetry $\tilde{\varphi}$ of $(P^{(1)}, \C^1, \pi^{(1)})$ is locally the prolongation of an internal symmetry of $(P, \C, \pi)$. 
	\end{enumerate}
\end{theorem}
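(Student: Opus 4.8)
The plan is to treat the two directions separately, with everything resting on one description of the essential distributions of a prolongation. Regard $J^1_\C P$ (resp.\ $P^{(1)}$, when $(P,\C,\pi)$ is $1$-integrable) as sitting inside $J^1\pi$; by Example \ref{ex:CartanForm} one has $X\in\C^1$ over $h$ precisely when $Tp_1(X)=h(T\pi^{(1)}(X))$. Two consequences will be used repeatedly: first, $(\C^1)^{\pi^{(1)}}=\ker Tp_1$ is the vertical bundle of the projection $p_1$; second, $Tp_1(\C^1_h)=\im h\subseteq\C$ for every $h$. (I assume $\dim X\ge 1$ throughout; when $\dim X=0$ one has $J^1_\C P=P$ and $\C^1=\C$, and the statement is trivial.)

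\emph{Parts (1) and (2).} Write $\varphi^{(1)}(h_x)=T_x\varphi\circ h_x\circ A^{-1}$ with $A=T(\pi\circ\varphi)|_x\circ h_x$, and note $p_1\circ\varphi^{(1)}=\varphi\circ p_1$ and $\pi^{(1)}\circ\varphi^{(1)}=\pi\circ\varphi\circ p_1$. For $X\in\C^1_{h_x}$ one computes, on the one hand, $Tp_1(T\varphi^{(1)}X)=T\varphi(h_x(T\pi^{(1)}X))$, and, on the other hand, $T\pi^{(1)}(T\varphi^{(1)}X)=A(T\pi^{(1)}X)$ (using $T(\pi\circ\varphi)\circ h_x=A$); applying $\varphi^{(1)}(h_x)=T\varphi\circ h_x\circ A^{-1}$ to the latter reproduces $T\varphi(h_x(T\pi^{(1)}X))$, whence $T\varphi^{(1)}X\in\C^1_{\varphi^{(1)}(h_x)}$. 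So $\varphi^{(1)}$ is an internal symmetry of $(J^1_\C P,\C^1,\pi^{(1)})$, and since it covers $\varphi$ along $p_1$ it also preserves $\ker Tp_1=(\C^1)^{\pi^{(1)}}$; this gives (1). For (2): since $\varphi$ is a diffeomorphism with $T\varphi(\C)\subseteq\C$, it intertwines the Lie brackets of $\C$-tangent vector fields and hence conjugates $\kappa_\C$ at $x$ to $\kappa_\C$ at $\varphi(x)$, so $T_x\varphi$ sends integral elements of $\C$ to integral elements; therefore $\varphi^{(1)}$ maps $P^{(1)}$ into $P^{(1)}$, and the computation above restricts, yielding a Pfaffian symmetry of $(P^{(1)},\C^1,\pi^{(1)})$ on the open dense domain of $\varphi^{(1)}$.

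\emph{Parts (3) and (4).} Let $\tilde\varphi$ be a Pfaffian symmetry of $J^1_\C P$ (resp.\ $P^{(1)}$); applying the inclusion $T\tilde\varphi(\C^1)\subseteq\C^1$ to $\tilde\varphi$ and $\tilde\varphi^{-1}$ gives $T\tilde\varphi(\C^1)=\C^1$, and likewise $T\tilde\varphi(\ker Tp_1)=\ker Tp_1$. The latter means $\tilde\varphi$ sends $p_1$-fibres to $p_1$-fibres, so it descends to a local diffeomorphism $\varphi$ of $P$ with $p_1\circ\tilde\varphi=\varphi\circ p_1$. Then, for every $h$ in the domain, $T\varphi(\im h)=T\varphi(Tp_1(\C^1_h))=Tp_1(T\tilde\varphi(\C^1_h))=Tp_1(\C^1_{\tilde\varphi(h)})=\im\tilde\varphi(h)\subseteq\C_{\varphi(p_1(h))}$. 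For (3), letting $h$ range over all of $J^1_\C P$ above a point $x$, the subspaces $\im h$ sweep out a dense subset of $\C_x$, so $T\varphi(\C)\subseteq\C$. For (4), $h$ ranges only over the integral elements $h\in P^{(1)}$ above $x$; here I would invoke the technical result of Appendix \ref{app:TableauMapsAndInvolutivity}, namely that when the tableau map is involutive the union of these integral elements is dense in $\C_x$, which again gives $T\varphi(\C)\subseteq\C$. In either case $\varphi\in\Diffloc(P,\C)$. Finally, for $h$ in the domain, both $\tilde\varphi(h)$ and $\varphi^{(1)}(h)$ are splittings of $\pi$ with image $T\varphi(\im h)$ — for $\varphi^{(1)}$ this is the definition, $\varphi^{(1)}$ being defined at $h$ because $T\varphi(\im h)=\im\tilde\varphi(h)$ is complementary to $\C^\pi$ and, in the $1$-integrable case, $\varphi^{(1)}$ preserves $P^{(1)}$ by (2) — and a splitting of a submersion is determined by its image; hence $\tilde\varphi(h)=\varphi^{(1)}(h)$, so $\tilde\varphi$ is locally the prolongation $\varphi^{(1)}$.

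The main obstacle is the density input in (4): for a general $1$-integrable Pfaffian fibration the integral elements above a point need not be dense in $\C$, and without that one cannot recover $\C$ — hence the internal symmetry $\varphi$ — from $\C^1$. This is exactly the point that has to be isolated in Appendix \ref{app:TableauMapsAndInvolutivity} via regular flags and Cartan characters of tableau maps, and it is where involutivity is genuinely used. The remaining ingredients are routine: the cancellation identity in (1), the intrinsic conjugation of $\kappa_\C$ in (2), the descent along $p_1$ in (3)--(4), and the elementary fact that a splitting of a submersion is determined by its image; one should also keep track of the open dense domains of the partially defined prolongations.
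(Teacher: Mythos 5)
Your overall strategy coincides with the paper's: parts (1)--(2) via the identity $\C^1_h=(Tp_1)^{-1}(\im h)$ together with the covering relation $p_1\circ\varphi^{(1)}=\varphi\circ p_1$, and parts (3)--(4) by descending $\tilde\varphi$ along $p_1$ to a local diffeomorphism $\varphi$ of $P$, showing $T\varphi(\im h)=\im\tilde\varphi(h)\subseteq\C$, and then recovering $T\varphi(\C)\subseteq\C$ from the fact that the fibres of the (partial) prolongation see enough of $\C_x$. Parts (1) and (2) are correct as written; your observation that $\varphi$ conjugates $\kappa_\C$ and hence preserves integral elements is exactly the content behind the paper's terse ``follows by restriction'', and the final identification $\tilde\varphi=\varphi^{(1)}$ via ``a splitting of $T\pi$ is determined by its image'' also matches the paper.

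The gap is the density claim in (3) and (4). A Pfaffian symmetry $\tilde\varphi$ is only a \emph{local} diffeomorphism, so over a point $x$ you control $\im h$ only for $h$ in an open subset $U_x$ of the fibre, not for the whole fibre as your phrasing assumes. For such a $U_x$ the union $\bigcup_{h\in U_x}\im h$ is in general \emph{not} dense in $\C_x$: fixing $\tilde h\in U_x$, every element of the union has the form $\tilde h(v)+\xi(v)$ with $\|\xi\|$ small, hence lies in a cone $\{\tilde h(v)+u:\ \|u\|\leq \varepsilon\|v\|\}$ that misses a neighbourhood of every nonzero point of $\C^\pi_x$. What is true---and all that is needed, since $T_x\varphi$ is linear and $\C_{\varphi(x)}$ is a linear subspace---is that these images \emph{span} $\C_x$: one writes $\C_x=\im\tilde h\oplus\C^\pi_x$ and realizes each $Y\in\C^\pi_x$ as $(\tilde h+\xi)(X)-\tilde h(X)$ for a small $\xi$ and a suitably rescaled $X$ with $\xi(X)=Y$. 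In (4) one needs additionally that such $\xi$ can be chosen inside the prolonged tableau $(\C^\pi_x)^{(1)}$; the appendix does not provide a density statement but rather (Proposition \ref{prop:CartansBoundTableauMaps}) that involutivity yields a basis vector $e_1$ for which $\iota_{e_1}\:(\C^\pi)^{(1)}\to\C^\pi$ is surjective, which is precisely what makes the span argument go through on the affine fibre $(P^{(1)})_x$. So the route is right, but you should replace ``dense'' by ``spanning'' and run the argument on an arbitrary open subset of the fibre.
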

\begin{proof}
	The prolongation $\varphi^{(1)}\: J^1_\C P \dasharrow J^1_\C P$ covers the internal symmetry $\varphi$, i.e. $p_1\circ \varphi^{(1)} = \varphi \circ p_1$, so it preserves $\ker Tp_1 = (\C^1)^{\pi^{(1)}}$. It remains to show that $\varphi^{(1)}$ also preserves $\C^1$. The Cartan distribution above $H_x\in J^1P$ is given by
	\[
		\C^1_{H_x}= (Tp_1)^{-1}(H_x). 
	\]
	It follows, for $H_x\subseteq \C_x$ in the domain of $\varphi^{(1)}$, that 
	\[
		T\varphi^{(1)}(\C^1_{H_x})=  T \varphi^{(1)}\left( (Tp_1)^{-1}(H_x)\right)= (Tp_1)^{-1}\left( T\varphi(H_x)\right)=  \C^1_{\varphi^{(1)}(H_x)},
	\]
	so $\varphi^{(1)}$ is indeed a Pfaffian symmetry. 
	
	Statement (2) follows immediately from restriction.
	
	For (3), if $\tilde{\varphi}$ is a Pfaffian symmetry of $(J^1_\C P, \C^1, \pi^{(1)})$, then it preserves in particular $\ker Tp_1 = (\C^1)^{\pi^{(1)}}$ so locally it is covered by a diffeomorphism $\varphi$ on $P$: $\varphi\circ p_1 = p_1\circ \tilde{\varphi}$. First, we show that $\varphi$ is an internal symmetry of $(P, \C, \pi)$. We need the following claim:
	\begin{claim*}
		Let $U_x\subset (J^1_\C P)_x$ be any open subset. Then
		\[
			\C_x = \operatorname{span}(H_x \ | \ H_x\in U_x).
		\]
	\end{claim*}
	\begin{proof}[Proof of claim] Recall that $(J^1_\C P)_x \cong \{ h_x\in \Hom(TX, \C_x) \ | \  T\pi\circ h_x = \id_{TX} \}$. Fix $\tilde{h}_x\in U_x$ and set $\tilde{H}_x = \im \tilde{h}_x$. Then $\C_x = \tilde{H}_x \oplus \C^\pi_x$. It remains to show that $\C^\pi_x$ is in the span. For each $Y\in \C^\pi_x$ there exists $\xi_x\in \Hom(TX, \C^\pi_x)$ such that $\tilde{h}_x + \xi_x\in U_x$ and $\xi_x(X) = Y$ for some $X\in TX$. It follows that $Y = (\tilde{h}_x + \xi_x)(X) - \tilde{h}_x(X)$ is in $\operatorname{span}(\im h_x \ | \ h_x\in U_x)$, which proves the claim. \phantom{\qedhere}
	\end{proof}
	The proof of statement (3) proceeds as follows. If $H_x$ is in the domain of $\tilde{\varphi}$, then
	\[
		T\varphi(H_x) = Tp_1 \circ T\tilde{\varphi}(\C^1_{H_x}) = Tp_1(\C_{\tilde{\varphi}(H_x)}) = \tilde{\varphi}(H_x).
	\]
	It follows that $T\varphi$ preserves $\C_x$ (from the claim) and that $\tilde{\varphi}$ coincides with the prolongation of $\varphi$ on its domain.
	
	Statement (4) can be proven in the same way under the condition that a similar claim holds for $P^{(1)}$.
	\begin{claim*}
		Suppose that the tableau map $\tau_x$ is involutive at $x\in P$ (see Appendix \ref{app:TableauMapsAndInvolutivity}) and $U_x$ is an open neighborhood of $(P^{(1)})_x$. Then 
		\[
			\C_{x} = \operatorname{span}(H_x \ | \ H_x\in U_x). 
		\]
	\end{claim*}
	\begin{proof}[Proof of claim]
		This time $(P^{(1)})_x$ is an affine subspace of $\Hom(TX, \C_x)$ modeled on $(\C^\pi_x)^{(1)}$. Fix $\tilde{h}_x\in U_x$. By Proposition \ref{prop:CartansBoundTableauMaps},  there is a basis $(e_k)_k$ for which $\iota_{e_1}\: (\C^{\pi})^{(1)}\to \C^\pi$ is surjective. It follows that for each $Y\in \C^\pi_x$ there is an element $\xi_x\in (\C^\pi_x)^{(1)}$ such that $\xi_x(X) = Y$ for some $X\in TX$ and $\tilde{h}_x + \xi_x\in U_x$. We conclude that $\C_x^\pi$ is contained in the span. Therefore,
		$\C_x = \im \tilde{h}_x \oplus \C^\pi_x \subseteq \operatorname{span}(\im h_x \ | \ h_x\in U_x)$, proving the claim. \phantom{\qedhere}
	\end{proof}	
	We can finish the proof by proceeding as for statement (3). 
\end{proof}

\begin{remark}
	According to the Lie-B\"{a}cklund theorem (see e.g. \cite{AndersonIbragimov1979}), the internal symmetries of $J^kq$ for a submersion $q\: Q\to X$ are automatically Pfaffian and in fact correspond to prolongations of point symmetries (diffeomorphisms on $Q$), whenever the dimension of the fibers of $q$ is greater than one.
\end{remark}

\subsection{Pseudogroup symmetries of the underlying relative algebroid}
The symmetries of a Pfaffian fibration are defined in such a way that it interacts well with the underlying relative algebroid. 

\begin{proposition}\label{prop:PfaffianSymmetriesInduceAlgebroidSymmetries}
    The Pfaffian symmetries of a Pfaffian fibration $(P, \C, \pi)$ induce (local) isomorphisms of the underlying relative algebroid $(\pi^*TX, \onabla, \D^\C)$. 
\end{proposition}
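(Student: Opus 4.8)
The plan is to build the induced isomorphism directly from the description of the underlying relative algebroid in terms of the pair of distributions $(\C, \C^\pi)$, and to check compatibility with $\onabla$ and $\D^\C$ by reusing the computational scheme of Proposition \ref{prop:NaturalityOfPfaffianRelativeAlgebroid}. Since a Pfaffian symmetry $\varphi \in \Diffloc(P, \C, \pi)$ preserves both $\C$ and $\C^\pi$, it is in particular a local diffeomorphism of the foliated manifold $(P, \C^\pi)$, and $T\varphi$ descends to a bundle automorphism $\overline{T\varphi}\colon \nu(\C^\pi) \to \nu(\C^\pi)$ covering $\varphi$ which preserves the subbundle $\C/\C^\pi$. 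Composing with the identifications $I \colon \pi^*TX \xrightarrow{\sim} \C/\C^\pi$ and $\Pi = \overline{T\pi}\colon \C/\C^\pi \xrightarrow{\sim} \pi^*TX$ yields a vector bundle isomorphism $\varphi_* := \Pi \circ \overline{T\varphi} \circ I \colon \pi^*TX \to \pi^*TX$ covering $\varphi$; equivalently $\varphi_*(Y) = T\pi\bigl(T\varphi(\widetilde Y)\bigr)$ for any $\widetilde Y \in \C$ with $T\pi(\widetilde Y) = Y$, which is well defined because $T\varphi(\C^\pi) \subseteq \C^\pi \subseteq \ker T\pi$. One then wants to check that $\varphi_*$ is a (local) isomorphism of the relative algebroid $(\pi^*TX, \onabla, \D^\C)$.

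A convenient way to organize this is to set $\pi' := \pi \circ \varphi^{-1}$. Because $\varphi$ and $\varphi^{-1}$ preserve $\C$ and $\C^\pi$, one checks readily that $(P, \C, \pi')$ is again a Pfaffian fibration with the \emph{same} foliation $\C^{\pi'} = \C \cap \ker T\pi' = T\varphi(\C^\pi) = \C^\pi$, and that $\varphi\colon (P, \C, \pi) \to (P, \C, \pi')$ is an isomorphism of Pfaffian fibrations covering $\mathrm{id}_X$ (indeed $\pi' \circ \varphi = \pi$). By Proposition \ref{prop:NaturalityOfPfaffianRelativeAlgebroid} this isomorphism induces an isomorphism between the underlying relative algebroids $(\pi^*TX, \onabla, \D^\C)$ and $((\pi')^*TX, \onabla', \D^{\C'})$, and chasing it through the canonical identifications of $\pi^*TX$ and $(\pi')^*TX$ with $\C/\C^\pi$ recovers $\varphi_*$; since $(P, \C, \pi')$ carries the same essential distributions $\C$ and $\C^\pi$ as $(P, \C, \pi)$, this is what is meant by $\varphi_*$ being an isomorphism of the underlying relative algebroid, and applying the same argument to $\varphi^{-1}$ shows it is invertible. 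Alternatively, one may argue hands-on as in Proposition \ref{prop:NaturalityOfPfaffianRelativeAlgebroid}: for $v \in \Gamma(\C^\pi)$ and a flat form $\alpha$, using $(\ref{eq:DualFlatConnectionPfaffian})$, $(\ref{eq:PfaffianDerivationExplicit})$, $\varphi^*\d = \d\varphi^*$, $T\varphi(v) = \varphi_* v$, and the intertwining of $\varphi^*$ with $I$ and $\Pi$, one computes
$$\onabla_v \varphi^*\alpha = \Pi\bigl(\iota_v\, \d\, I\varphi^*\alpha\bigr) = \varphi^*\bigl(\onabla_{\varphi_* v}\alpha\bigr), \qquad \D^\C\varphi^*\alpha = \Pi\bigl(\d\, I\varphi^*\alpha\bigr) = \varphi^*\D^\C\alpha .$$

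The hard part is exactly this last point. In Proposition \ref{prop:NaturalityOfPfaffianRelativeAlgebroid} the relations $I\circ\varphi^* = \varphi^*\circ I$ and $\Pi\circ\varphi^* = \varphi^*\circ\Pi$ are available because there $\overline{T\varphi}$ preserves the full splitting $\nu(\C^\pi) \cong \C/\C^\pi \oplus \ker T\pi/\C^\pi$. For a Pfaffian symmetry only the summand $\C/\C^\pi$ is preserved — $\varphi$ need not preserve $\ker T\pi$, and correspondingly need not cover a diffeomorphism of $X$ — so these intertwining relations are no longer formal consequences and must be re-established; equivalently, one must make precise why replacing $\pi$ by $\pi\circ\varphi^{-1}$ produces ``the same'' underlying relative algebroid. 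I expect this to be the only delicate step, and the $\pi' = \pi\circ\varphi^{-1}$ device (or any bookkeeping that avoids singling out $\ker T\pi$ and works instead with the intrinsic data $\C$, $\C^\pi$ and the de Rham differential) to be the cleanest way around it; everything else is the routine calculation of Proposition \ref{prop:NaturalityOfPfaffianRelativeAlgebroid}.
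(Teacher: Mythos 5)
Your construction of the candidate map $\Phi=\Pi\circ\overline{T\varphi}\circ I$ is the same as the paper's, and you have put your finger on exactly the right spot: everything hinges on the intertwining of $\Pi$ with $\overline{T\varphi}$, equivalently on the claim that the underlying relative algebroid depends only on the pair $(\C,\C^\pi)$ and not on $\pi$. But your repair does not close that gap. The device $\pi'=\pi\circ\varphi^{-1}$ together with Proposition \ref{prop:NaturalityOfPfaffianRelativeAlgebroid} only produces an isomorphism onto the algebroid underlying $(P,\C,\pi')$; identifying \emph{that} algebroid with the one underlying $(P,\C,\pi)$ is precisely the unproven intertwining claim, restated. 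It is not a formal consequence of the two fibrations sharing $\C$ and $\C^\pi$: the connection $\onabla$ is obtained from $\onabla^{\mathrm{Bott}}$ by projecting onto $\C/\C^\pi$ \emph{along} $\ker T\pi/\C^\pi$ (Equation (\ref{eq:FlatConnectionPfaffian})), and replacing $\pi$ by $\pi'$ changes that complement, hence changes which sections of $\C/\C^\pi$ are flat. So the reduction is circular, and your ``hands-on'' display, which invokes the intertwining of $\varphi^*$ with $I$ and $\Pi$, asserts exactly what has to be proved.

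Moreover the needed compatibility genuinely fails for symmetries that do not preserve $\ker T\pi$, so no bookkeeping can make it formal. Take $P=J^1q$ for $q\:\RR^2_{(x,u)}\to\RR_x$, with coordinates $(x,u,p)$, $\C=\ker(\d u-p\,\d x)$, $\C^\pi=\mathrm{span}(\partial_p)$, and let $\varphi(x,u,p)=(x+u,\,u,\,p/(1+p))$ be the prolongation of the point transformation $(x,u)\mapsto(x+u,u)$. Then $\varphi\in\Diffloc(P,\C,\pi)$ but $T\varphi(\partial_u)=\partial_x+\partial_u\notin\ker T\pi$. Since $T\varphi(\partial_x+p\partial_u)=(1+p)(\partial_x+\tilde p\,\partial_u)$ at the image point, the induced map sends $\pi^*\partial_x$ to $(1+p)\,\pi^*\partial_x$, so $\Phi^*(\pi^*\d x)=(1+p)\,\pi^*\d x$, which is not $\onabla$-flat because $1+p$ is not $\C^\pi$-basic; hence $\Phi^*$ does not preserve $\Omega^1_{(\pi^*TX,\onabla)}$ and $(\Phi,\varphi)$ fails Equation (\ref{eq:MapOfFoliatedVectorBundles}). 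Equivalently, for $\pi'=\pi\circ\varphi^{-1}$ (here $\pi'(x,u,p)=x-u$) the $\onabla'$-flat sections of $\C/\C^\pi$ are spanned over basic functions by $\tfrac{1}{1-p}\,\overline{\partial_x+p\partial_u}$ rather than by $\overline{\partial_x+p\partial_u}$: the two structures on $\C/\C^\pi$ are genuinely different, so the bridge you want does not exist. Note that the paper's own proof simply asserts that ``$\overline{T\varphi}$ intertwines the projection $\Pi$'', which holds only when $T\varphi(\ker T\pi)\subseteq\ker T\pi$ modulo $\C^\pi$; so you cannot rescue your argument by falling back on that computation either. As stated, your proof has a genuine gap at this step, and closing it appears to require either an extra hypothesis (e.g.\ symmetries covering a diffeomorphism of $X$) or a reinterpretation of what ``induce'' means (e.g.\ working with jets/prolongations rather than with $\varphi$ itself).
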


\begin{proof}
    Let $\varphi\in \Diffloc(P, \C, \pi)$ be a symmetry. By definition, the tangent map $T\varphi$ preserves both $\C$ and $\C^\pi$, and therefore descends to a bundle map $\overline{T\varphi}\: \C/\C^\pi \to  \C/\C^\pi$. The induced map on $\nu(\C^\pi)$ is also denoted by $\overline{T\varphi}$. Using the identification $\C/\C^\pi \cong \pi^*TX$, we obtain a map $\Phi\: \pi^*TX\to \pi^*TX$ covering $\varphi$.   
    
    By definition, $\varphi$ is a map of foliations. We will first show that $(\Phi, \varphi)$ is a map of flat foliated vector bundles (Equation (\ref{eq:MapOfFoliatedVectorBundles})). Recall the definition of $\onabla$ from Equation (\ref{eq:FlatConnectionPfaffian}). The map $\overline{T\varphi}$ intertwines both the inclusion $I\:\pi^*TX\hookrightarrow \nu(\C^\pi)$ and the projection $\Pi\: \nu(\C^\pi)\to \C/\C^\pi$. Taking $X\in \Gamma(\C^\pi)$, $Y\in \Gamma(\C)$, it follows that
    \begin{align}\label{eq:ConnectionPreserved}
    \begin{split}
        \Phi^*\left(\onabla_X \overline{Y}\right) &= \Phi^* \Pi\left(\onabla^{\text{Bott}}_X \overline{Y}\right)
        = \Pi \left( \overline{\varphi^*} \left( \onabla^{\text{Bott}}_X \overline{Y} \right)\right) \\
        &= \Pi \left( \overline{\varphi^*([X, Y])}\right) = \Pi \left( \overline{[\varphi^*(X), \varphi^*(Y)]} \right) \\
        &= \Pi \left( \onabla^{\text{Bott}}_{\varphi^*(X)} \overline{\varphi}^* \overline{Y}\right) = \onabla_{\varphi^*(X)} \Phi^*(\overline{Y}),
    \end{split}
    \end{align}
    so $\Phi$ is indeed a map of foliated vector bundles. 

    One shows that $\Phi$ preserves brackets in a similar manner: if $X, Y\in \Gamma(\C)$, then
    \begin{align}\label{eq:BracketPreserved}
        \Phi^*[\overline{X}, \overline{Y}]^\C &=\Phi^* \left(\Pi(\overline{[X, Y]})\right) 
        = \Pi \left( \overline{[\varphi^*X, \varphi^*Y]}\right) = [\Phi^*\overline{X}, \Phi^*\overline{Y}]^\C.
    \end{align}
    If follows that $(\Phi, \varphi)$ is an isomorphism of relative algebroids.
\end{proof}
\begin{remark}
	Internal symmetries do not directly act on the relative algebroid. Their prolongations however, which are Pfaffian symmetries of the prolongation, act by symmetries of the algebroid underlying the prolongation.
\end{remark}

\subsection{Groupoids of symmetries of Pfaffian fibrations}\label{sec:GroupoidSymmetriesOfPfaffianFibrations}

The pseudogroups of internal and Pfaffian symmetries are determined by their first jet. So, for a Pfaffian fibration $(P, \C, \pi)$  it is perhaps more interesting to instead consider the Pfaffian groupoids
\begin{align*}
	\GG_\C &:= \left\{ j^1_x\varphi\in J^1\Diffloc(P)\ \big\vert \  T_x\varphi(\C_x) \subseteq \C_{\varphi(x)} \right\}, \\ 
    \GG_{\C, \pi} &:= \{ j^1_x\varphi\in \GG_\C\  \big\vert  \  T_x\varphi(\C^\pi_x) \subseteq \C^\pi_{\varphi(x)} \},
\end{align*}
equipped with the Cartan distribution $\HH^1$ on $J^1\Diffloc(P)$ (Example \ref{ex:CartanDistributionDiffeomorphisms}). Their holonomic bisections have the property that
\[
    \Bisloc(\GG_\C, \HH^1)\cong \Diffloc(P, \C), \quad \Bisloc(\GG_{\C, \pi}, \HH^1) \cong \Diffloc(P, \C, \pi).
\]
Note that, in general, $J^1\Diffloc(P, \C)$ (resp. $J^1\Diffloc(P, \C, \pi)$) is strictly contained in $\GG_\C$ (resp. $\GG_{\C, \pi}$).

\subsection{Groupoids of symmetries and the underlying relative algebroid}
It is the groupoid of Pfaffian symmetries $(\GG_{\C, \pi}, \HH^1)\toto P$ of a Pfaffian fibration $(P, \C, \pi)$ that interacts well with the underlying relative algebroid structure, as we will discuss next.

The canonical representation $\GG_{\C, \pi}\curvearrowright TP$ preserves both $\C$ and $\C^\pi$, and thus descends to a representation $\GG_{\C, \pi}\curvearrowright \C/\C^\pi\cong \pi^*TX$. 

\begin{theorem}\label{thm:PfaffianDerivationInvariant}
    The action $(\GG^{(1)}_{\C, \pi}, \GG_{\C, \pi})\curvearrowright (\pi^*TX, \onabla, \D^\C)$ is a representation by symmetries of relative algebroids (Section \ref{sec:RepresentationOnRelativeAlgebroids}). 
\end{theorem}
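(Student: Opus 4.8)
The plan is to verify the two ingredients that, by Section~\ref{sec:RepresentationOnRelativeAlgebroids}, make up a representation by symmetries of relative algebroids: first, that the pair $(\GG^{(1)}_{\C,\pi},\GG_{\C,\pi})$ genuinely acts on the flat foliated bundle $(\pi^*TX,\onabla)$; second, that $\D^\C$ is an invariant section of $\DD^1_{(\pi^*TX,\onabla)}$ for the induced action of $\GG^{(1)}_{\C,\pi}$.

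For the first ingredient I would start from the tautological representation of $J^1\Diffloc(P)$ on $TP$. Restricting it to $\GG_{\C,\pi}\subseteq J^1\Diffloc(P)$, whose elements $j^1_x\varphi$ satisfy $T_x\varphi(\C_x)\subseteq\C_{\varphi(x)}$ and $T_x\varphi(\C^\pi_x)\subseteq\C^\pi_{\varphi(x)}$ by definition, gives a representation preserving both $\C$ and $\C^\pi$; it therefore descends to a representation on $\nu(\C^\pi)$ respecting the canonical splitting $\nu(\C^\pi)\cong\C/\C^\pi\oplus\ker T\pi/\C^\pi$, and, through $\overline{T\pi}$, to a representation on $\C/\C^\pi\cong\pi^*TX$. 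Upgrading this to a representation on the flat foliated bundle $(\pi^*TX,\onabla)$ in the sense of Definition~\ref{def:RepresentationOnFlatFoliatedVectorBundle} is exactly what forces the appearance of the prolongation: elements of $\GG^{(1)}_{\C,\pi}$ are jets of holonomic bisections of $\GG_{\C,\pi}$, i.e.\ jets of Pfaffian symmetries, and the compatibility with $\onabla$ is the jet-level reading of the computation~(\ref{eq:ConnectionPreserved}) in the proof of Proposition~\ref{prop:PfaffianSymmetriesInduceAlgebroidSymmetries}; it follows from the pointwise intertwining relations $\overline{T\varphi}\circ I=I\circ\Phi$ and $\Phi\circ\Pi=\Pi\circ\overline{T\varphi}$ together with the naturality of the Bott connection on $\nu^*(\C^\pi)$ (equivalently, of $\alpha\mapsto-\iota_v\d\alpha$) under the germ of a Pfaffian symmetry.

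For the invariance of $\D^\C$ I would argue pointwise. On flat forms, $\D^\C\alpha=\Pi(\d I(\alpha))$ by~(\ref{eq:PfaffianDerivationExplicit}); by the first part the action sends flat forms to flat forms, it intertwines $I$ and $\Pi$ pointwise, and, the jet in play being the jet of a Pfaffian symmetry, it commutes with $\d$ on conormal sections of $\C^\pi$ to the order needed. Hence the invariance equation $(j^1_x\varphi)\cdot\D^\C_x=\D^\C_{\varphi(x)}$ is precisely the jet-local version of the operator identity $\D^\C\circ\Phi^*=\Phi^*\circ\D^\C$ established in Proposition~\ref{prop:PfaffianSymmetriesInduceAlgebroidSymmetries} (compare also the final two displays in the proof of Proposition~\ref{prop:NaturalityOfPfaffianRelativeAlgebroid}). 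It is enough to check it on the two generating classes of flat forms, namely $f\in C^\infty_{\C^\pi}$ with $\langle\D^\C f,X\rangle=\langle\d f,I(X)\rangle$ and $\pi^*\alpha$ for $\alpha\in\Omega^1_{TX}$ with $\D^\C\pi^*\alpha=\pi^*\d\alpha$, and then extend by the Leibniz rule; the corresponding invariance of the dual bracket $[\cdot,\cdot]^\C$ is the analogue~(\ref{eq:BracketPreserved}) and is in any case automatic once $\D^\C$ is invariant.

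The main obstacle is organizational rather than computational: matching the jet-groupoid picture to the honest-diffeomorphism picture of Proposition~\ref{prop:PfaffianSymmetriesInduceAlgebroidSymmetries}. Concretely, one must pin down the canonical representation of $\GG^{(1)}_{\C,\pi}$ on $\DD^1_{(\pi^*TX,\onabla)}$ from the appendix — and note that it is $\GG^{(1)}_{\C,\pi}$, and not a higher prolongation, that acts here, because $\GG_{\C,\pi}$ is already a groupoid of $1$-jets of diffeomorphisms — confirm that $\D^\C$ really maps flat forms to forms of one higher degree so that~(\ref{eq:PfaffianDerivationExplicit}) is available, and observe that the invariance identity is fibrewise algebraic in the relevant jet, so that holding on jets of genuine Pfaffian symmetries — which cut out exactly the conditions $h^*\omega_{\HH^1}=0$ and $h^*\kappa_{\HH^1}=0$ defining the prolongation — forces it to hold on all of $\GG^{(1)}_{\C,\pi}$. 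With these identifications in place, the verification reduces to the pointwise content of Proposition~\ref{prop:PfaffianSymmetriesInduceAlgebroidSymmetries}, so no genuinely new geometric input is required.
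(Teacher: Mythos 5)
Your proposal is correct and follows essentially the same route as the paper: both reduce the statement to the observation that the computations establishing Proposition \ref{prop:PfaffianSymmetriesInduceAlgebroidSymmetries} (Equations (\ref{eq:ConnectionPreserved}) and (\ref{eq:BracketPreserved})) are pointwise algebraic in the $2$-jet of the symmetry, hence hold for elements of $\GG^{(1)}_{\C,\pi}\subset J^2\Diffloc(P)$, after which Proposition \ref{prop:RepresentationsOfFoliatedVectorBundles} supplies the induced representation on $\DD^1_{(\pi^*TX,\onabla)}$. The only slight imprecision is describing elements of $\GG^{(1)}_{\C,\pi}$ as jets of genuine Pfaffian symmetries rather than formal $2$-jets satisfying the prolongation equations, but as you yourself note the verification is fibrewise algebraic in the jet, so this does not affect the argument.
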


\begin{proof}
    First, we show that the action $(\GG^{(1)}_{\C, \pi}, \GG_{\C, \pi})\curvearrowright (\pi^*TX, \onabla)$ is a representation on a flat foliated vector bundle (Definition \ref{def:RepresentationOnFlatFoliatedVectorBundle}): Equation \ref{eq:ConnectionPreserved} holds pointwise for $j^2_x\varphi\in \GG^{(1)}_{\C, \pi} \subset J^2\Diffloc(P)$ and the first jets of $X$ and $Y$, so $\GG^{(1)}_{\C, \pi}$ preserves the PDE of flat sections $J^1_{\onabla} \pi^*TX$ of $(\pi^*TX, \onabla)$. Consequently, by Proposition \ref{prop:RepresentationsOfFoliatedVectorBundles} there is a canonical representation on $\DD^1_{(\pi^*TX, \onabla)}$.
    
	The section $\D^\C$ is invariant for this representation because Equation \ref{eq:BracketPreserved} holds pointwise for elements in $\GG^{(1)}_{\C, \pi}$.
\end{proof}

\begin{remark}
    Since Theorem \ref{thm:PfaffianDerivationInvariant} is pointwise in nature, it is true even if $\GG^{(1)}_{\C, \pi}$ is not smooth.
\end{remark}

\section{Actions by Pfaffian groupoids on Pfaffian fibrations}\label{sec:ActionsByPfaffianGroupoidsOnPfaffianFibrations}
In practice, rather than considering the full groupoid of symmetries of a Pfaffian fibration, one is often interested specific symmetries of the PDE, e.g. diffeomorphisms or gauge symmetries. Such symmetries are often described by a certain action of a Pfaffian groupoid on a Pfaffian fibration.

Analogous to the two types of symmetries of Pfaffian fibrations, there will be two types of actions by Pfaffian groupoids: actions \textbf{by internal symmetries} and actions \textbf{by Pfaffian symmetries}. The groupoids acting by internal symmetries, however, only prolong to \emph{partial groupoid actions} on the prolongation.

\subsection{Actions by internal symmetries}
Let $(\GG, \HH)\toto M$ be a Pfaffian groupoid and $(P, \C, \pi)$ a Pfaffian fibration. We say that a partial action $m\: \GG\paction P$ along a moment map $\mu\: P\to M$ is \textbf{by internal symmetries} when the action map satisfying
\begin{equation}\label{eq:DefinitionInternalAction}
	Tm\left( \HH \leftindex_{Ts}{\times}_{T\mu} \C \big\vert_{\dom m} \right)\subseteq \C.
\end{equation}
\begin{lemma}
	If $(\GG, \HH)\paction (P, \C, \pi)$ is a partial action by internal symmetries, then the local holonomic bisections of $(\GG, \HH)$ act by local internal symmetries on $(P, \C, \pi)$. 
\end{lemma}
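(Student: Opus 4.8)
The plan is to unwind what it means for a local holonomic bisection $\sigma \in \Bisloc(\GG, \HH)$ to act on $P$, and then verify that the resulting local diffeomorphism of $P$ preserves $\C$. First I would recall that a partial action $m\: \GG \paction P$ together with a local bisection $\sigma$ of $\GG$ produces a local diffeomorphism $m_\sigma\: P \dasharrow P$ defined on the open set $\mu^{-1}(\dom\sigma) \cap \{p : (\sigma(\mu(p)), p) \in \dom m\}$ by $m_\sigma(p) = \sigma(\mu(p)) \cdot p$, with inverse $m_{\sigma^{-1}}$ provided by axioms (2)--(3) of Definition \ref{def:PartialAction}. The task is to show $T m_\sigma(\C) \subseteq \C$ on this domain.

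The key step is to express $T m_\sigma$ as a composition involving $Tm$ restricted to the fibered product, so that the hypothesis \eqref{eq:DefinitionInternalAction} applies. Concretely, $m_\sigma = m \circ (\sigma \circ \mu, \id_P)$, so for $v \in \C_p$ we have $T m_\sigma(v) = Tm\bigl( (T\sigma \circ T\mu)(v),\, v \bigr)$. Now I would use that $\sigma$ is \emph{holonomic}, i.e. $\im T\sigma \subseteq \HH$: this gives $(T\sigma \circ T\mu)(v) \in \HH$. Moreover, since $\sigma$ is a bisection, $s \circ \sigma = \id$ on $\dom\sigma$, so $Ts\bigl( (T\sigma \circ T\mu)(v) \bigr) = T\mu(v)$, which is exactly the compatibility condition needed for $\bigl( (T\sigma\circ T\mu)(v), v\bigr)$ to lie in $\HH \leftindex_{Ts}{\times}_{T\mu} \C$. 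Applying \eqref{eq:DefinitionInternalAction} then yields $T m_\sigma(v) = Tm\bigl((T\sigma\circ T\mu)(v), v\bigr) \in \C$, so $m_\sigma$ is an internal symmetry. Finally, since $m_{\sigma^{-1}}$ is the inverse of $m_\sigma$ and is itself of this form, $T m_\sigma(\C) = \C$ on the domain, and $m_\sigma$ restricts to a bijection between solutions and solutions where defined.

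I do not anticipate a serious obstacle here; the statement is essentially a bookkeeping consequence of the definitions. The one point requiring a little care is the identification of the domain of $m_\sigma$ as an open set and the verification that it is nonempty near any point where $(u(\mu(p)), p) \in \dom m$ — but this follows from openness of $\dom m$ (built into the notion of local map) together with axiom (1) of Definition \ref{def:PartialAction}, which guarantees the unit acts trivially. A second minor point is that $m_\sigma$ a priori need not cover a diffeomorphism of $X$; but this is not needed, since internal symmetries (unlike morphisms of Pfaffian fibrations) are not required to be fibered over the base, as already noted in the remark following the definition of $\Diffloc(P, \C, \pi)$.
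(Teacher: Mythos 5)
Your proof is correct and follows the same route as the paper: the paper simply observes that the left translation $L_\sigma(x) = m(\sigma(\mu(x)), x)$ is an internal symmetry by the defining inclusion $Tm(\HH \leftindex_{Ts}{\times}_{T\mu} \C) \subseteq \C$, and your argument fills in exactly the details behind that one-line claim (holonomicity of $\sigma$ placing $T\sigma(T\mu(v))$ in $\HH$, and $s\circ\sigma=\id$ giving the fibered-product compatibility). Nothing to correct.
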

\begin{proof}
	For a local holonomic bisection $\sigma\in \Bisloc(\GG, \HH)$, the local diffeomorphism $L_\sigma\: P \dasharrow P$ obtained from left translations, i.e. $L_\sigma(x) = m(\sigma(\mu(x)), x)$, is an internal symmetry by Equation \ref{eq:DefinitionInternalAction}.
\end{proof}

\begin{lemma}
	Let $\Phi\: (\GG_1, \HH_1)\to (\GG_2, \HH_2)$ be a morphism of Pfaffian groupoids (covering the identity). If $(\GG_2, \HH_2)\paction (P, \C, \pi)$ is a partial action by internal symmetries, then so is the induced action. 
\end{lemma}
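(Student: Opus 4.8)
The plan is to unwind the definition of the induced partial action and deduce the internal-symmetry condition for $\GG_1$ directly from the one for $\GG_2$, using only that $\Phi$ is a morphism of Pfaffian groupoids covering the identity.

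Recall that the induced partial action $m_1\: \GG_1\paction P$ along $\mu$ is $m_1 = m_2\circ(\Phi\times\id_P)$, with domain $\dom m_1 = (\Phi\times\id_P)^{-1}(\dom m_2)$ inside $\GG_1\leftindex_{s_1}{\times}_{\mu} P$; this is indeed a partial action because $\Phi$ covers the identity, so $s_2\circ\Phi = s_1$ and $t_2\circ\Phi = t_1$, and hence $\mu$ is a moment map for it as well. The two facts I would record about $\Phi$ being a morphism of Pfaffian groupoids are $T\Phi(\HH_1)\subseteq\HH_2$ and $Ts_2\circ T\Phi = Ts_1$ (the latter differentiating $s_2\circ\Phi = s_1$).

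Now fix $(g,p)\in\dom m_1$ and a vector $(X,Y)\in\bigl(\HH_1\leftindex_{Ts_1}{\times}_{T\mu}\C\bigr)_{(g,p)}$. Then $T\Phi(X)\in\HH_2$, and $Ts_2(T\Phi(X)) = Ts_1(X) = T\mu(Y)$, so $(T\Phi(X),Y)$ lies in $\HH_2\leftindex_{Ts_2}{\times}_{T\mu}\C$ over the point $(\Phi(g),p)\in\dom m_2$. Since $(\GG_2,\HH_2)\paction(P,\C,\pi)$ is by internal symmetries, Equation (\ref{eq:DefinitionInternalAction}) gives $Tm_2(T\Phi(X),Y)\in\C$; and the chain rule applied to $m_1 = m_2\circ(\Phi\times\id_P)$ yields $Tm_1(X,Y) = Tm_2(T\Phi(X),Y)$. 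Therefore $Tm_1(X,Y)\in\C$, which is exactly Equation (\ref{eq:DefinitionInternalAction}) for the induced action.

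I do not expect a genuine obstacle here: the statement is essentially naturality of the construction. The only two points requiring any care are that $\Phi\times\id_P$ sends $\dom m_1$ into $\dom m_2$ (true by the definition of $\dom m_1$) and that $T\Phi$ respects the fiber-product/transversality condition defining $\HH\leftindex_{Ts}{\times}_{T\mu}\C$ (immediate from $Ts_2\circ T\Phi = Ts_1$); everything else is the chain rule.
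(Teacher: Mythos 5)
Your proof is correct: the paper states this lemma without proof, treating it as the routine naturality check, and your argument—pushing a vector $(X,Y)\in\HH_1\leftindex_{Ts_1}{\times}_{T\mu}\C$ forward via $T\Phi$ into $\HH_2\leftindex_{Ts_2}{\times}_{T\mu}\C$ using $T\Phi(\HH_1)\subseteq\HH_2$ and $Ts_2\circ T\Phi=Ts_1$, then applying the chain rule to $m_1=m_2\circ(\Phi\times\id_P)$—is exactly the intended verification. The two points you flag (domains and the fiber-product condition) are indeed the only ones needing care, and you handle both correctly.
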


\begin{example}
	The groupoid $\GG_\C \toto P$ of internal symmetries of a Pfaffian fibration $(P, \C, \pi)$ acts on $(P, \C, \pi)$ by internal symmetries. 
\end{example}

\subsection{Actions by Pfaffian symmetries}
Let $(\GG, \HH)\toto M$ be a Pfaffian groupoid and $(P, \C, \pi)$ a Pfaffian fibration. We say that $(\GG, \HH)$ \textbf{acts} on $P$ along $\mu\: P\to M$ \textbf{by Pfaffian symmetries} when the (partial) action map $m\: \GG \leftindex_{s}{\times}_\mu P\dasharrow P$ satisfies
\begin{equation}\label{eq:DefinitionGroupoidActionByPfaffianSymmetries}
    Tm\left(\HH \leftindex_{Ts}{\times}_{T\mu} \C\vert_{\dom m} \right)\subseteq \C, \quad  Tm\left( \HH \leftindex_{Ts}{\times}_{T\mu} \C^\pi \vert_{\dom m} \right) \subseteq \C^\pi. 
\end{equation}

\begin{proposition}
    If $(\GG, \HH)\toto M$ acts by Pfaffian symmetries on $(P, \C, \pi)$ along a map $\mu\: P\to M$. Then the pseudogroup of holonomic bisection $\Bisloc(\GG, \omega)$ acts on $P$ by Pfaffian symmetries of $(P, \C, \pi)$. 
\end{proposition}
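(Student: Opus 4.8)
The plan is to reduce the statement to a pointwise application of the defining inclusions of an action by Pfaffian symmetries, Equation \ref{eq:DefinitionGroupoidActionByPfaffianSymmetries}, along the graph of a holonomic bisection. The starting observation is that for a local holonomic bisection $\sigma\in\Bisloc(\GG,\HH)$ the associated left translation $L_\sigma$ factors, on its domain, as $L_\sigma = m\circ(\sigma\circ\mu,\,\id_P)$, so that for $v\in T_xP$ one has $T_xL_\sigma(v) = Tm\bigl(T\sigma(T\mu(v)),\,v\bigr)$. Here $(\sigma\circ\mu,\id_P)$ does land in the fiber product $\GG\leftindex_{s}{\times}_\mu P$ precisely because $s\circ\sigma=\id$ on the domain of $\sigma$.

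First I would verify that the pair $\bigl(T\sigma(T\mu(v)),\,v\bigr)$ lies in $\HH\leftindex_{Ts}{\times}_{T\mu}\C$ whenever $v\in\C_x$: its first component lies in $\HH$ since $\im T\sigma\subset\HH$ by holonomicity of $\sigma$; its second component lies in $\C$ by hypothesis; and the matching condition holds because $Ts\circ T\sigma = T(s\circ\sigma) = \id$, so $Ts(T\sigma(T\mu(v))) = T\mu(v)$. Feeding this into the first inclusion of Equation \ref{eq:DefinitionGroupoidActionByPfaffianSymmetries} gives $T_xL_\sigma(v)\in\C$, i.e.\ $TL_\sigma(\C)\subseteq\C$. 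Running the identical argument with $v\in\C^\pi_x$ and invoking the second inclusion of Equation \ref{eq:DefinitionGroupoidActionByPfaffianSymmetries} yields $TL_\sigma(\C^\pi)\subseteq\C^\pi$. Hence each $L_\sigma$ is a Pfaffian symmetry of $(P,\C,\pi)$, so $L_\sigma\in\Diffloc(P,\C,\pi)$. (The $\C$-clause alone recovers the earlier lemma that holonomic bisections of $(\GG,\HH)$ act by internal symmetries; the present statement just adds the $\C^\pi$-clause.)

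It then remains to note that $\sigma\mapsto L_\sigma$ is compatible with the pseudogroup structure: compatibility with restriction is immediate, while the relations $L_{\sigma_1\cdot\sigma_2} = L_{\sigma_1}\circ L_{\sigma_2}$ and $L_{\sigma^{-1}} = (L_\sigma)^{-1}$ on the appropriate domains follow from the (partial) groupoid action axioms of Definition \ref{def:PartialAction}, together with the fact that composition and inversion of holonomic bisections are again holonomic (since $\Bisloc(\GG,\HH)$ is a generalized pseudogroup). As $\Diffloc(P,\C,\pi)$ is itself closed under composition, inversion, and restriction, this exhibits $L$ as a morphism of pseudogroups $\Bisloc(\GG,\HH)\to\Diffloc(P,\C,\pi)$, which is exactly the claimed action by Pfaffian symmetries. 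The computation above is local and pointwise, so there is no real obstacle; the only point requiring attention is the domain bookkeeping in the partial case — one restricts $L_\sigma$ to the open set $\{x : \mu(x)\in\dom\sigma,\ (\sigma(\mu(x)),x)\in\dom m\}$ — but this does not interfere with the tangential argument.
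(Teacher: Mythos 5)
Your proposal is correct and is essentially the paper's own argument, made explicit: the paper simply defines $L_\sigma(x)=m(\sigma(\mu(x)),x)$ and observes that Equation (\ref{eq:DefinitionGroupoidActionByPfaffianSymmetries}) forces $L_\sigma\in\Diffloc(P,\C,\pi)$ when $\sigma$ is holonomic, which is exactly your pointwise computation with the pair $(T\sigma(T\mu(v)),v)$. The additional remarks on pseudogroup compatibility and domain bookkeeping are fine but not part of the paper's (one-line) proof.
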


\begin{proof}
    For $\sigma\in \Bisloc(\GG)$, let $L_\sigma\in \Diffloc(P)$ be the local diffeomorphism defined as $L_\sigma(x) = m(\sigma(\mu(x)), x)$. Equation (\ref{eq:DefinitionGroupoidActionByPfaffianSymmetries}) implies that $L_\sigma\in \Diffloc(P, \C, \pi)$ whenever $\sigma\in \Bisloc(\GG, \HH)$.
\end{proof}

\begin{lemma}\label{lem:ActionByPfaffianSymmetriesAndMorphisms}
	Let $\Phi\:(\GG_1, \HH_1) \to (\GG_2, \HH_2)$ be a morphism of Pfaffian groupoids (covering the identity). If $(\GG_2, \HH_2)\curvearrowright (P, \C, \pi)$ is an action by Pfaffian symmetries, then so is the induced action $(\GG_1, \HH_1)\curvearrowright (P, \C, \pi)$.  
\end{lemma}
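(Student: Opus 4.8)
The plan is to unwind what the induced action actually is and then reduce the two defining inclusions to the corresponding ones for the $\GG_2$-action. By the construction recalled in Section~\ref{sec:PartialActionsAndPfaffianGroupoids}, the induced action of $(\GG_1,\HH_1)$ on $P$ is taken along the same moment map $\mu$ and is given by $m_1 = m_2\circ(\Phi\times\id_P)$, with domain $\dom m_1 = (\Phi\times\id_P)^{-1}(\dom m_2)$ sitting inside $\GG_1 \leftindex_{s_1}{\times}_\mu P$. Hence $Tm_1 = Tm_2\circ(T\Phi\times\id_{TP})$ over $T(\dom m_1)$, and the two inclusions of~(\ref{eq:DefinitionGroupoidActionByPfaffianSymmetries}) for $(\GG_1,\HH_1)$ will follow once we know that $T\Phi\times\id$ carries the relevant fiber products into those for $\GG_2$.

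The first step is this compatibility of fiber products. Since $\Phi$ covers the identity on $M$, we have $s_2\circ\Phi = s_1$, hence $Ts_2\circ T\Phi = Ts_1$. Thus if $(X,Y)$ lies in $\HH_1 \leftindex_{Ts_1}{\times}_{T\mu}\C$ over a point of $\dom m_1$, then $Ts_2(T\Phi(X)) = Ts_1(X) = T\mu(Y)$, so $(T\Phi(X),Y)$ is again in the fiber product and its base point lies in $\dom m_2$; moreover the defining condition $T\Phi(\HH_1)\subseteq\HH_2$ of a morphism of Pfaffian groupoids gives $T\Phi(X)\in\HH_2$, so $(T\Phi(X),Y)\in \HH_2\leftindex_{Ts_2}{\times}_{T\mu}\C\vert_{\dom m_2}$. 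The same reasoning with $\C^\pi$ in place of $\C$ shows that $T\Phi\times\id$ sends $\HH_1\leftindex_{Ts_1}{\times}_{T\mu}\C^\pi\vert_{\dom m_1}$ into $\HH_2\leftindex_{Ts_2}{\times}_{T\mu}\C^\pi\vert_{\dom m_2}$.

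The second step is then immediate: composing with $Tm_2$ and using that $(\GG_2,\HH_2)\curvearrowright(P,\C,\pi)$ satisfies~(\ref{eq:DefinitionGroupoidActionByPfaffianSymmetries}), we obtain $Tm_1\big(\HH_1\leftindex_{Ts_1}{\times}_{T\mu}\C\vert_{\dom m_1}\big)\subseteq Tm_2\big(\HH_2\leftindex_{Ts_2}{\times}_{T\mu}\C\vert_{\dom m_2}\big)\subseteq\C$, and likewise $Tm_1\big(\HH_1\leftindex_{Ts_1}{\times}_{T\mu}\C^\pi\vert_{\dom m_1}\big)\subseteq\C^\pi$. These are precisely the two conditions in~(\ref{eq:DefinitionGroupoidActionByPfaffianSymmetries}) for the $\GG_1$-action, so $(\GG_1,\HH_1)\curvearrowright(P,\C,\pi)$ is an action by Pfaffian symmetries. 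I do not foresee any genuine obstacle here: the argument is purely formal — the functoriality shadow of the definition of an action by Pfaffian symmetries — and the only point requiring a little care is keeping track of the domain of the induced partial action, which is already handled by the general discussion in Section~\ref{sec:PartialActionsAndPfaffianGroupoids}.
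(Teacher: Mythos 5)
Your proof is correct. The paper actually states this lemma without proof (as it does for the analogous lemma for internal symmetries), evidently regarding it as the routine functoriality check you carry out: your argument — writing $m_1 = m_2\circ(\Phi\times\id_P)$, using $s_2\circ\Phi = s_1$ and $T\Phi(\HH_1)\subseteq\HH_2$ to map the fiber products over $\dom m_1$ into those over $\dom m_2$, and then invoking the two inclusions for the $\GG_2$-action — is exactly the intended verification, and it is complete and correct.
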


\begin{example}
	Let $(P, \C, \pi)$ be a Pfaffian fibration and $(\GG_\C,\HH^1)\toto P$ its Pfaffian groupoid of internal symmetries (Section \ref{sec:GroupoidSymmetriesOfPfaffianFibrations}). There is a partial action of $\GG_\C$ on the prolongation defined as follows: for $j^1_x\varphi\in \GG_\C$ and $h_x\in J^1_\C P$, we set
	\[
		j^1_x \varphi \cdot h_x = T_x\varphi \circ h_x \circ (T_{\varphi(x)}\pi\circ T_x\varphi \circ h_x)^{-1} 
	\]
	which is only defined when $T\pi\circ T\varphi\circ h_x$ is invertible. This defines a partial action (Definition \ref{def:PartialAction}) $\GG_\C \paction J^1_\C P$. 
	
	Inside the groupoid of internal symmetries sits the groupoid of Pfaffian symmetries $\GG_{\C, \pi}\subseteq \GG_\C$, and the partial action $\GG_\C \paction J^1_\C P$ restricts to a complete action $\GG_{\C, \pi} \curvearrowright J^1_\C P$. 
	
	\begin{proposition}\label{prop:InternalGroupoidProlongation}
		Let $(P, \C, \pi)$ be a Pfaffian fibration.
		\begin{enumerate}
			\item The partial action $(\GG_\C, \HH^1)\paction (J^1_\C P, \C^1, \pi^{(1)})$ is an action by Pfaffian symmetries.
			\item The prolongation space $P^{(1)}\subset J^1_\C P$ is preserved by the (partial action) of the reduced Pfaffian groupoid $\overline{\GG_\C}\subseteq \GG_\C$:
			\[
				\overline{\GG_\C}\paction P^{(1)},
			\]  
			which, if smooth, is an action by Pfaffian symmetries.
		\end{enumerate}
	\end{proposition}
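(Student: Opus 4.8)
\emph{Proof plan.} For (1) the plan is to verify the two inclusions in~(\ref{eq:DefinitionGroupoidActionByPfaffianSymmetries}) by a direct pointwise computation, which is the groupoid counterpart of the proof of Theorem~\ref{thm:InternalAndPfaffianSymmetries}(1). The moment map for $\GG_\C\paction J^1_\C P$ is $p_1\colon J^1_\C P\to P$, and $m$ covers the tautological action on $P$, i.e. $p_1\circ m=t\circ\pr_{\GG_\C}$, so $Tp_1\circ Tm=Tt\circ\pr_{\GG_\C}$. I would combine this with the two characterizations: $X\in\HH^1$ over $j^1_x\varphi\in\GG_\C$ iff $Tt(X)=T_x\varphi\circ Ts(X)$ (Equation~(\ref{eq:CharacterizationCartanDistributionDiffeomorphisms})); and $W\in\C^1$ over $h$ iff $Tp_1(W)=h\circ T\pi^{(1)}(W)$, so that $(\C^1)^{\pi^{(1)}}=\ker Tp_1$. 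Given $(X,Y)\in\HH^1\leftindex_{Ts}{\times}_{Tp_1}\C^1$ over $(j^1_x\varphi,h_x)$, set $u:=Ts(X)=Tp_1(Y)$ and $w:=T\pi^{(1)}(Y)=T\pi(u)$; then $Y\in\C^1$ gives $u=h_x(w)$, and $X\in\HH^1$ gives $Tp_1(Tm(X,Y))=Tt(X)=T_x\varphi(u)=T_x\varphi(h_x(w))$. If $Y\in(\C^1)^{\pi^{(1)}}=\ker Tp_1$, then $u=0$, so this is $0$ and $Tm(X,Y)\in\ker Tp_1=(\C^1)^{\pi^{(1)}}$. In general $T\pi^{(1)}(Tm(X,Y))=T_{\varphi(x)}\pi(T_x\varphi(h_x(w)))=A(w)$ with $A=T_{\varphi(x)}\pi\circ T_x\varphi\circ h_x$, and the action formula gives $(j^1_x\varphi\cdot h_x)(A(w))=T_x\varphi(h_x(w))=Tp_1(Tm(X,Y))$; hence $Tm(X,Y)\in\C^1$. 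This establishes~(\ref{eq:DefinitionGroupoidActionByPfaffianSymmetries}) for the action on $J^1_\C P$.

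For (2) I would use that $P^{(1)}=\{h_x\in J^1_\C P\mid h_x^*\kappa_\C=0\}$ and that $\kappa_\C\in\Gamma(\Hom(\wedge^2\C,TP/\C))$ is tensorial, so $h_x^*\kappa_\C=0$ precisely says $\im h_x\subseteq\C_x$ is $\kappa_\C$-isotropic; moreover $\im(g\cdot h_x)=T_x\varphi(\im h_x)$ for $g=j^1_x\varphi$. The crux is the following.
\begin{claim*}
If $g=j^1_x\varphi\in\overline{\GG_\C}=p_1(\GG_\C^{(1)})$, then $T_x\varphi\colon\C_x\to\C_{\varphi(x)}$ intertwines the curvature: $\kappa_\C(T_x\varphi\,\cdot\,,T_x\varphi\,\cdot\,)=\overline{T_x\varphi}\circ\kappa_\C$ on $\wedge^2\C_x$, where $\overline{T_x\varphi}\colon(TP/\C)_x\to(TP/\C)_{\varphi(x)}$ is induced by $T_x\varphi$.
\end{claim*}
Granting the claim, if $h_x\in P^{(1)}$ and $a,b\in\im(g\cdot h_x)$, writing $a=T_x\varphi(a')$, $b=T_x\varphi(b')$ with $a',b'\in\im h_x$ gives $\kappa_\C(a,b)=\overline{T_x\varphi}(\kappa_\C(a',b'))=0$, so $g\cdot h_x\in P^{(1)}$. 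Hence the partial action of (1), restricted along the subgroupoid $\overline{\GG_\C}\subseteq\GG_\C$, restricts further to $\overline{\GG_\C}\paction P^{(1)}$. If $\GG_\C^{(1)}$, $\overline{\GG_\C}$ and $P^{(1)}$ are moreover smooth, then $(P^{(1)},\C^1\cap TP^{(1)},\pi^{(1)})$ is a Pfaffian fibration, and intersecting the two inclusions from (1) with $TP^{(1)}$ shows that $\overline{\GG_\C}$ acts on it by Pfaffian symmetries.

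The main obstacle is the claim, which is also the reason one works with $\overline{\GG_\C}$ rather than all of $\GG_\C$ (which, as noted in Section~\ref{sec:GroupoidSymmetriesOfPfaffianFibrations}, strictly contains $J^1\Diffloc(P,\C)$): a bare $1$-jet $j^1_x\varphi\in\GG_\C$ only knows that $T_x\varphi(\C_x)\subseteq\C_{\varphi(x)}$, whereas $\kappa_\C$ involves one derivative of $\C$. To prove the claim I would observe that a lift $\hat g\in\GG_\C^{(1)}\subseteq J^1\GG_\C$, being a $1$-jet of a bisection of $\GG_\C$ tangent to the Cartan distribution of $J^1\Diffloc(P)$ with vanishing curvature, carries the $2$-jet at $x$ of a local diffeomorphism $\varphi$ that preserves $\C$ to first order at $x$ (a genuine element of $\Diffloc(P,\C)$ whenever the relevant spaces are smooth). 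For such $\varphi$ one has $T_x\varphi(\C_x)\subseteq\C_{\varphi(x)}$ and $\varphi_*[\widetilde X,\widetilde Y]=[\varphi_*\widetilde X,\varphi_*\widetilde Y]$ with $\varphi_*\widetilde X,\varphi_*\widetilde Y$ agreeing with sections of $\C$ to first order at $\varphi(x)$, which by tensoriality of $\kappa_\C$ is exactly enough to conclude $\kappa_\C(T_x\varphi a,T_x\varphi b)=\overline{T_x\varphi}(\kappa_\C(a,b))$ for $a,b\in\C_x$. Part (1), by contrast, is routine bookkeeping with the Cartan forms and presents no real difficulty.
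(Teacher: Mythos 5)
Your proposal is correct and follows essentially the same route as the paper: part (1) is the pointwise first-jet verification underlying Theorem \ref{thm:InternalAndPfaffianSymmetries}(1), and part (2) rests on exactly the paper's key point, namely that the curvature equivariance $\kappa_\C(T_x\varphi\,\cdot\,,T_x\varphi\,\cdot\,)=\overline{T_x\varphi}\circ\kappa_\C$ holds for those $j^1_x\varphi$ covered by an element of $\GG^{(1)}_\C=J^1\GG_\C\cap J^2\Diffloc(P)$, i.e.\ for $\overline{\GG_\C}=p_1(\GG_\C^{(1)})$, because $\kappa_\C$ only sees first jets of sections of $\C$. Only the parenthetical claiming the lift is ``a genuine element of $\Diffloc(P,\C)$ whenever the relevant spaces are smooth'' is unjustified (and unnecessary): the argument needs, and uses, only the $2$-jet preserving $\C$ to first order at $x$.
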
 
	
	\begin{proof}
	The proof of statement (1) is the same as that for statement (1) of Theorem \ref{thm:InternalAndPfaffianSymmetries}, because the argument only relies pointwise on the first jet of $\varphi$.

	Statement (2) is not as immediate, since it is not longer automatically true that any first jet $j^1_x \varphi$ preserving $\C$ also preserves $P^{(1)}$. Rather, preserving $P^{(1)}$ depends on the existence of a symmetric jet, which we will now explain. 
		
	First note that the Cartan form $\theta_\C\: TP\to TP/\C$ is equivariant with respect to the natural representations of $\GG_\C$ on $TP$ and $TP/\C$: if $j^1_x\varphi\in \GG_\C$, then
	\[
		\theta_\C(T_x\varphi(X))= 	\overline{T_x\varphi} \circ \theta_\C(X).
	\]
	Recall that the curvature $\kappa_\C$ is given by $\kappa_\C(X, Y) = \theta_\C([\tilde{X}, \tilde{Y}]_x)$, where $\tilde{X}, \tilde{Y}\in \Gamma(\C)$ are any extension through $X, Y$. Now, a priori, the curvature depends pointwise on the first jets of $\tilde{X}$ and $\tilde{Y}$ in $J^1\C$. There is an action of $J^1\GG_\C$ on $J^1\C$, but the Lie bracket is only preserved by $J^2\Diffloc(P)$. Hence the curvature is only preserved by 
	\[
	\GG^{(1)}_\C = J^1\GG_\C \cap J^2\Diffloc(P).
	\]
	This shows that for $j^1_x\varphi \in \GG_\C$, the identity
	\[
		\kappa_\C(T_x\varphi(X), T_x\varphi(Y)) 	=T_x\varphi \circ \kappa_{\C}(X, Y)
	\]
	holds only when it is covered by $j^2_x\varphi \in \GG^{(1)}_\C$. It follows that $\overline{\GG_\C} = p_1(\GG^{(1)}_\C)$ preserves $P^{(1)}$.  
	\end{proof}
\end{example}
\begin{remark}
	Item (2) in Proposition \ref{prop:InternalGroupoidProlongation} seems to suggest that there might be a specific relation between the intrinsic torsion of $(\GG_\C, \HH^1)$ and of $(P, \C, \pi)$. This is true for Pfaffian actions \cite{AccorneroCattafiCrainicSalazarBook}, but we don't know yet what it is for general actions by Pfaffian symmetries.
\end{remark}

\begin{lemma}\label{lem:MorphismIntoGroupoidOfPfaffianSymmetries}
	Let $(\GG, \HH)\paction (P, \C, \pi)$ be a partial action by internal symmetries. Then there are canonical morphisms of Pfaffian groupoids
	\[
		\left((J^1_\HH \GG) \pltimes P, \pr_{J^1_\HH \GG}^*\HH^1\right) \to \left(J^1_{\pr_\GG^*\HH} (\GG \pltimes P), \HH^1\right) \to (\GG_{\C}, \HH^1).
	\]
	If $(\GG, \HH)$ acts by Pfaffian symmetries, the left map takes values in $\GG_{\C, \pi}$ and is still a morphism of Pfaffian groupoids.
\end{lemma}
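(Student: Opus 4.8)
The first arrow in the chain is exactly Lemma~\ref{lem:ActionPfaffianGroupoid}(3) applied to the partial action $\GG\paction P$, so the content to be added is the construction of the second arrow---call it $\Psi$---and the refinement in the Pfaffian-symmetry case. The plan is to build $\Psi$ by \emph{left translation} inside the action groupoid. By Lemma~\ref{lem:ActionPfaffianGroupoid}(1), $(\GG\pltimes P,\pr_\GG^*\HH)\toto P$ is a Pfaffian groupoid; writing a local bisection of $\GG\pltimes P$ as $\tilde\sigma(y)=(\beta(y),y)$ with $\beta\colon P\dashrightarrow\GG$ and $s\circ\beta=\mu$, it induces the local diffeomorphism $L_{\tilde\sigma}:=t_{\GG\pltimes P}\circ\tilde\sigma=m\circ(\beta,\id_P)$ of $P$, and since $T_xL_{\tilde\sigma}=T_{\tilde\sigma(x)}t_{\GG\pltimes P}\circ T_x\tilde\sigma$, the jet $j^1_xL_{\tilde\sigma}$ depends only on $j^1_x\tilde\sigma$. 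I would then define
\[
\Psi\colon J^1_{\pr_\GG^*\HH}(\GG\pltimes P)\longrightarrow J^1\Diffloc(P),\qquad \Psi(j^1_x\tilde\sigma)=j^1_xL_{\tilde\sigma},
\]
and prove that it has image in $\GG_\C$ and is a morphism of Pfaffian groupoids.

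First I would check that the image lies in $\GG_\C$: a bisection $\tilde\sigma$ is tangent to $\pr_\GG^*\HH$ precisely when $\beta$ is tangent to $\HH$, and then for $V\in\C_x$ the pair $(T_x\beta(V),V)$ lies in $\HH\leftindex_{Ts}{\times}_{T\mu}\C$, so the internal-action condition~\eqref{eq:DefinitionInternalAction} gives $T_xL_{\tilde\sigma}(V)=Tm(T_x\beta(V),V)\in\C$; hence $j^1_xL_{\tilde\sigma}\in\GG_\C$. Next, $\Psi$ covers $\id_P$ and sends units to units (because $L_{\tilde u}=\id_P$ by the partial-action axioms), and it is multiplicative: this reduces to $L_{\tilde\tau\cdot\tilde\sigma}=L_{\tilde\tau}\circ L_{\tilde\sigma}$, the standard compatibility of left translations with groupoid multiplication, together with the fact that jet composition in $J^1(\GG\pltimes P)$ and in $J^1\Diffloc(P)$ is induced from composition of bisections.

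The one genuine computation is that $\Psi$ respects the Cartan distributions, and it is short. Writing $s,t,p_1$ for the structure maps of the relevant jet groupoids, one has the source/target compatibilities $s\circ\Psi=s$ and $t\circ\Psi=t_{\GG\pltimes P}\circ p_1$; so for $X\in\HH^1$ above $j^1_x\tilde\sigma$ the characterization~\eqref{eq:CharacterizationCartanDistributionJetGroupoid} gives $Tp_1(X)=T_x\tilde\sigma(Ts(X))$, whence
\begin{align*}
Tt\bigl(T\Psi(X)\bigr)&=Tt_{\GG\pltimes P}\bigl(Tp_1(X)\bigr)=Tt_{\GG\pltimes P}\bigl(T_x\tilde\sigma(Ts(X))\bigr)\\
&=T_xL_{\tilde\sigma}\bigl(Ts(X)\bigr)=T_xL_{\tilde\sigma}\bigl(Ts(T\Psi(X))\bigr),
\end{align*}
which is exactly the condition~\eqref{eq:CharacterizationCartanDistributionDiffeomorphisms} for $T\Psi(X)\in\HH^1$. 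Composing $\Psi$ with the first arrow yields the asserted morphism of Pfaffian groupoids $(J^1_\HH\GG)\pltimes P\to(\GG_\C,\HH^1)$.

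Finally, for the Pfaffian-symmetry case, tracing through the first arrow $i$ the composite sends $(j^1_{\mu(x)}\sigma,x)$ to $j^1_xL_\sigma$ with $L_\sigma=m\circ(\sigma\circ\mu,\id_P)$; repeating the argument of the second paragraph with $V\in\C^\pi_x$ and invoking the second half of~\eqref{eq:DefinitionGroupoidActionByPfaffianSymmetries} shows $T_xL_\sigma(\C^\pi_x)\subseteq\C^\pi$, so the composite factors through $(\GG_{\C,\pi},\HH^1)\hookrightarrow(\GG_\C,\HH^1)$, and the factored map remains a morphism of Pfaffian groupoids by the preceding steps. I do not anticipate a serious obstacle; the delicate points are the bookkeeping of the source/target compatibilities used in the distribution computation and the observation that---since $\GG_\C$ and $\GG_{\C,\pi}$ need not be smooth---every assertion about them must be read fibrewise, which is harmless throughout.
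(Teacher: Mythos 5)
Your proposal is correct and follows essentially the paper's route: the left arrow is Lemma \ref{lem:ActionPfaffianGroupoid}, and your $\Psi$ is exactly the paper's map, namely the first-jet prolongation of the action-groupoid anchor $(m,\pr_P)\colon \GG\pltimes P\to P\times P$ into $J^1\Diffloc(P)$, restricted to $\GG_\C$ (resp.\ $\GG_{\C,\pi}$) via the internal (resp.\ Pfaffian) symmetry condition. The only difference is cosmetic: where you verify multiplicativity and the Cartan-distribution compatibility of $\Psi$ by hand, the paper simply cites Lemma \ref{lem:MorphismsOfPfaffianGroupoids}(1) for the induced morphism $m_*$ of first-jet Pfaffian groupoids.
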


\begin{proof}
	We start with the map on the left. The anchor $(m, \pr_P)\: \GG\pltimes P\to P\times P$ is a morphism of Pfaffian groupoids (with their entire tangent spaces as Pfaffian distributions). By Lemma \ref{lem:MorphismsOfPfaffianGroupoids}, it differentiates to a morphism of Pfaffian groupoids 
	\[
	m_*\: (J^1(\GG\pltimes P),\HH^1)\to (J^1\Diffloc(P), \HH^1).
	\]
	Because $(\GG, \HH)$ acts by internal symmetries, this map restricts to 
	\[
	m_*\: J^1_{\pr^*_\GG\HH}(\GG\pltimes P)\to \GG_\C,
	\]
	which is a morphism of Pfaffian groupoids. If $(\GG, \HH)$ acts by internal symmetries, the restriction of $m_*$ to $J^1_{\pr_\GG^*\HH}(\GG\pltimes P)$ takes values in $\GG_{\C, \pi}$.
	
	The map on the right is part of Lemma \ref{lem:ActionPfaffianGroupoid}.
\end{proof}

\begin{corollary}\label{cor:ActionBySymmetriesAndProlongation}
	Let $(\GG, \HH)\paction (P, \C, \pi)$ be a partial action by internal symmetries. Then:
	\begin{enumerate}
		\item The partial prolongation $J^1_\HH \GG$ acts on $J^1_\C P$ by Pfaffian symmetries.
		\item If $(\GG, \HH)$ is 2-integrable and $(P, \C, \pi)$ is 1-integrable, then $\GG^{(1)}$ acts on $P^{(1)}$ by Pfaffian symmetries.
	\end{enumerate}
\end{corollary}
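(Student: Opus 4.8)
The plan is to assemble the result from the morphisms established in Lemma \ref{lem:MorphismIntoGroupoidOfPfaffianSymmetries} and Lemma \ref{lem:ActionPfaffianGroupoid}, together with the partial action $\GG_\C \paction J^1_\C P$ and its restriction to $\GG_{\C,\pi} \curvearrowright J^1_\C P$ described before Proposition \ref{prop:InternalGroupoidProlongation}. For (1), I would first recall that $J^1_\HH \GG = J^1_{\HH}\GG \pltimes P$ restricted along the unit, but more directly: Lemma \ref{lem:MorphismIntoGroupoidOfPfaffianSymmetries} gives a morphism of Pfaffian groupoids $(J^1_\HH\GG)\pltimes P \to (\GG_\C, \HH^1)$, and Proposition \ref{prop:InternalGroupoidProlongation}(1) says $\GG_\C$ acts by Pfaffian symmetries on $(J^1_\C P, \C^1, \pi^{(1)})$ via a partial action. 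Pulling back this partial action along the morphism (using the functoriality of actions by Pfaffian symmetries under Pfaffian groupoid morphisms, Lemma \ref{lem:ActionByPfaffianSymmetriesAndMorphisms}, and the fact that a morphism $\GG_1 \to \GG_2$ transports partial actions as noted in Section \ref{sec:PartialActionsAndPfaffianGroupoids}) yields a partial action of $(J^1_\HH\GG)\pltimes P$ on $J^1_\C P$ by Pfaffian symmetries. The remaining point for (1) is that this partial action of $(J^1_\HH\GG)\pltimes P$ is actually \emph{complete}, i.e. that the domain is everything, so that it descends to a genuine (partial in $P$, but complete in $\GG$) action $J^1_\HH\GG \paction J^1_\C P$. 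This should follow because the composite morphism lands in $\GG_{\C,\pi}$ (the last sentence of Lemma \ref{lem:MorphismIntoGroupoidOfPfaffianSymmetries} applied with the action by internal symmetries promoted using that Pfaffian symmetries of $J^1_\C P$ automatically satisfy the transversality needed), and $\GG_{\C,\pi}$ acts \emph{completely} on $J^1_\C P$ — the transversality obstruction that made $\GG_\C \paction J^1_\C P$ merely partial disappears once one also preserves $\C^\pi$, exactly as in the discussion of prolongations of Pfaffian symmetries in Section \ref{subsec:SymmetriesOfPfaffianFibrations}.

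For (2), the idea is to repeat the argument one level up, using the full prolongation instead of the partial one. Under the hypothesis that $(\GG, \HH)$ is $2$-integrable, Lemma \ref{lem:ActionPfaffianGroupoid}(4) (or rather its analogue at the level of $\GG^{(1)}$ versus $(\GG \pltimes P)^{(1)}$, plus Lemma \ref{lem:MorphismsOfPfaffianGroupoids}(3)) produces a morphism of Pfaffian groupoids $\GG^{(1)} \pltimes P \to \GG_\C^{(1)}$, and the reduced groupoid $\overline{\GG_\C} = p_1(\GG_\C^{(1)})$ acts on $P^{(1)}$ by Proposition \ref{prop:InternalGroupoidProlongation}(2). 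Chaining: $\GG^{(1)} \pltimes P \to \GG_\C^{(1)} \to \overline{\GG_\C} \curvearrowright P^{(1)}$, and the composite of the first two maps factors through $\overline{\GG_\C}$ because $\GG$ being $2$-integrable forces the image of $\GG^{(1)}\pltimes P$ under $m_*^{(1)}$ to consist of jets admitting a symmetric prolongation, hence to land in $\GG_\C^{(1)}$ and after projection in $\overline{\GG_\C}$. Pulling the $\overline{\GG_\C}$-action on $P^{(1)}$ back along $\GG^{(1)}\pltimes P \to \overline{\GG_\C}$ gives a partial action of $\GG^{(1)}\pltimes P$, and — just as in (1) — completeness in the $\GG^{(1)}$-direction follows because the relevant jets lie in the Pfaffian (not merely internal) symmetry groupoid of $(P^{(1)}, \C^1, \pi^{(1)})$, so no transversality is lost; the $1$-integrability of $(P, \C, \pi)$ is what guarantees $P^{(1)}$ is a smooth manifold on which everything is defined. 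Finally one checks the two distributional inclusions of Equation (\ref{eq:DefinitionGroupoidActionByPfaffianSymmetries}) for the resulting action, but these are inherited verbatim from the corresponding inclusions for the $\overline{\GG_\C}$-action via the morphism, using that morphisms of Pfaffian groupoids intertwine the Cartan distributions $\HH^1$.

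The main obstacle I expect is bookkeeping around \emph{partial versus complete}: one must verify carefully that although $\GG_\C \paction J^1_\C P$ and $\overline{\GG_\C}\paction P^{(1)}$ are genuinely partial (the domain where $T\pi \circ T\varphi \circ h_x$ is invertible is proper in general), the \emph{pullback} along $(J^1_\HH\GG)\pltimes P \to \GG_{\C,\pi}$ (resp. $\GG^{(1)}\pltimes P \to \GG_{\C,\pi}^{(1)} \cap \overline{\GG_\C}$) lands in the locus where invertibility is automatic, so that one obtains a clean statement ``$J^1_\HH\GG$ acts by Pfaffian symmetries'' rather than ``$J^1_\HH\GG$ partially acts.'' The resolution is the observation made in Section \ref{subsec:SymmetriesOfPfaffianFibrations}: for a diffeomorphism preserving \emph{both} $\C$ and $\C^\pi$, the transversality $T_x\varphi(H_x) + \C^\pi_{\varphi(x)} = \C_{\varphi(x)}$ is automatic, hence the prolongation is everywhere defined. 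Since the composite morphisms in Lemma \ref{lem:MorphismIntoGroupoidOfPfaffianSymmetries} take values in $\GG_{\C,\pi}$ under an action by Pfaffian symmetries — and, crucially, \emph{any} action by internal symmetries becomes an action by Pfaffian symmetries once prolonged (Corollary \ref{cor:ActionBySymmetriesAndProlongation} is precisely this bootstrapping, so there is no circularity if one is careful to invoke Proposition \ref{prop:InternalGroupoidProlongation}(1), which is the base case) — the pullback action is complete. Everything else is a diagram chase through the morphisms already constructed.
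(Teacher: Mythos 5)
Your assembly of the lemmas is exactly the paper's proof: the morphism $(J^1_\HH\GG)\pltimes P \to \GG_\C$ from Lemma \ref{lem:MorphismIntoGroupoidOfPfaffianSymmetries}, the partial action $\GG_\C \paction J^1_\C P$ by Pfaffian symmetries from Proposition \ref{prop:InternalGroupoidProlongation}(1), transport along morphisms via Lemma \ref{lem:ActionByPfaffianSymmetriesAndMorphisms}, and for (2) the passage through $\GG^{(2)}$, $\GG_\C^{(1)}$ and $\overline{\GG_\C}$ using Lemmas \ref{lem:MorphismsOfPfaffianGroupoids} and \ref{lem:ActionPfaffianGroupoid} together with Proposition \ref{prop:InternalGroupoidProlongation}(2). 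That part is correct and is precisely what the paper does.

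The one genuine problem is your ``remaining point for (1)'': the claim that the resulting action of $J^1_\HH\GG$ on $J^1_\C P$ is \emph{complete}. It is not, and the statement does not require it --- the introduction states item (1) explicitly as a \emph{partial} action, and the definition of an action by Pfaffian symmetries (Equation (\ref{eq:DefinitionGroupoidActionByPfaffianSymmetries})) is formulated for partial action maps. Your argument for completeness also does not go through: the final sentence of Lemma \ref{lem:MorphismIntoGroupoidOfPfaffianSymmetries}, which places the image in $\GG_{\C,\pi}$, has as hypothesis that the \emph{base} action $(\GG,\HH)\paction(P,\C,\pi)$ is by Pfaffian symmetries, whereas here it is only by internal symmetries; so the composite lands in $\GG_\C$, whose action on $J^1_\C P$ is genuinely partial (the invertibility of $T\pi\circ T\varphi\circ h_x$ can fail). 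The fact that the \emph{prolonged} action is by Pfaffian symmetries of $(J^1_\C P,\C^1,\pi^{(1)})$ does not retroactively remove the transversality obstruction that defines its domain; it only guarantees that no further partiality appears upon prolonging again. The ``bootstrapping'' you gesture at is, as you half-suspect, circular. Drop the completeness claim and the rest of your argument stands.
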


\begin{proof}
	The corollary follows from combining Lemma \ref{lem:MorphismIntoGroupoidOfPfaffianSymmetries}, Lemma \ref{lem:ActionByPfaffianSymmetriesAndMorphisms}, Lemma \ref{lem:MorphismsOfPfaffianGroupoids}, Lemma \ref{lem:ActionPfaffianGroupoid} and Proposition \ref{prop:InternalGroupoidProlongation}. 
\end{proof}

\subsection{Actions by symmetries and the relative algebroid}

Finally, we are in a position where we can state how groupoid actions by symmetries of Pfaffian fibrations interact with the relative algebroid.

\begin{theorem}\label{thm:ActionsByPfaffianSymmetriesInvariance}
	Let $(\GG, \HH) \curvearrowright (P, \C, \pi)$ be an action by Pfaffian symmetries. If $(\GG, \HH)$ is 1-integrable, then the action $(\GG^{(2)}, \GG^{(1)}) \curvearrowright (\pi^*TX, \onabla, \D^\C)$ is by symmetries of relative algebroids. 
	
	If the representation $\GG^{(1)} \curvearrowright \pi^*TX$ descends to $\GG \curvearrowright \pi^*TX$, then there is an induced representation by symmetries of relative algebroids $(\overline{\GG^{(1)}}, \GG) \curvearrowright (\pi^*TX, \onabla, \D^\C)$. 
\end{theorem}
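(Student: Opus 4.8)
The plan is to combine the pointwise algebraic statement already proved for the groupoids of symmetries (Theorem~\ref{thm:PfaffianDerivationInvariant}) with the pullback-of-structure machinery from Lemma~\ref{lem:MorphismIntoGroupoidOfPfaffianSymmetries} and the representation formalism of Appendix~\ref{sec:RelativeAlgebroids}. The key observation is that an action by Pfaffian symmetries of $(\GG,\HH)$ on $(P,\C,\pi)$ is, up to the morphism $m_*$, nothing more than a morphism of Pfaffian groupoids into $(\GG_{\C,\pi},\HH^1)$, and the latter groupoid \emph{already} acts on the underlying relative algebroid by symmetries.

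First I would make precise the representation $(\GG^{(2)},\GG^{(1)})\curvearrowright(\pi^*TX,\onabla,\D^\C)$. Since $(\GG,\HH)$ is $1$-integrable (I will need, as in the statement, $2$-integrability of $\GG$ to form $\GG^{(2)}$ and to make the bracket-preservation go through; this should be stated as a hypothesis), the action $\GG\pltimes P$ prolongs: Lemma~\ref{lem:ActionPfaffianGroupoid} and Lemma~\ref{lem:MorphismsOfPfaffianFibrations}/\ref{lem:MorphismsOfPfaffianGroupoids} give morphisms of Pfaffian groupoids
\[
	(J^2_{\HH}\GG)\pltimes P \longrightarrow J^2_{\pr_\GG^*\HH}(\GG\pltimes P)\xrightarrow{\ (m_*)^{(1)}\ } \GG_{\C,\pi}^{(1)},
\]
obtained by prolonging the morphisms of Lemma~\ref{lem:MorphismIntoGroupoidOfPfaffianSymmetries} once more, together with the canonical map $\GG^{(2)}\pltimes P\to(\GG\pltimes P)^{(2)}$ when the relevant spaces are smooth (this is where $2$-integrability of $\GG$ and $1$-integrability of $(P,\C,\pi)$ enter, via Corollary~\ref{cor:ActionBySymmetriesAndProlongation}). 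Composing, $\GG^{(2)}$ maps into $\GG_{\C,\pi}^{(1)}$ (covering the map $\GG^{(1)}\to\GG_{\C,\pi}$), and pulling back the canonical representation $(\GG_{\C,\pi}^{(1)},\GG_{\C,\pi})\curvearrowright(\pi^*TX,\onabla,\D^\C)$ of Theorem~\ref{thm:PfaffianDerivationInvariant} along this morphism yields the desired representation by symmetries of relative algebroids. The compatibility with $\onabla$ (flat foliated bundle structure) and with $\D^\C$ is then inherited directly from Theorem~\ref{thm:PfaffianDerivationInvariant}, by functoriality of pullback of representations (Appendix~\ref{sec:RepresentationOnRelativeAlgebroids}); no new computation is needed.

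For the second assertion, suppose the representation $\GG^{(1)}\curvearrowright\pi^*TX$ descends to a representation of $\GG$ itself on $\pi^*TX$. Since the algebroid structure maps $\onabla$ and $\D^\C$ are preserved by $\GG^{(1)}$ and depend only on the action on $\pi^*TX$ (not on the jet used to define it), the invariance statements of Theorem~\ref{thm:ActionsByPfaffianSymmetriesInvariance}(2) factor through the descended action; but preserving $\D^\C$ still requires keeping track of \emph{one} order of jets because the bracket dual to $\D^\C$ is only preserved by $J^2\Diffloc(P)$-type data, exactly as in the proof of Proposition~\ref{prop:InternalGroupoidProlongation}(2). Hence the correct statement is that the \emph{reduced} prolongation $\overline{\GG^{(1)}}$ (or equivalently the image of $\GG^{(1)}$ under the descent map) acts, giving a representation $(\overline{\GG^{(1)}},\GG)\curvearrowright(\pi^*TX,\onabla,\D^\C)$ by symmetries of relative algebroids. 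I would verify this by pushing the invariance of $\D^\C$ under $\GG^{(1)}$ through the descent quotient, checking that the induced map is well defined on $\overline{\GG^{(1)}}$.

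The main obstacle I anticipate is bookkeeping the partial (as opposed to global) nature of the prolonged actions: Corollary~\ref{cor:ActionBySymmetriesAndProlongation} only gives a \emph{partial} action of $\GG^{(1)}$ (resp. a genuine action once $\GG$ is $2$-integrable and $(P,\C,\pi)$ is $1$-integrable), so one must be careful that the domain of definition of the prolonged action is large enough to make the pullback of the representation well defined over all of the relevant prolongation space — and, in the second part, that descent of the representation is compatible with passing to $\overline{\GG^{(1)}}$. Once the domains are matched up, everything reduces to the already-established pointwise identities \eqref{eq:ConnectionPreserved} and \eqref{eq:BracketPreserved}.
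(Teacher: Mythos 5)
Your proposal is correct and follows essentially the same route as the paper: both pass through the morphism of prolongation pairs $(\GG^{(2)}\pltimes P,\GG^{(1)}\pltimes P)\to(\GG_{\C,\pi}^{(1)},\GG_{\C,\pi})$ furnished by Lemmas \ref{lem:MorphismIntoGroupoidOfPfaffianSymmetries} and \ref{lem:MorphismsOfPfaffianGroupoids}, pull back the canonical representation of Theorem \ref{thm:PfaffianDerivationInvariant}, and handle descent via $\overline{\GG^{(1)}}$. Your observation that the hypothesis should read $2$-integrable (to form $\GG^{(2)}$, matching the version stated in the introduction) is a correct catch rather than a gap.
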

\begin{proof}
	Lemma \ref{lem:MorphismIntoGroupoidOfPfaffianSymmetries} and Lemma \ref{lem:MorphismsOfPfaffianGroupoids} imply there are groupoid morphisms $(\GG^{(2)} \ltimes P, \GG^{(1)} \ltimes P) \to (\GG_{\C, \pi}^{(1)}, \GG_{\C, \pi})$, which is a morphism of prolongation pairs. By Theorem \ref{thm:PfaffianDerivationInvariant}, the derivation $\D^\C$ is invariant for the representation $(\GG^{(1)}_\C, \GG_\C)\curvearrowright (\pi^*TX, \onabla)$, so the induced representation
	\[
		(\GG^{(2)}, \GG^{(1)}) \curvearrowright (\pi^*TX, \onabla, \D^\C)
	\]
	is by symmetries of relative algebroids.
	
	If the representation descends to $\GG \curvearrowright \pi^*TX$, then the representation $\GG^{(2)} \curvearrowright \DD^1_{(\pi^*TX, \onabla)}$ descends to $\overline{\GG^{(1)}} \curvearrowright \DD^1_{(\pi^*TX, \onabla)}$ and $\D^\C$ remains invariant for this representation (but it might fail to be invariant for the entire representation $\GG^{(1)}\curvearrowright \DD^1_{(\pi^*TX, \onabla)}$ when $\overline{\GG^{(1)}}\subsetneq \GG^{(1)}$).
\end{proof}

\begin{corollary}
	Let $(\GG, \HH) \paction (P, \C, \pi)$ be a partial action by internal symmetries. Suppose that $\GG$ is 2-integrable. Then the action
	\[
		(\overline{\GG^{(2)}}\pltimes J^1_\C P, \GG^{(1)} \pltimes J^1_\C P ) \curvearrowright ((\pi^{(1)})^*TX, \onabla, \D^{\C^1})
	\]
	is a representation by symmetries of relative algebroids. If $P$ is 1-integrable, then it restricts to a representation by symmetries of relative algebroids
	\[
		(\overline{\GG^{(2)}}\pltimes P^{(1)}, \GG^{(1)} \pltimes P^{(1)} ) \curvearrowright ((\pi^{(1)})^*TX, \onabla, \D^{\C^1}).
	\]
	
\end{corollary}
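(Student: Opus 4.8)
The plan is to feed the prolonged actions of Corollary~\ref{cor:ActionBySymmetriesAndProlongation} into the invariance machinery of Theorem~\ref{thm:ActionsByPfaffianSymmetriesInvariance}, and then to identify the relevant relative algebroid with Theorem~\ref{thm:PfaffianFibrationsAsRelativeAlgebroids}. First I would record the objects. By Corollary~\ref{cor:ActionBySymmetriesAndProlongation}(1), the partial action $(\GG,\HH)\paction(P,\C,\pi)$ by internal symmetries induces a partial action of the partial prolongation $(J^1_\HH\GG,\HH^1)$ on $(J^1_\C P,\C^1,\pi^{(1)})$ by Pfaffian symmetries; restricting along the inclusion of sub-Pfaffian-groupoids $\GG^{(1)}\subseteq J^1_\HH\GG$ (the full prolongation sitting inside the partial one) gives a partial action $(\GG^{(1)},\HH^1)\paction(J^1_\C P,\C^1,\pi^{(1)})$ by Pfaffian symmetries. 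That $\GG$ is $2$-integrable is exactly what makes $\GG^{(1)}$ be $1$-integrable, so $\GG^{(2)}=(\GG^{(1)})^{(1)}$ and its reduction $\overline{\GG^{(2)}}$ are well-defined; and by Theorem~\ref{thm:PfaffianFibrationsAsRelativeAlgebroids} the relative algebroid underlying $(J^1_\C P,\C^1,\pi^{(1)})$ is exactly $((\pi^{(1)})^*TX,\onabla,\D^{\C^1})$.

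Next I would run the argument of Theorem~\ref{thm:ActionsByPfaffianSymmetriesInvariance} on the partial action $(\GG^{(1)},\HH^1)\paction(J^1_\C P,\C^1,\pi^{(1)})$. That argument only invokes Lemmas~\ref{lem:MorphismIntoGroupoidOfPfaffianSymmetries}, \ref{lem:MorphismsOfPfaffianGroupoids}, \ref{lem:ActionPfaffianGroupoid} and the pointwise statement Theorem~\ref{thm:PfaffianDerivationInvariant}, all of which apply to partial actions (Lemma~\ref{lem:ActionPfaffianGroupoid} is phrased directly for $\pltimes$). Concretely, Lemma~\ref{lem:MorphismIntoGroupoidOfPfaffianSymmetries} applied to this action yields a morphism into the groupoid of Pfaffian symmetries of $(J^1_\C P,\C^1,\pi^{(1)})$; combining with Lemmas~\ref{lem:MorphismsOfPfaffianGroupoids} and \ref{lem:ActionPfaffianGroupoid} and prolonging produces a morphism of prolongation pairs
\[
\bigl(\overline{\GG^{(2)}}\pltimes J^1_\C P,\ \GG^{(1)}\pltimes J^1_\C P\bigr)\longrightarrow\bigl(\GG^{(1)}_{\C^1,\pi^{(1)}},\ \GG_{\C^1,\pi^{(1)}}\bigr).
\]
Pulling back the canonical representation on $\DD^1_{((\pi^{(1)})^*TX,\onabla)}$ along this morphism and using the invariance of $\D^{\C^1}$ from Theorem~\ref{thm:PfaffianDerivationInvariant} (applied to $(J^1_\C P,\C^1,\pi^{(1)})$) then gives the first assertion.

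For the refinement, assume $P$ is $1$-integrable. Then $P^{(1)}$ is a smooth sub-Pfaffian-fibration of $(J^1_\C P,\C^1,\pi^{(1)})$ --- both are PDEs inside $J^1\pi$ carrying the restricted Cartan distribution, with $P^{(1)}\subseteq J^1_\C P$ --- so the inclusion $P^{(1)}\hookrightarrow J^1_\C P$ is a morphism of Pfaffian fibrations covering $\id_X$. By Proposition~\ref{prop:NaturalityOfPfaffianRelativeAlgebroid} it induces a fibrewise-isomorphic morphism of relative algebroids, which is simply the restriction of $((\pi^{(1)})^*TX,\onabla,\D^{\C^1})$ to $P^{(1)}$. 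By Corollary~\ref{cor:ActionBySymmetriesAndProlongation}(2) (using $\GG$ $2$-integrable and $P$ $1$-integrable), $\GG^{(1)}$ restricts to an action on $P^{(1)}$ and $\overline{\GG^{(2)}}$ restricts accordingly, so the representation above restricts along $P^{(1)}\hookrightarrow J^1_\C P$ to the one claimed.

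The step I expect to be the main obstacle is pinning down which prolongation groupoids of $\GG$ actually appear --- $\GG^{(1)}$ on the flat foliated bundle and the \emph{reduced} $\overline{\GG^{(2)}}$ on $\DD^1_{((\pi^{(1)})^*TX,\onabla)}$ --- and in particular why it is $\overline{\GG^{(2)}}$ rather than all of $\GG^{(2)}$. This is the curvature-preservation phenomenon already isolated in Proposition~\ref{prop:InternalGroupoidProlongation}(2): a jet is compatible with the first-order (curvature) constraints cutting out $\GG^{(1)}_{\C^1,\pi^{(1)}}$ only when it admits a symmetric completion, and the locus of such jets is precisely $\overline{\GG^{(2)}}$. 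Verifying that the chain of Lemmas~\ref{lem:MorphismIntoGroupoidOfPfaffianSymmetries}, \ref{lem:MorphismsOfPfaffianGroupoids}, \ref{lem:ActionPfaffianGroupoid} assembles into a genuine morphism of prolongation pairs under the stated integrability hypotheses is the technical heart.
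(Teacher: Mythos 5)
Your overall route is the intended one: the paper leaves this corollary without proof, and it is meant to follow by feeding the prolonged action of Corollary \ref{cor:ActionBySymmetriesAndProlongation} into Theorem \ref{thm:ActionsByPfaffianSymmetriesInvariance} and then restricting to $P^{(1)}$; your treatment of the second assertion (restriction along $P^{(1)}\hookrightarrow J^1_\C P$) is fine. The gap is in the mechanism you give for the first assertion. You claim that Lemmas \ref{lem:MorphismIntoGroupoidOfPfaffianSymmetries}, \ref{lem:MorphismsOfPfaffianGroupoids}, \ref{lem:ActionPfaffianGroupoid}, applied to the action $(\GG^{(1)},\HH^1)\paction (J^1_\C P,\C^1,\pi^{(1)})$, assemble into a morphism of prolongation pairs $\bigl(\overline{\GG^{(2)}}\pltimes J^1_\C P,\ \GG^{(1)}\pltimes J^1_\C P\bigr)\to\bigl(\GG^{(1)}_{\C^1,\pi^{(1)}},\ \GG_{\C^1,\pi^{(1)}}\bigr)$ and that the representation is pulled back along it. Those lemmas produce morphisms one jet level higher than this: the map into $\GG_{\C^1,\pi^{(1)}}$ has source $(J^1_{\HH^1}\GG^{(1)})\pltimes J^1_\C P$ (hence $\GG^{(2)}\pltimes J^1_\C P$ after restriction), not $\GG^{(1)}\pltimes J^1_\C P$, and the map into $\GG^{(1)}_{\C^1,\pi^{(1)}}$ lives at the level of $(\GG^{(1)})^{(2)}=\GG^{(3)}$. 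In fact no canonical map $\GG^{(1)}\pltimes J^1_\C P\to\GG_{\C^1,\pi^{(1)}}$ exists: an element $j^1_{\mu(y)}\sigma\in\GG^{(1)}$ does not determine the first jet at $h\in J^1_\C P$ of the prolonged translation $L_\sigma^{(1)}$ (that requires the $2$-jet of $\sigma$). So running the first statement of Theorem \ref{thm:ActionsByPfaffianSymmetriesInvariance} verbatim on the prolonged action yields the pair $(\GG^{(3)},\GG^{(2)})$, not the pair $(\overline{\GG^{(2)}},\GG^{(1)})$ asserted in the corollary.

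What bridges this jet-level discrepancy is the \emph{second} statement of Theorem \ref{thm:ActionsByPfaffianSymmetriesInvariance}, which you never invoke. Since $(\pi^{(1)})^*TX\cong p_1^*(\pi^*TX)$ and $p_1\circ L_\sigma^{(1)}=L_\sigma\circ p_1$, the induced map on $\C^1/(\C^1)^{\pi^{(1)}}$ depends only on $j^1\sigma$; hence the representation of $(\GG^{(1)})^{(1)}=\GG^{(2)}$ on $(\pi^{(1)})^*TX$ descends to $\GG^{(1)}$ --- exactly the situation of Remark \ref{rk:quotients}, since the prolonged action is the prolongation of an action by internal symmetries. Applying the descent clause of Theorem \ref{thm:ActionsByPfaffianSymmetriesInvariance} to $\GG^{(1)}\paction J^1_\C P$ (with $\GG^{(1)}$ $1$-integrable because $\GG$ is $2$-integrable) then produces precisely the pair $\bigl(\overline{(\GG^{(1)})^{(1)}},\GG^{(1)}\bigr)=\bigl(\overline{\GG^{(2)}},\GG^{(1)}\bigr)$, and it also answers the question you flagged but left open: the reduced groupoid $\overline{\GG^{(2)}}$ appears because invariance of $\D^{\C^1}$ is only guaranteed on the image of the higher prolongation under $p_1$ (your ``symmetric completion'' intuition via Proposition \ref{prop:InternalGroupoidProlongation} is the right phenomenon, but it is the descent of the $\DD^1_{((\pi^{(1)})^*TX,\onabla)}$-representation, not the morphism of prolongation pairs you wrote, that turns it into a proof). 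As it stands, your argument establishes the corollary only with $(\GG^{(3)},\GG^{(2)})$ in place of $(\overline{\GG^{(2)}},\GG^{(1)})$, so the key step of the statement is missing.
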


\begin{remark}
	The reduced Pfaffian groupoids suggests that there are explicit relations between the intrinsic torsion and curvature of the Pfaffian groupoid and the failure of the derivation to be invariant, so we suspect that the statements can be improved. As currently stated, however, the results follow naturally from our setup, making the proofs short. 
\end{remark}

\begin{corollary}\label{cor:DerivationPreservesInvariantSections}
	If $(\GG, \HH)\curvearrowright (P, \C, \pi)$ is an action by Pfaffian symmetries by a 2-integrable Pfaffian groupoid $(\GG, \HH)\toto M$, then $\D^\C$ preserves $\GG^{(1)}$-invariant sections:
	\[
	\D^\C\: \left(\Omega^1_{(\pi^*TX, \onabla)}\right)^{\GG^{(1)}}\to \left(\Omega^1_{\pi^*TX}\right)^{\GG^{(1)}}.
	\]
\end{corollary}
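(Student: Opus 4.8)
The plan is to obtain Corollary \ref{cor:DerivationPreservesInvariantSections} as a formal consequence of Theorem \ref{thm:ActionsByPfaffianSymmetriesInvariance} together with the general principle that an invariant relative derivation intertwines the induced actions on flat forms and on arbitrary forms. First I would invoke Theorem \ref{thm:ActionsByPfaffianSymmetriesInvariance}: since $(\GG,\HH)$ is 2-integrable, the pair $(\GG^{(2)},\GG^{(1)})$ acts on the relative algebroid $(\pi^*TX,\onabla,\D^\C)$ by symmetries. Unwinding the definition of such a representation (Section \ref{sec:RepresentationOnRelativeAlgebroids}), this says precisely that $\GG^{(1)}$ acts on the flat foliated bundle $(\pi^*TX,\onabla)$, that the induced representation on $\DD^1_{(\pi^*TX,\onabla)}$ lives at the level of $\GG^{(2)}$, and that $\D^\C\in\Gamma(\DD^1_{(\pi^*TX,\onabla)})$ is an invariant section for it.

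The core of the argument is then a one-line equivariance computation. Recall that the representation on $\DD^1_{(\pi^*TX,\onabla)}$ is built so that the tautological action of a derivation on forms is equivariant: for $g\in\GG^{(2)}$ lying over $g^{(1)}\in\GG^{(1)}$, and for any $\D$ in the fibre of $\DD^1_{(\pi^*TX,\onabla)}$ and any flat form $\alpha$ at the appropriate point,
\[
 g^{(1)}\cdot(\D\alpha) = (g\cdot\D)\big(g^{(1)}\cdot\alpha\big),
\]
where $g^{(1)}$ acts on $\Omega^\bullet_{\pi^*TX}$ via the underlying vector-bundle action and on $\Omega^\bullet_{(\pi^*TX,\onabla)}$ via the flat foliated representation (these being compatible with the inclusion of flat forms). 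Taking $\D=\D^\C$ and $\alpha\in(\Omega^1_{(\pi^*TX,\onabla)})^{\GG^{(1)}}$, the invariance of $\D^\C$ and of $\alpha$ collapse the right-hand side, yielding $g^{(1)}\cdot(\D^\C\alpha)=\D^\C\alpha$. Since every $g^{(1)}\in\GG^{(1)}$ lifts to some $g\in\GG^{(2)}$ — this is exactly where 2-integrability enters, as then $\GG^{(2)}\to\GG^{(1)}$ is a surjective affine bundle — we conclude that $\D^\C\alpha\in(\Omega^1_{\pi^*TX})^{\GG^{(1)}}$, which is the claim.

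I do not expect a real obstacle; the substantive content is already packaged in Theorem \ref{thm:ActionsByPfaffianSymmetriesInvariance}, and what remains is bookkeeping with the appendix conventions. The one point that needs genuine care is interpretive rather than computational: making explicit that ``$\GG^{(1)}$-invariant section'' refers to invariance under the action groupoid $\GG^{(1)}\ltimes P\toto P$ (transported through the morphism into $\GG_{\C,\pi}$ of Lemma \ref{lem:MorphismIntoGroupoidOfPfaffianSymmetries} and the representation of Theorem \ref{thm:PfaffianDerivationInvariant}), and checking that this is the same action that appears in the equivariance identity above once it is restricted from flat forms to all of $\Omega^\bullet_{\pi^*TX}$. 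After that identification the proof is two lines. Finally, I would append a remark in the same spirit as the discussion following Theorem \ref{thm:ActionsByPfaffianSymmetriesInvariance}: when the $\GG^{(1)}$-representation on $\pi^*TX$ descends to a $\GG$-representation (for instance for prolongations of actions by internal symmetries), the identical argument shows that $\D^\C$ preserves $\GG$-invariant sections, so it descends to the quotient relative algebroid whenever that quotient exists.
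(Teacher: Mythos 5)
Your proposal is correct and matches the paper's intent: the corollary is stated without proof precisely because it is the immediate unwinding of Theorem \ref{thm:ActionsByPfaffianSymmetriesInvariance} via the equivariance formula $g\cdot\D_{s(g)} = L_g\circ\D_{s(g)}\circ(L_g)^{-1}$ from Proposition \ref{prop:RepresentationsOfFoliatedVectorBundles}, together with surjectivity of $\GG^{(2)}\to\GG^{(1)}$ supplied by 2-integrability. Your identification of where 2-integrability enters and of the need to match the $\GG^{(1)}$-action on forms with the one coming through $\GG_{\C,\pi}$ is exactly the right bookkeeping.
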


\begin{remark}\label{rk:quotients}
	In practice, the action $\GG^{(1)}\curvearrowright \pi^*TX$ descends to an action $\GG\curvearrowright \pi^*TX$. This happens for example already after the first prolongation an action by internal symmetries. In that case, the derivation preserves $\GG$-invariant sections, which means it can descend to a derivation on the quotient!
\end{remark}

\subsection{Remarks on Pfaffian actions and PDEs with symmetries}\label{sec:RemarksOnPfaffianActionsAndPDEsWithSymmetries}

In this section we show that not all PDEs with symmetries are captured by a Pfaffian action in the sense of \cite{AccorneroCattafiCrainicSalazarBook, Cattafi2020}. Pfaffian actions arise naturally in the context of geometric structures. However, the example below illustrates that the study of PDEs with symmetries from a Pfaffian perspective demands a broader notion of symmetry.

\subsubsection{Pfaffian groupoids, fibrations and actions from the perspective of forms}

The notion of a Pfaffian action relies on the Pfaffian form of a Pfaffian fibration, in contrast to this article, where only the Pfaffian distribution played a role. Let us recall the definitions using Pfaffian forms.

Let $\GG\toto M$ be a Lie groupoid and $E\to M$ a representation of $\GG$. A one-form $\omega\in \Omega^1(\GG; t^*E)$ is \textbf{multiplicative} when \cite{CrainicSalazarStruchiner2015}:
\[
(m^*\omega)_{(g, h)} = \pr_1^*\omega + L_g\circ \pr_2^*\omega,
\]
for any composable pair $(g, h)\in \GG \leftindex_{s}{\times}_t \GG$. Here $L_g\: E_{s(g)}\to E_{t(g)}$ is translation, $\pr_1$ (resp. $\pr_2$) is the projection onto the left (resp. right) factor, and $m$ is the groupoid multiplication.

If $(\GG, \HH)\toto M$ is a Pfaffian groupoid in the sense of Definition \ref{def:PfaffianGroupoid}, then, after identifying $T\GG/\HH \cong t^*(T\GG/\HH\vert_M)$, the one-form $\omega_\HH\in \Omega^1(\GG;t^*E)$, with $E=T\GG/\HH \vert_M$, is multiplicative.

In this section, a \textbf{Pfaffian groupoid} $(\GG, \omega)\toto M$ is a Lie groupoid with multiplicative one-form $\omega\in \Omega^1(\GG;t^*E)$ for which $(\GG, \ker \omega)\toto M$ is a Pfaffian groupoid in the sense of Definition \ref{def:PfaffianGroupoid}.

Similarly, by a \textbf{Pfaffian fibration} $(P, \theta, \pi)$ we mean a submersion $\pi\: P\to M$ and a one-form $\theta\in \Omega^1(P;E)$ for which $(P, \ker \theta, \pi)$ is a Pfaffian fibration in the sense of Definition \ref{def:PfaffianFibration}.

\begin{example}\label{ex:DiffeomorphismPfaffianGroupoidForm}
	If $Q$ is a manifold, we denote the Cartan form on $J^1\Diffloc(Q)$ by $\omega_Q$, and it is given by
	\[
	(\omega_Q)_{j^1_x\varphi)}(W) = Tt(W) - 	T_x\varphi \circ Ts(W)\in T_{\varphi(x)} Q.
	\]
	If $\GG\subset J^1\Diffloc(Q)$ is a subgroupoid, we obtain a Pfaffian groupoid $(\GG, \omega_Q)\toto Q$ by restriction. 
\end{example}

\begin{example}\label{ex:FirstJetPfaffianForm}
	If $q\: Q\to X$ is a submersion, the first jet $\pi\:J^1q\to X$ becomes a Pfaffian fibration when equipped with the Cartan form $\theta_Q\in \Omega^1(J^1q; p_1^*\ker Tq)$ given by
	\[
	(\theta_Q)_{j^1_x\sigma} (V)= Tp_1(V) - T_x\sigma \circ T\pi(V).
	\]
\end{example}

\begin{definition}[\cite{AccorneroCattafiCrainicSalazarBook, Cattafi2020}]\label{def:PfaffianAction}
	A groupoid action of a Pfaffian groupoid $(\GG, \omega)\toto M$ on a Pfaffian fibration $(P, \theta, \pi)$ along $\mu\: P\to M$---with $\omega\in \Omega^1(\GG;t^*E)$ and $\theta\in \Omega^1(P;\mu^*E)$---is called \textbf{Pfaffian} when 
	\begin{equation}\label{eq:PfaffianAction}
		 (\pr_\GG^*\omega)_{(g, p)} = (m^*\theta)_{(g, p)} - g\cdot(\pr_P^*\theta)_{(g, p)}
	\end{equation}
	for any $(g, p)\in \GG\ltimes P$.
\end{definition}

\begin{remark}
	Equation (\ref{eq:PfaffianAction}) implies that
	\[
	Tm(\ker \omega \leftindex_{Ts}{\times}_{T\mu} \ker \theta) \subseteq \ker \theta,
	\]
	so any Pfaffian action is also an action by internal symmetries. 
\end{remark}

\subsubsection{A PDE with symmetry that is not a Pfaffian action}

Let $q\: Q\to X$ be a submersion. The \textbf{point symmetries} of $J^1q$ come from the pseudogroup $\Diffloc(q)$, generated by local diffeomorphism $\varphi\in \Diffloc(Q)$ lifting some diffeomorphism $\overline{\varphi}\in \Diffloc(X)$.
The pseudogroup $\Diffloc(q)$ is the same as the pseudogroup of diffeomorphisms preserving $\ker Tq$. There is also a natural groupoid morphism 
\[
\overline{q}\: J^1\Diffloc(q)\to J^1\Diffloc(X),
\]
sending $j^1_x\varphi$ to $j^1_{q(x)}\overline{\varphi}$, where $\varphi$ lifts $\overline{\varphi}$ in a neighborhood of $x\in Q$. Note that $\overline{q}\: J^1\Diffloc(q)\to J^1\Diffloc(X)$ is a groupoid morphism covering $q\: Q\to X$ and the Pfaffian forms satisfy $\overline{q}^*\omega_X = Tq\circ \omega_Q$. Here $\omega_Q\in \Omega^1(J^1\Diffloc(q); t^*TQ)$ and $\omega_X\in \Omega^1(J^1\Diffloc(X);t^*TX)$ are the Cartan forms (Example \ref{ex:DiffeomorphismPfaffianGroupoidForm}).

Clearly, $\Diffloc(q)$ acts on germs of sections of $q$:
\[
\varphi\cdot \sigma = \varphi \circ \sigma \circ \overline{\varphi}^{-1}
\]
where $\varphi\in \Diffloc(q)$ lifts $\overline{\varphi}\in \Diffloc(X)$. This action descends to the level of jets and gives rise to a groupoid action
\[
\xymatrix{
	(J^1\Diffloc(q), \omega_Q) \ar@<0.5ex>[d] \ar@<-0.5ex>[d] & {(J^1q, \theta_Q, \pi)} \ar@`{[l]+/l-2.5pc/+/d+1.5pc/,[l]+/l-2.5pc/+/u+1.5pc/}[0, 0]_{m} \ar[ld]^{p_1}   \\
	Q & 
}
\]
that is neither principal nor Pfaffian. Even though the coefficients of $\omega_Q$ and $\theta_Q$ can be matched through the inclusion $\ker Tq\subset TQ$, Equation (\ref{eq:PfaffianAction}) can not hold for these forms because the RHS would be surjective onto $TQ$ while the LHS only has image contained in $\ker Tq$. Nevertheless, there is still an elegant identity relating all the forms, stated in Proposition \ref{prop:BeautifulIdentity} below, which almost looks like a ``twisted Pfaffian action".

We regard $J^1q$ as the space of pointwise splittings of $Tq$:
\[
J^1q=\{ h\: T_{q(x)}X\to T_xQ \ | \ T_xq \circ h = \id\}.
\]
\begin{proposition}\label{prop:BeautifulIdentity}
	At each point $(g, h)\in J^1\Diffloc(q)\ltimes J^1q$, the following identity holds:
	\begin{equation}\label{eq:PDEWithSymmetryForms}
		\pr^*_\GG\omega_Q - (g\cdot h) \circ \left(\overline{q}^*\omega_X\right) = m^*\theta_Q - g \cdot \left(\pr_{J^1q}^*\theta_Q\right) .
	\end{equation}
\end{proposition}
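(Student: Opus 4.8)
The plan is to prove (\ref{eq:PDEWithSymmetryForms}) by a direct pointwise computation. Fix a point $(g,h)$ of $J^1\Diffloc(q)\ltimes J^1q$, write $g = j^1_y\varphi$ with $y = p_1(h) = s(g)$, and let $W = (W_\GG, W_P)$ be a tangent vector to the action groupoid at $(g,h)$, so that $Ts(W_\GG) = Tp_1(W_P)$ in $T_yQ$. The inputs are: the action formula $g\cdot h = T_y\varphi\circ h\circ(T_{q(y)}\overline\varphi)^{-1}$, which is always defined since $\overline\varphi$ is a local diffeomorphism; the relations $q\circ\varphi = \overline\varphi\circ q$, $s\circ\overline q = q\circ s$ and $t\circ\overline q = q\circ t$ expressing that $\overline q$ is a groupoid morphism over $q$, together with $\overline q^*\omega_X = Tq\circ\omega_Q$; the defining formulas for $\omega_Q$, $\omega_X$ and $\theta_Q$ from Examples \ref{ex:DiffeomorphismPfaffianGroupoidForm} and \ref{ex:FirstJetPfaffianForm}; and the canonical representations of $J^1\Diffloc(q)$ on $TQ$ and on $\ker Tq$. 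All of these depend only on $1$-jets, so the expressions below are well defined on the jet spaces. As a preliminary sanity check, applying $Tq$ to both sides of (\ref{eq:PDEWithSymmetryForms}) and using $\overline q^*\omega_X = Tq\circ\omega_Q$ and $Tq\circ(g\cdot h) = \id$ shows that both sides are $\ker Tq$-valued over $t(g) = p_1(g\cdot h)$, so the identity at least makes sense.

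The main term to handle is $m^*\theta_Q$, where $m$ is the target map of the action groupoid. From $p_1\circ m = t\circ\pr_\GG$ and $\pi\circ m = q\circ p_1\circ m = t\circ\overline q\circ\pr_\GG$ one gets $Tp_1(Tm\,W) = Tt(W_\GG)$ and $T\pi(Tm\,W) = Tt(T\overline q\,W_\GG)$, whence, by the formula for $\theta_Q$,
\[
	(m^*\theta_Q)(W) = Tt(W_\GG) - (g\cdot h)\bigl(Tt(T\overline q\,W_\GG)\bigr).
\]
Next I would use the formula for $\omega_X$ at $\overline q(g) = j^1_{q(y)}\overline\varphi$ to write $Tt(T\overline q\,W_\GG) = (\overline q^*\omega_X)_g(W_\GG) + T_{q(y)}\overline\varphi\bigl(Ts(T\overline q\,W_\GG)\bigr)$, and note that $Ts(T\overline q\,W_\GG) = Tq(Ts(W_\GG)) = Tq(Tp_1(W_P)) = T\pi(W_P)$ by the chain rule and the action-groupoid constraint. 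Combining this with the identity $(g\cdot h)\circ T_{q(y)}\overline\varphi = T_y\varphi\circ h$ yields
\[
	(m^*\theta_Q)(W) = Tt(W_\GG) - (g\cdot h)\bigl((\overline q^*\omega_X)_g(W_\GG)\bigr) - T_y\varphi\bigl(h(T\pi(W_P))\bigr).
\]

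Finally I would expand the remaining three terms directly: $(\pr_\GG^*\omega_Q)(W) = Tt(W_\GG) - T_y\varphi(Tp_1(W_P))$ from the formula for $\omega_Q$ and the constraint $Ts(W_\GG) = Tp_1(W_P)$; $\bigl((g\cdot h)\circ\overline q^*\omega_X\bigr)(W) = (g\cdot h)\bigl((\overline q^*\omega_X)_g(W_\GG)\bigr)$ by definition; and, using the $\ker Tq$-representation of $g$ on $(\theta_Q)_h(W_P)$, $\bigl(g\cdot\pr_{J^1q}^*\theta_Q\bigr)(W) = T_y\varphi\bigl(Tp_1(W_P) - h(T\pi(W_P))\bigr)$. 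Substituting these and the expression for $m^*\theta_Q$ into (\ref{eq:PDEWithSymmetryForms}), the terms $T_y\varphi(Tp_1(W_P))$ and $T_y\varphi\bigl(h(T\pi(W_P))\bigr)$ cancel on the right-hand side, and both sides reduce to $Tt(W_\GG) - T_y\varphi(Tp_1(W_P)) - (g\cdot h)\bigl((\overline q^*\omega_X)_g(W_\GG)\bigr)$, which proves the claim. I expect the only real difficulty to be organizational: tracking base points and making sure the four terms all genuinely live in the common bundle $\ker T_{t(g)}q$, and keeping the representation versus pullback conventions straight — in particular, on the left the two terms $\pr_\GG^*\omega_Q$ and $(g\cdot h)\circ\overline q^*\omega_X$ are individually only $TQ$-valued and land in $\ker Tq$ only after the cancellation above, whereas on the right $m^*\theta_Q$ and $g\cdot\pr_{J^1q}^*\theta_Q$ are each $\ker Tq$-valued.
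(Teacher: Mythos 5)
Your proposal is correct and follows essentially the same route as the paper: a direct pointwise expansion of $m^*\theta_Q$ and $g\cdot\pr_{J^1q}^*\theta_Q$ using $p_1\circ m = t\circ\pr_\GG$, $\pi\circ m = t\circ\overline{q}\circ\pr_\GG$, the action formula $g\cdot h = T\varphi\circ h\circ T\overline{\varphi}^{-1}$, and the source constraint $Ts(W_\GG)=Tp_1(W_P)$, after which the terms regroup into $\pr_\GG^*\omega_Q - (g\cdot h)\circ\overline{q}^*\omega_X$. The only cosmetic difference is that you expand $Tt(T\overline{q}\,W_\GG)$ via $\omega_X$ inside $m^*\theta_Q$ before subtracting, whereas the paper subtracts first and regroups at the end; the preliminary $Tq$-sanity check is a nice extra but not needed.
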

\begin{remark}
	Equation (\ref{eq:PfaffianAction}) states that the form of the Pfaffian groupoid becomes simplicially exact on the action groupoid. Equation (\ref{eq:PDEWithSymmetryForms}) can be interpreted similar, except that the form $\pr_\GG\omega_Q^*$ has to be ``twisted" by the term $(g\cdot h)\circ (\overline{q}^*\omega_X)$ before it becomes simplicially exact.
\end{remark}
\begin{proof}
	Recall the definition of the forms $\omega_Q$ and $\theta_Q$ from Examples \ref{ex:DiffeomorphismPfaffianGroupoidForm} and \ref{ex:FirstJetPfaffianForm}. If $\varphi$ descends to $\overline{\varphi}\in \Diffloc(X)$ in a neighborhood of $x\in Q$, then we can first compute the LHS of equation (\ref{eq:PDEWithSymmetryForms}):
	\begin{align*}
		(m^*\theta_Q)_{(g, h)}(W, V) &= T\mu\left( Tm(W, V)\right) - (g\cdot h) \circ T\pi\left( Tm(W, V)\right),\\
		&= T(\mu \circ m)(W, V) - T\varphi \circ h \circ T\overline{\varphi}^{-1}\left(T(\pi \circ m)(W, V)\right),\\
		&=Tt(W) - T\varphi \circ h \circ T\overline{\varphi}^{-1}\left(T(t\circ \overline{q})(W)\right),\\
		g\cdot (\pr^*_{J^1q}\theta_Q)_{(g, h)}(W, V)&= T\varphi \circ T\mu(V) - T\varphi \circ h \circ T\pi (V) \\
		&= T\varphi \circ Ts(W) -  T\varphi \circ h \circ Tq \circ Ts (W).
	\end{align*}
	where in the last equality we used $Ts(W) = T\mu(V)$. Taking their difference, we find:
	\begin{align*}
		(m^*\theta_Q - g\cdot(\pr_{J^1q}^*\theta_Q))(W, V) &= Tt(W) - T\varphi \circ Ts(W)\\
		&\phantom{=}-T\varphi \circ h \circ T\overline{\varphi}^{-1} \circ \left(Tt - T\overline{\varphi}\circ Ts\right) \left( T\overline{q}(W)\right)\\
		&= \pr_\GG^*\omega_Q(W, V) - (g\cdot h) \circ \pr_\GG^*\overline{q}^*\omega_X(W, V), 
	\end{align*}
	which proves the identity.
\end{proof}

\begin{remark}\label{rk:LucasRemark}
	As pointed out to the author by Luca Accornero, Equation (\ref{eq:PDEWithSymmetryForms}) still implies that
	\[
	Tm(\ker \omega_Q \leftindex_{Ts}{\times}_{Tp_1} \ker \theta_Q) \subseteq \ker \theta_Q,
	\]
	and it is also clear that the action preserves $\ker Tp_1$, so the action is an action by Pfaffian symmetries. This remark as been the main inspiration for developing the theory of symmetries of Pfaffian fibrations presented in this article.
\end{remark}

\begin{remark}
	The kernel of the groupoid map $\overline{q}\: J^1\Diffloc(q)\to J^1\Diffloc(X)$ is $J^1(Q\times_q Q)$, the first jet of the submersion groupoid for $q$. The action restricted to $J^1(Q\times_q Q)$ does become Pfaffian in the sense of Definition \ref{def:PfaffianAction}
\end{remark}

\appendix

\section{Relative algebroids}\label{sec:RelativeAlgebroids}

In this section, we give a rapid introduction to relative derivations and relative algebroids necessary to state our results. For more details, we refer to \cite{FernandesSmilde2025, Smilde2025Notes}. Only Sections \ref{subsec:DerivationsAsMapsOnFirstJets}, \ref{sec:RepresentationOnFlatFoliatedBundles} and \ref{sec:RepresentationOnRelativeAlgebroids} can not be found in the main sources.

\subsection{Derivations of a vector bundle}
Let $V\to N$ be a vector bundle, and let $\Omega^\bullet(V) := \Gamma(\wedge^\bullet V^*)$. Recall that a $k$-derivation $\D\in \Der^k(V)$ on $V$ is a linear map 
\[
\D \: \Omega^\bullet(V)\to \Omega^{\bullet + k}(V)
\]
satisfying a Leibniz rule:
\[
\D(\alpha\wedge\beta) = (\D \alpha)\wedge \beta + (-1)^{|\alpha| k }\alpha\wedge(\D \beta).
\]
Every derivation $\D\in \Der(V)$ has a symbol $\sigma(\D)\: \wedge^kV\to TN$ determined by
\[
\left \langle\D f, v_1\wedge \dots\wedge v_k\right\rangle = \left\langle\d f, \sigma(\D)(v_1, \dots, v_k)\right\rangle
\]
for $f\in C^\infty(N)$ and $v_1, \dots, v_k\in \Gamma(V)$. 

The space of derivations $\Der^k(V)$ is a $C^\infty(N)$-module and can be identified with the space of sections of a vector bundle $\DD^k_V\to N$. The symbol map induces the following exact sequence of vector bundles over $N$:
\[
0 \to \Hom(\wedge^{k+1}V, V)\to \DD^k_V \to \Hom(\wedge^kV, TN) \to 0.
\]
Any derivation $\D\in \Der^k(V)$ determines and is determined by a pair of a $k$-bracket $[\cdot, \dots, \cdot]\: \wedge^{k+1}\Gamma(V)\to \Gamma(V)$ and anchor $\rho\: \wedge^kV\to TN$ satisfying a Leibniz rule:
\[
[fv_0, \dots, v_k] = f[v_0, \dots, v_k] + (-1)^k \LL_{\rho(v_1, \dots, v_k)}(f)v_0
\]
for $f\in C^\infty(N)$ and $v_0, \dots, v_k\in \Gamma(V)$.

\subsection{Derivations relative to a submersion}
Let $V\to N$ be a vector bundle and $p\: M\to N$ a map (often a submersion). Denote by $W= p^*V$ and $p_*\: W\to V$ the canonical map. A \textbf{$k$-derivation on $V$ relative to $p$}, written $\D\in \Der^k(p_*)$, is a linear map
\[
\D\: \Omega^\bullet(V) \to \Omega^{\bullet + k}(W)
\]
satisfying the Leibniz rule with respect to $p$:
\[
\D(\alpha\wedge\beta) = (\D\alpha)\wedge p^*\beta+ (-1)^{|\alpha|k} (p^*\alpha)\wedge(\D \beta). 
\]
The \textbf{symbol} of a derivation $\D\in \Der^k(p)$ relative to $p$ is the bundle map $\sigma(\D) \: \wedge^k W\to p^*TN$ determined by
\[
\left\langle \D f, w_1\wedge \dots\wedge w_k\right\rangle = \left\langle \d f, \sigma(\D)(w_1, \dots, w_k)\right\rangle, 
\]
for $f\in C^\infty(N)$ and $w_1, \dots, w_k\in W$. 

This time, the space of relative derivation $\Der^k(p_*)$ is a $C^\infty(N)$-module and can be identified with the space of sections of the vector bundle $p^*\DD^k_V$. The symbol induces a short exact sequence
\[
0 \to \Hom(\wedge^{k+1}W, W) \to p^*\DD^k_V \xrightarrow{\sigma} \Hom(\wedge^k W, p^*TN)\to 0.
\]

As for ordinary derivations, a relative derivation $\D\in \Der^k(p_*)$ determines and is determined by a \textbf{$k$-bracket relative to $p$}, which consists of a bracket and anchor
\[
[\cdot, \dots, \cdot]\: \wedge^{k+1}\Gamma(V) \to \Gamma(W), \quad \rho\: \wedge^k W \to p^*TN
\]
satisfying the Leibniz rule:
\[
[fv_0, \dots, v_k] = p^*f [v_0, \dots, v_k] + (-1)^k \LL_{\rho(p^*v_1, \dots, p^*v_k)}(f)v_0,
\]
where $f\in C^\infty(N)$ and $v_0, \dots, v_k\in \Gamma(V)$. Note that the derivative makes sense because $\rho(v_1, \dots, v_k)$ takes values in $p^*TN$, so it's possible to differentiate a function on $N$ along $\rho(v_1, \dots, v_k)$, but the result will be a function on $M$.

Given an ordinary derivation $\D\in \Der^k(W)$, it is possible to restrict it only to the forms $p^*\alpha$ for $\alpha\in \Omega^\bullet(V)$. This determines a derivation relative to $p$. This process is $C^\infty(M)$-linear so there is an induced bundle map $p_*\: \DD^k_W\to p^*\DD^k_V$. This bundle map fits into a short exact sequence
\[
0\to \Hom(W, \ker Tp) \to \DD^k_W\xrightarrow{p_*} p^*\DD^k_V\to 0.
\]

\begin{example}\label{ex:PointWiseDerivations}
	Let $V\to M$ be a vector bundle and $i_x\: \{x\}\hookrightarrow N$ the inclusion of a point $x\in N$. A derivation relative to $i_x$ is simply a map
	\[
		\D\: \Omega^\bullet_V\to \wedge^{\bullet+ k}V^*_x.
	\]
	subject to the Leibniz rule. The bundle of derivations $\DD^k(i_x)\to \{x\}$ is isomorphic to $i_x^*\DD^k_V \cong (\DD^k_V)_x$, the fiber of the vector bundle $\DD^k_V$ above $x$. This is very practical way to deal with pointwise elements in $\DD^k_V$. 
\end{example}

\subsection{Flat foliated vector bundles}\label{sec:FlatFoliatedVectorBundles}
Let $W\to M$ be a vector bundle and $\FF\subset TM$ a foliation on $M$. An \textbf{$\FF$-connection} on $W$ is a partial connection
\[
\overline{\nabla}\: \Gamma(\FF)\times \Gamma(W)\to \Gamma(W)
\]
satisfying the usual properties of a connection. The \textbf{curvature} of an $\FF$-connection $\onabla$ is given by
\[
R_{\onabla}\in \Omega^2(\FF;\End(W)), \quad R_{\onabla}(X, Y)w = \onabla_X\onabla_Yw-\onabla_X\onabla_Yw-\onabla_{[X, Y]}w.
\]
The $\FF$-connection is called \textbf{flat} when $R_\onabla=0$. In this case we call $(W, \onabla)\to (M, \FF)$ a \textbf{flat foliated vector bundle}.

A local section $w\in \Gamma_W$ is called \textbf{flat} when $\onabla_X w=0$ for all $X\in \Gamma_\FF$. When $\onabla$ is flat, then locally there always exist a flat section through every $w\in W$, but nonzero global flat sections may fail to exist. We denote by $\Gamma_{(W, \onabla)}$ the sheaf of flat sections of $(W, \onabla)$.

A \textbf{map of foliations} between two foliated manifold $(M_1, \FF_1)$ and $(M_2, \FF_2)$ is a smooth map $\varphi\: M_1\to M_2$ such that $T\varphi(\FF_1)\subseteq \FF_2$.
Let $(W_1, \onabla^1)$ and $(W_2, \onabla^2)$ are two foliated vector bundles over $(M_1, \FF_1$ and $(M_2, \FF_2)$, respectively, a \textbf{map of foliated vector bundles} is a bundle map $(\Phi, \varphi)\: W_1\to W_2$ covering a map of foliations $\varphi$ such that 
\begin{equation}\label{eq:MapOfFoliatedVectorBundles}
	\Phi^*\left( \onabla^2_{\varphi_*(X)} \alpha\right) = \onabla^1_{X} \Phi^*\alpha,
\end{equation}
for all $\alpha\in \Omega^1(W_2)$ and $X\in \FF_1$. If both are foliated flat vector bundles, then Equation (\ref{eq:MapOfFoliatedVectorBundles}) implies that $\Phi^*$ preserves flat forms. If $\Phi$ is in addition fiberwise an isomorphism, then we get a map between flat sections
\[
\Phi^*\: \Gamma_{(W_2, \onabla^2)} \to \Gamma_{(W_1, \onabla^1)}, \quad (\Phi^*w_2)(x) = \Phi_x^{-1}(w_2(\varphi(x)))
\]
The converse is also true.

\begin{lemma}\label{lem:PerservingFlatSectionsImpliesMorphismOfFlatFoliatedBundles}
	Let $(W_1, \onabla^1)$ and $(W_2, \onabla_2)$ be flat foliated bundles over $(M_1, \FF_1)$ and $(M_2, \FF_2)$, respectively. If $(\Phi, \varphi)\:W_1 \to W_2$ is a fiberwise isomorphism that preserves flat sections, then $(\Phi, \varphi)$ is an isomorphism of flat foliated bundles.
\end{lemma}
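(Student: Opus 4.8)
The plan is to extract the two ingredients of a morphism of flat foliated bundles --- that $\varphi$ is a map of foliations and that the compatibility condition (\ref{eq:MapOfFoliatedVectorBundles}) holds --- from the single hypothesis that $\Phi$ is fiberwise invertible and carries flat sections to flat sections. The compatibility condition will follow from a routine computation in flat frames; the genuinely nontrivial point is that transversality of $\varphi$ to the foliations is \emph{not} assumed and has to be forced out of flatness.

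First I would prove $T\varphi(\FF_1)\subseteq\FF_2$ by contradiction. Suppose $X\in\FF_1|_x$ but $Y:=T_x\varphi(X)\notin\FF_2|_{\varphi(x)}$. Since $\FF_2$ is involutive, the Frobenius theorem provides a local function $g$ near $\varphi(x)$, constant along the leaves of $\FF_2$, with $\d g_{\varphi(x)}(Y)\neq 0$. Pick any local flat section $e$ of $(W_2,\onabla^2)$ with $e(\varphi(x))\neq 0$ (flat sections through any point exist because $\onabla^2$ is flat, cf. \cite{FernandesSmilde2025}); then $ge$ is again flat, as $\onabla^2_Z(ge)=\d g(Z)\,e=0$ for $Z\in\FF_2$. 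Its pullback is $\Phi^{-1}\circ(ge)\circ\varphi=(g\circ\varphi)\bigl(\Phi^{-1}\circ e\circ\varphi\bigr)$, and since $\Phi^{-1}\circ e\circ\varphi$ is flat by hypothesis while $\Phi$ is a fiberwise isomorphism, one gets $\onabla^1_X\bigl(\Phi^{-1}\circ(ge)\circ\varphi\bigr)\big|_x=\d g_{\varphi(x)}(Y)\cdot\Phi_x^{-1}\bigl(e(\varphi(x))\bigr)\neq 0$, contradicting that $\Phi$ preserves flat sections. Hence $\varphi$ is a map of foliations and both sides of (\ref{eq:MapOfFoliatedVectorBundles}) are defined.

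Then I would verify (\ref{eq:MapOfFoliatedVectorBundles}) locally. Near $\varphi(x)$ choose a flat local frame $(e_i)$ of $(W_2,\onabla^2)$ with dual (hence flat) coframe $(e^i)$; because $\Phi$ is a fiberwise isomorphism preserving flat sections, $(f_i):=\bigl(\Phi^{-1}\circ e_i\circ\varphi\bigr)$ is a flat local frame of $(W_1,\onabla^1)$ with dual flat coframe $(f^i)$, and a direct check gives $\Phi^*e^i=f^i$. Writing $\alpha=\sum_i a_i e^i$, so that $\Phi^*\alpha=\sum_i(a_i\circ\varphi)f^i$, both $\onabla^1_X\Phi^*\alpha$ and $\Phi^*\bigl(\onabla^2_{\varphi_*(X)}\alpha\bigr)$ collapse --- flatness of $(e^i)$ and $(f^i)$ kills every connection term --- to $\sum_i(\d a_i)_{\varphi(x)}\bigl(T_x\varphi(X)\bigr)\,f^i|_x$, which is exactly (\ref{eq:MapOfFoliatedVectorBundles}). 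This shows $(\Phi,\varphi)$ is a morphism of flat foliated bundles; applying the same argument to the inverse pair $(\Phi^{-1},\varphi^{-1})$, which is again fiberwise invertible and flat-section-preserving, shows it is an isomorphism.

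The hard part is the first step: there is real content in forcing $\varphi$ to respect the foliations, and the mechanism is that multiplying a flat section of $W_2$ by a leafwise-constant function detects precisely the directions transverse to $\FF_2$, so any failure of transversality of $\varphi$ would destroy the flatness of some pullback. Once this is in place, the verification of (\ref{eq:MapOfFoliatedVectorBundles}) is pure bookkeeping in flat frames.
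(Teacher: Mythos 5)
Your proposal is correct and is essentially the paper's own argument: in both, the foliation condition $T\varphi(\FF_1)\subseteq\FF_2$ is forced by multiplying flat sections of $W_2$ by leafwise-constant functions (you phrase it as a contradiction via Frobenius, the paper as a direct computation showing $\langle \d f, T\varphi(X)\rangle=0$ for all basic $f$), and the compatibility (\ref{eq:MapOfFoliatedVectorBundles}) is then a routine expansion in local flat frames, exactly as the paper does by writing an arbitrary section as a combination of flat ones. The only superfluous piece is your final appeal to the inverse pair $(\Phi^{-1},\varphi^{-1})$, which would require $\varphi$ to be invertible (not assumed); the paper simply concludes once the morphism conditions are verified, since fiberwise invertibility is all that ``isomorphism'' is meant to add here.
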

\begin{proof}
	First, we show that $\varphi$ is a map of foliations. To this end, let $w_2 \in \Gamma_{(W_2, \onabla)}$, $f\in C^\infty_{\FF_2}$ and $X\in \Gamma_{\FF_1}$. Then $fw_2$ is again a flat section, so:
	\begin{align*}
		\onabla^1_X \Phi^*(fw_2) &= \LL_X (\varphi^*f)\Phi^*w_2 = \langle\d f, T\varphi(X)\rangle \Phi^*w_2 = 0.
	\end{align*}
	Since $\langle\d f, T\varphi(X)\rangle = 0$ for any basic function $f\in C^\infty_{\FF_2}$, it follows that $T\varphi(X)\in \FF_2$. 
	
	Next, we show that $\Phi$ preserves the connections. If $w_2\in\Gamma_{W_2}$ is arbitrary, we can still write $w_2 = \sum_i f_i w_2^i$ for $f_i\in C^\infty_{M_2}$ and $w_2^i\in \Gamma_{(W_2, \onabla^2)}$. Then
	\begin{align*}
		\onabla^2_{\varphi^*(X)} \Phi^*w_2 &= \sum_i \LL_{\varphi^*(X)} (\varphi^*f_i) (\Phi^*w_2^i) = \sum_i \varphi^*(\LL_X(f_i)) \Phi^*w^i_2 = \Phi^*( \onabla^2_X w_2),
	\end{align*} 
	so $\Phi$ is indeed an isomorphism of flat foliated bundles. 
\end{proof}

\subsection{Derivations relative to foliations}\label{sec:DerivationsRelativeToFoliations} The derivations associated to a Pfaffian fibration are only locally derivations relative to a submersion and their global geometry can best be described by the notion of derivation relative to a foliation.
\begin{definition}
	Let $(W, \onabla)\to (M, \FF)$ be a flat foliated vector bundle. A \textbf{$k$-derivation relative to $\FF$}, denoted $\D\in \Der^k(W, \onabla)$, is a linear map (of sheaves)
	\[
	\D\: \Omega^\bullet_{(W, \onabla)}\to \Omega^{\bullet + k}_W
	\]
	satisfying the Leibniz rule
	\[
	\D(\alpha\wedge \beta) = (\D\alpha)\wedge\beta + (-1)^{|\alpha|k} \alpha\wedge(\D \beta).
	\]
	
	The \textbf{symbol} of a derivation $\D \in \Der^k(W, \onabla)$ is the linear map $\sigma\: \wedge^k W\to \nu(\FF)$ determined by
	\[
	\left\langle \D f, w_1\wedge\dots\wedge w_k\right\rangle = \left \langle \d f, \sigma(\D)(w_1, \dots, w_k)\right\rangle
	\]
	for $f\in C^\infty_\FF\cong \Omega^0_{(W, \onabla)}$ and $w_1, \dots, w_k\in W$. 
\end{definition}
The space of relative derivations $\Der^k(W, \onabla)$ can be realized as the sections of a vector bundle $\DD^k_{(W, \onabla)}\to M$ which itself is actually also a foliated flat bundle: the flat sections of $\DD^k_{(W, \onabla)}$ correspond to those derivations that preserve $\Omega^\bullet_{(W, \onabla)}$. The symbol map induces a short exact sequence of flat foliated bundles
\[
	0 \to (\Hom(\wedge^{k+1}W, W), \onabla) \to (\DD^k_{(W, \onabla)}, \onabla) \xrightarrow{\sigma}  (\Hom(\wedge^kW, \nu(\FF)), \onabla) \to 0.
\]

As before, a derivation relative to a foliation determines and is determined by a \textbf{$k$-bracket relative to $\FF$}, which consists of a partial bracket and anchor
\[
[\cdot, \dots, \cdot]\: \wedge^{k+1} \Gamma_{(W, \onabla)}\to \Gamma_W, \quad \rho\: \wedge^kW\to \nu(\FF),
\]
satisfying the Leibniz rule:
\[
[fw_0, \dots, w_k] = f[w_0, \dots, w_k] + (-1)^k \LL_{\rho(w_1, \dots, w_k)}(f)v_0.
\]
Note that differentiating $f\in C^\infty_\FF$ in the direction of $\nu(\FF)$ makes sense because $f$ is basic for the foliation $\FF$.

An ordinary derivation $\D\in \Der^k(W)$ can be restricted to $\Omega^\bullet_{(W, \onabla)}$ to yield a derivation relative to the foliation. This process induces a bundle map $\Pi\: \DD^k_W\to \DD^k_{(W, \onabla)}$ that fits into the following short exact sequence
\[
0 \to \Hom(\wedge^k W, \FF)\to \DD^k_W \xrightarrow{\Pi} \DD^k_{(W, \onabla)} \to 0.
\]

\subsection{Derivations as maps on first jets}\label{subsec:DerivationsAsMapsOnFirstJets}
If $W\to M$ is a vector bundle, a derivation $\D \in \Der^k(W)$ factors (pointwise) through the first jet, and is therefore determined by a bundle map:
\[
\D\: J^1W^*\to \wedge^{1+k} W^*.
\]  
The investigation of its properties with respect to the structure of $J^1W^*$ is interesting but omitted from this paper. Likewise, a pointwise derivation $\D_x\in \DD^k_W$ is now a linear map 
\[
\D_x\: (J^1W^*)_x\to \wedge^{\bullet+k}W_x^*.
\]

If $(W, \onabla)\to (M, \FF)$ is a flat foliated vector bundle, then the first order PDE defining flat sections can be regarded as a subspace
\[
J^1_\onabla W^*\subset J^1W^*.
\]
A derivation relative to $\FF$ induces a bundle map
\[
\D\: J^1_{\onabla} W^*\to \wedge^{k+1}W^*. 
\]
and a pointwise derivation $\D_x\in (\DD^k_{(W, \onabla)})_x$ induces a linear map
\[
\D_x\: (J^1_{\onabla}W^*)_x\to \wedge^{k+1}W_x^*.
\]
This perspective is useful when dealing with actions of jets of bundle maps on bundles of derivations. 

If $(\Phi, \varphi)\:W\to W$ is a bundle map (which is a local isomorphism), then its first jet acts on a pointwise derivation $\D_x\in \DD^1_W$ as
\[
j^1_x\Phi\cdot \D_x = \Phi_*\circ \D_x \circ j^1_{\varphi(x)}\Phi^*\: (J^1W^*)_{\varphi(x)}\to \wedge^{k+1} W_{\varphi(x)}^*.
\]
where $j^1_{\varphi(x)}\Phi^*\: (J^1W^*)_{\varphi(x)}\to (J^1W^*)_x$ is defined as
\[
j^1_{\varphi(x)}\Phi^*\cdot j^1_{\varphi(x)}\alpha = j^1_x(\Phi^*\alpha).
\]
If, at $x\in M$, the action of $j^1_{\varphi(x)}\Phi^*$ preserves $J^1_\onabla W^*$, then the jet also acts on a pointwise derivation $\D_x\in \DD^1_{(W, \onabla)}$ by the same formula, but $\Phi$ itself does not have to be a map of flat foliated bundles away from $x$. This is the true advantage of working with jets over local sections.

Recall that derivations $\D\:\Omega^\bullet_{(W, \onabla)}\to \Omega^{\bullet+k}$ correspond to relative $k$-brackets $[\cdot, \dots, \cdot]\: \wedge^{k+1}\Gamma_{(W, \onabla)} \to \Gamma_W$. The bracket also depends only on first jets, so a pointwise derivation $\D_x\in (\DD^1_{(W, \onabla)})_x$ corresponds to a pointwise bracket
\[
[\cdot, \dots, \cdot]_x\: \wedge^{k+1}(J^1_{\onabla}W)_x \to W_x
\]  
If $j^1_{x}\Phi$ preserves $J^1_{\onabla}W$, then it acts on the partial bracket as 
\[
j^1_{x}\Phi\cdot[\cdot, \dots, \cdot]_x =\Phi_x\left( \left[(j^1_x\Phi)^{-1}(\cdot),\dots, (j^1_x\Phi)^{-1}(\cdot)\right]_x \right)\: \wedge^{k+1}(J^1_{\onabla}W)_{\varphi(x)} \to W_{\varphi(x)}. 
\]

\subsection{Relative algebroids}

A relative algebroid is nothing more than a flat foliated vector bundle together with a relative 1-derivation, and we now have the language to understand all of its ingredients.

\begin{definition}
	A relative (almost) algebroid consists of a flat foliated vector bundle $(B, \onabla)\to (M, \FF)$ together with a derivation $\D\in \Gamma(\DD^1_{(B, \onabla)})$ relative to $\FF$. We denote the structure by $(B, \onabla, \D)$. 
\end{definition}

\begin{remark}
	Since there are no conditions (yet) on the derivation of a relative algebroid, it is what should be called an almost relative algebroid, but we decided that this name is too long. Later, when we impose the proper integrability conditions on the derivation (i.e. formal integrability), we call the resulting objective a relative \emph{Lie} algebroid, where ``Lie" signifies that the formal integrability conditions are satisfied.
\end{remark}

\begin{remark}
	In practice, the foliation often arises as the fibers of a submersion $p\: M\to N$, and the flat foliated bundle $B= p^*A$ for some vector bundle $A\to N$, with the canonical flat $\ker Tp$-connection. In this case, the algebroid $(p^*A, \onabla, \D)$ relative to $\ker Tp$ is called an \textbf{algebroid relative to a submersion}, and we denote the structure by $(A, p, \D)$, where $A\to N$ is the vector bundle, $p\: M\to N$ the submersion and $\D\in \Gamma(p^*\DD^1_A)$ the relative derivation. 
\end{remark}

\begin{example}
	A derivation relative to the foliation $\FF=0$ is just an ordinary derivation. This way, any (almost) Lie algebroid, i.e. a vector bundle $A\to M$ with derivation $\D\in \Der^1(A)$ such that $\D^2=0$, can be regarded as a relative algebroid. In particular, when $R$ is a manifold, $(TR, \d)$ is a relative algebroid. 
\end{example}

Next, we introduce the concept of ``solutions" in the context of relative algebroids. 

\begin{definition}
	A \textbf{realization} $(R, \theta, r)$ of a relative algebroid $(B, \onabla, \D)$ over $(M, \FF)$ is consists of a manifold $R$ and a bundle map $(\theta, r)\: TR\to B$ (covering $r\: R\to M$) that is fiberwise an isomorphism and satisfies $\d \circ \theta^* = \theta^* \circ \D$ on $\Omega^\bullet_{(B, \onabla)}$.
\end{definition}

A realization is a particular example of a morphism of relative algebroids (Definition \ref{def:MorphismOfRelativeAlgebroids}).

\begin{definition}
	The \textbf{tableau map} of a relative algebroid $(B, \onabla, \D)$ is the map
	\[
		\tau\: \FF\to \DD^1_{(B, \onabla)}, \quad \tau(X) = \onabla_X \D.
	\]
	Here, $\onabla$ is the canonical flat $\FF$-connection on $\DD^1_{(B, \onabla)}$.
\end{definition}
The tableau map of a relative algebroid is a \textbf{tableau of derivations} \cite[Section 1]{FernandesSmilde2025}. The symbol of the tableau map defines a classical tableau map. In terms of the anchor $\rho\: B\to \nu(\FF)$, it is given by
\begin{equation}\label{eq:SymbolTableauMap}
	\sigma(\tau)(X)(b) = (\onabla_X \rho)(b) = \onabla_X (\rho(b)) - \rho(\onabla_X b).
\end{equation}

Next, we discuss two fundamental constructions of relative algebroids.

\begin{example}[Universal/tautological relative algebroid]\label{ex:Universal/tautologicalRelativeAlgebroid}
	Let $A\to N$ be a vector bundle. Recall from Example \ref{ex:PointWiseDerivations} that elements in the derivation bundle $\D_x\in \DD^1_A$ can be regarded derivations relative to the inclusion:
	\[
	\D_x\: \Omega^\bullet(A)\to \wedge^{\bullet+1}A_x^*.
	\]
	Let $p_1\: \DD^1_A\to N$ be the vector bundle projection. A derivation on $A$ relative to $p_1$ is a section of the bundle $p_1^*\DD^1_A\to \DD^1_A$. This bundle has a canonical section $\UD\in \Gamma(p_1^*\DD^1_A)$. Concretely, as a derivation, it is given by
	\[
	(\UD \alpha)\big\vert_{\D_x} = \D_x \alpha \in \wedge^{|\alpha|+1} A_x^*
	\]
	for $\alpha\in \Omega^\bullet(A)$. The relative algebroid $(A, p_1, \UD)$ is called the \textbf{universal relative algebroid} of the vector bundle $A\to N$.
	
	Geometrically, the realizations are manifolds $R$ equipped with an \emph{$A$-coframe}, which is a bundle map $(\theta, r)\: TR\to A$ that is fiberwise an isomorphism. In a way, the universal algebroid represents the classification problem for $A$-coframes without any restriction.
	
	If $A=V\to \{*\}$ is a vector space, we can identify $\DD^1_V \cong \Hom(\wedge^2V, V)$ and the bracket dual to $\UD$ is the tautological relative bracket:
	\[
		[\cdot, \cdot]^{\Breve{}}\: \wedge^2V\to C^\infty(\Hom(\wedge^2V, V), V), \quad [v, w](c) = c(v, w).
	\]
\end{example}

\begin{example}[Restriction]\label{ex:restriction}
	Often, in classification problems for geometric structures can be rephrased in terms of coframes with additional restrictions. In this example we discuss how relative algebroids arises from others by adding constraints. 
	
	Let $(B, \onabla, \D)$ be a relative algebroid over $(M, \FF)$. A map $i\: Q\to M$ is \textbf{invariant} for $(B, \onabla, \D)$ if it contains the image of the symbol (i.e. anchor). More precisely, this means that
	\[
	\im \sigma(\D_{i(x)})\subset \im \Pi\circ T_x i, \quad \mbox{for all $x\in Q$}, 
	\]
	where $\Pi\: TM\to \nu(\FF)$ is the projection onto the normal bundle.
	
	Relative algebroids can be pulled back along invariant maps under relatively mild conditions.
	
	\begin{proposition}[\cite{FernandesSmilde2025}, Proposition 4.12]\label{prop:restriction} 
		Suppose that the map $i\: Q\to M$ satisfies
		\begin{enumerate}
			\item $i$ is invariant for $(B, \onabla, \D)$,
			\item $(T_xi)^{-1}(\FF_{i(x)})$ has constant rank for all $x\in Q$.
		\end{enumerate}
		Then there is a unique structure of a relative algebroid $(i^*B, i^*\onabla, \D_Q)$ over $(Q, i^!\FF)$ for which the canonical map $i^*B\to B$ is a morphism of relative algebroids.
	\end{proposition}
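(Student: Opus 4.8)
The plan is to build the triple $(i^*B, i^*\onabla, \D_Q)$ in three stages — the pullback foliation on $Q$, the pullback flat connection on $i^*B$, and the derivation $\D_Q$ — and to notice that the requirement that the tautological bundle map $\iota\colon i^*B\to B$ (covering $i$) be a morphism of relative algebroids leaves no freedom, so that uniqueness is automatic and only existence needs work. First I would set $i^!\FF := (Ti)^{-1}(\FF)\subseteq TQ$; by hypothesis (2) this is a subbundle of constant rank, and its involutivity is a local matter: writing $\FF$ near a point as the fibres of a submersion $p\colon M\to N$, hypothesis (2) says exactly that $p\circ i\colon Q\to N$ has constant rank, hence factors locally as a submersion $q\colon Q\to Q'$ followed by an immersion $j\colon Q'\hookrightarrow N$, and $i^!\FF = \ker T(p\circ i) = \ker Tq$ is the fibre foliation of $q$. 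Since $Ti(i^!\FF)\subseteq\FF$, the map $i\colon(Q,i^!\FF)\to(M,\FF)$ is a map of foliations, so the pullback partial connection $i^*\onabla$ on $i^*B$ along $i^!\FF$ is defined and flat, and (by Lemma \ref{lem:PerservingFlatSectionsImpliesMorphismOfFlatFoliatedBundles}, or directly) $\iota$ is a fibrewise-isomorphic morphism of flat foliated bundles; consequently $\iota^*$ embeds $\Omega^\bullet_{(B,\onabla)}$ into $\Omega^\bullet_{(i^*B,i^*\onabla)}$ with image generating the target locally over $C^\infty_{i^!\FF}$, because flat sections of $i^*B$ are locally pullbacks of flat sections of $B$.

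This already yields uniqueness and exhibits the two hypotheses as the sharp ones. A derivation $\D_Q$ with $\D_Q\circ\iota^* = \iota^*\circ\D$ is determined on $\iota^*\Omega^\bullet_{(B,\onabla)}$, hence — through its Leibniz rule over $C^\infty_{i^!\FF}$ and its anchor — everywhere; and the anchor itself is forced. Indeed, the anchor compatibility for $\iota$ reads $\overline{Ti}\circ\rho_{\D_Q} = i^*\rho_\D$, where $\overline{Ti}\colon\nu(i^!\FF)\to i^*\nu(\FF)$ is induced by $Ti$ and is injective precisely because $i^!\FF = (Ti)^{-1}\FF$; so $\rho_{\D_Q}$ exists and is unique as soon as $\im(\rho_{\D,i(x)})\subseteq\im(\overline{T_xi})$ for all $x$. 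Since $\im\rho_{\D,i(x)} = \im\sigma(\D_{i(x)})$ and $\im\overline{T_xi} = \im(\Pi\circ T_xi)$ with $\Pi\colon TM\to\nu(\FF)$ the projection, this inclusion is exactly the invariance hypothesis (1).

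For existence I would work in the local model above: $B = p^*A$ for some $A\to N$, so $\D$ corresponds to a section $\hat\D\colon M\to\DD^1_A$ over $p$ in the sense of the universal relative algebroid (Example \ref{ex:Universal/tautologicalRelativeAlgebroid}), and $i^*B = q^*A'$ with $A':=j^*A\to Q'$; the goal is a section $\hat\D_Q\colon Q\to\DD^1_{A'}$ over $q$. The crucial input is a restriction-of-derivations lemma along the immersion $j$: the pullback-of-$1$-jets map $j^1_{j(x')}j^*\colon (J^1A^*)_{j(x')}\to (J^1(A')^*)_{x'}$, $j^1_{j(x')}\alpha\mapsto j^1_{x'}(j^*\alpha)$, is surjective with kernel the $1$-jets of forms whose value and whose derivative tangent to $Q'$ both vanish at $x'$; any $\D_y\in(\DD^1_A)_y$ with $\im\sigma(\D_y)\subseteq\im T_{x'}j = T_{x'}Q'$ annihilates that kernel, since its Leibniz rule makes $\D_y\alpha$ depend only on $\alpha(y)$ and on derivatives of coefficients along $\im\sigma(\D_y)$, so $\D_y$ descends to an element of $(\DD^1_{A'})_{x'}$. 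Hypothesis (1) says precisely that the symbol of $\hat\D(i(z))$ is tangent to $Q'$ for every $z\in Q$, so $\hat\D\circ i$ descends pointwise to a map $\hat\D_Q\colon Q\to\DD^1_{A'}$ over $q$, smooth because the descent is induced by a smooth surjective bundle morphism over $Q'$. By construction $\D_Q\circ\iota^* = \iota^*\circ\D$ and the anchors agree, so $\iota$ is a morphism of relative algebroids; it then remains to check that $\hat\D_Q$ is independent of the auxiliary choices of $p$ and of the factorization $(q,j)$ — which follows from the naturality of the restriction lemma — and to glue the local pieces, which is legitimate by the uniqueness above.

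The step I expect to be the real obstacle is the restriction-of-derivations lemma — correctly identifying the kernel of $j^1j^*$ and verifying that symbol-tangency to $Q'$ is exactly the condition making a pointwise derivation kill it, which is where hypothesis (1) does its work — together with the attendant smoothness and choice-independence bookkeeping; everything else is formal once this local building block is in place.
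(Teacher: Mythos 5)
Your argument is correct. Note that this paper does not actually prove the statement --- it is imported verbatim from \cite{FernandesSmilde2025} (Proposition 4.12) and used as a black box --- so there is no in-text proof to compare against; judged on its own terms, your route is the natural one for this restriction result. The two pillars are sound: uniqueness follows because, in the constant-rank local model $p\circ i=j\circ q$, every germ of a flat form on $(i^*B,i^*\onabla)$ is $\iota^*$ of a germ of a flat form on $(B,\onabla)$ (local extension along the immersion $j$), and the anchor is forced with invariance being exactly the condition $\im\rho_{\D,i(x)}\subseteq\im(\Pi\circ T_xi)$ needed for it to exist; existence rests on your descent lemma for pointwise derivations along $j$, and your identification of the kernel of $j^1j^*$ together with the frame/Leibniz computation showing that symbol-tangency kills it is the right argument. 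The remaining items you leave implicit are genuinely routine: that the descended pointwise operator again satisfies the Leibniz rule (with symbol valued in $TQ'$), that the symbol-tangent locus is a subbundle of $j^*\DD^1_A$ so the descent is a smooth bundle map (giving smoothness of $\hat\D_Q$), and that independence of the choices of $p$ and of the factorization $(q,j)$, as well as gluing, follow from the germwise uniqueness rather than from any separate naturality statement.
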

\end{example}

\subsection{Prolongation and formal integrability}
For this section, let us fix a relative algebroid $(B, \onabla, \D)$ over $(M, \FF)$. A prolongation of $B$ is an attempt to complete the derivation $\D$ in such a way that ``$\D^2=0$", but, instead of considering one specific completion, we consider all of them at once. 

Recall from Section \ref{sec:DerivationsRelativeToFoliations} that there is a map $\Pi\: \DD^1_B\to \DD^1_{(B, \onabla)}$ obtained by restricting derivations to flat forms. The \textbf{space of pointwise extensions} (or the \textbf{partial prolongation space}) of $(B, \onabla, \D)$ is the space
\[
J^1_\D M := \Pi^{-1}(\im \D) = \left\{ \tilde{\D}_x\in \DD^1_{B}\ | \ \Pi(\tilde{\D}_x) = \D_x\in \DD^1_{(B, \onabla)} \right\}.
\]
It is an affine bundle over $M$ modeled on $\Hom(B, \FF)$. 

Since $p_1\: J^1_\D M\to M$ is a surjective submersion, the inclusion $i\: J^1_\D M\hookrightarrow \DD^1_{B}$ satisfies the conditions of Proposition \ref{prop:restriction} and thus the universal relative algebroid $(B, p_1, \UD))$ of $B\to M$ from Example \ref{ex:Universal/tautologicalRelativeAlgebroid} can be restricted to $J^1_\D M$ to obtain a relative algebroid that we call the \textbf{partial prolongation} of $(B, \onabla, \D)$.

However, to complete $(B, \onabla, \D)$ to an object that is closer to satisfying ``$\D^2 =0$" we have to take that condition into account. The \textbf{space of pointwise completions} (or the \textbf{prolongation space}) of $(B, \onabla, \D)$ is
\[
M^{(1)} = \{ \tilde{\D}_x\in \DD^1_B \ 
| \ \Pi(\tilde{\D}_x) = \D_x, \ \tilde{\D}_x\circ \D =0 \}
\]
The space $M^{(1)}$ is not guaranteed to be smooth, nor does $p_1\: M^{(1)}\to M$ have to be surjective. In fact, the image $p_1(M^{(1)})\subseteq M$ is precisely the locus where the intrinsic torsion of $(B, \onabla, \D)$ vanishes. We don't discuss torsion and curvature in this paper and refer to \cite{FernandesSmilde2025} for more details.

When $M^{(1)}$ is smooth and $p_1\: M^{(1)}\to M$ is surjective, we call $(B, \onabla, \D)$ \textbf{1-integrable}. In that case, the inclusion $M^{(1)}\hookrightarrow \DD^1_B$ satisfies the conditions of Proposition \ref{prop:restriction} and thus the universal relative algebroid restricts to $M^{(1)}$ (Proposition \ref{prop:restriction}). The resulting relative algebroid, denoted $(B, p_1, \D^{(1)})$ is called the \textbf{first prolongation} of $(B, \onabla, \D)$. 

Higher prolongations are defined iteratively, with
\[
(B, \onabla, \D)^{(k)} :=(B^{(k-1)}, p_k, \D^{(k)}) = (B^{(k-2)}, p_{k-1}, \D^{(k-1)} )^{(1)}. 
\]
If the first $k$ prolongations exist, we call $(B, \onabla, \D)$ \textbf{$k$-integrable}, and $(B, \onabla, \D)$ is \textbf{formally integrable} if it is $k$-integrable for all $k$. In that case, it gives rise to what should be a profinite Lie algebroid (of finite type):
\[
\xymatrix{
	{\left(B^{(\infty)}, \D^{(\infty)}\right)\:\ldots} \ar[r] \ar@<-12pt>[d] & B^{(k)} \ar[d] \ar[r] & B^{(k-1)} \ar[r] \ar[d] \ar@{-->}@/^1pc/[l]^{\D^{(k)}} & \ldots \ar[r] & B^{(1)} \ar[r] \ar[d] & B \ar[d] \ar@{-->}@/^1pc/[l]^{\D^{(1)}}\\
	M^{(\infty)}\:\ldots \ar[r]                                            & M^{(k)} \ar[r]_{p_k}    & M^{(k-1)} \ar[r]                                                               & \ldots \ar[r] & M^{(1)} \ar[r]_{p_1}    & M}               
\]
If $(B, \onabla, \D)$ is formally integrable, we say that it is a \textbf{relative Lie algebroid}, placing the ``Lie" back into the terminology.

\subsection{Morphisms of relative algebroids}

Recall from Section \ref{sec:FlatFoliatedVectorBundles} that a morphism of flat foliated vector bundles $(\Phi, \varphi)\: (B_1, \onabla^1)\to (B_2, \onabla^2)$ induces a map
\[
\Phi^*\: \Omega^\bullet_{(B_2, \onabla^2)} \to \Omega^\bullet_{(B_1, \onabla^1)}.
\]

\begin{definition}\label{def:MorphismOfRelativeAlgebroids}
	A \textbf{morphism of relative algebroids} $(\Phi, \varphi)\: (B_1, \onabla^1, \D_1) \to (B_2, \onabla^2, \D_2)$ is a map of flat foliated vector bundles such that
	\begin{equation}\label{eq:MorphismOfRelativeAlgebroids}
	\Phi^*\circ \D_2 = \D_1\circ \Phi^*, \quad \mbox{on $\Omega^\bullet_{(B_2, \onabla^2)}$.}
	\end{equation}
\end{definition}
If the morphism $\Phi$ is fiberwise an isomorphism, then Equation (\ref{eq:MorphismOfRelativeAlgebroids}) can be expressed in terms of brackets as
\[
\Phi^*[b_1, b_2]_{\D_2} = [\Phi^*b_1, \Phi^*b_2]_{\D_1},
\]
for all $b_1, b_2\in \Gamma_{(B_2, \onabla^2)}$.

This notion of morphism pairs well with the constructions built out of a relative algebroid.
\begin{proposition}[\cite{FernandesSmilde2025}, Proposition 3.21]\label{prop:NaturalityOfProlongationsOfRelativealgebroids}
	Let $(\Phi, \varphi)\: (B_1, \onabla^1, \D_1)\to (B_2, \onabla^2, \D_2)$ be a morphism of relative algebroids, such that $\Phi$ is fiberwise an isomorphism. 
	\begin{enumerate}
		\item If $(R, \theta, r)$ is a realization of $(B_1, \onabla^1, \D_1)$, then $(R, \Phi\circ \theta, \varphi\circ r)$ is a realization of $(B_2, \onabla^2, \D_2)$.
		\item If $(B_1, \onabla^1, \D_1)$ and $(B_2, \onabla^2, \D_2)$ are $k$-integrable, then $\Phi$ induces a morphism of relative algebroids
		\[
		(\Phi^{(k)}, \varphi^{(k)})\: (B_1^{(k-1)}, (p_1)_k, \D^{(k)}_1)\to (B_2^{(k-1)}, (p_2)_k, \D^{(k)}_2) 
		\]
		that is fiberwise an isomorphism.
		\item If $(B_1, \onabla^1, \D_1)$ and $(B_2, \onabla^2, \D_2)$ are formally integrable, then $\Phi$ induces a map of profinite Lie algebroids
		\[
		(\Phi^{(\infty)}, \varphi^{(\infty)})\: (B_1^{(\infty)}, \D_1^{(\infty)})\to (B_2^{(\infty)}, \D_2^{(\infty)})
		\]
		that is fiberwise an isomorphism.
	\end{enumerate}
\end{proposition}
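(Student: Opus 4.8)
The plan is to treat the three items in turn, deriving everything from a single conjugation operation on the bundles of pointwise derivations.

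For item~(1) I would chase definitions. Because $(\Phi,\varphi)$ is a morphism of flat foliated bundles, $\Phi^*$ carries $\Omega^\bullet_{(B_2,\onabla^2)}$ into $\Omega^\bullet_{(B_1,\onabla^1)}$, and $(\Phi\circ\theta)^*=\theta^*\circ\Phi^*$ is fiberwise an isomorphism covering $\varphi\circ r$. On $\Omega^\bullet_{(B_2,\onabla^2)}$ one then has $\d\circ(\Phi\circ\theta)^*=\d\circ\theta^*\circ\Phi^*$; this equals $\theta^*\circ\D_1\circ\Phi^*$ by the realization property of $\theta$ (applied legitimately, since $\Phi^*\alpha$ is flat), and then $\theta^*\circ\Phi^*\circ\D_2=(\Phi\circ\theta)^*\circ\D_2$ by the morphism property of $\Phi$. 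Together with fiberwise invertibility, this is precisely the statement that $(R,\Phi\circ\theta,\varphi\circ r)$ is a realization of $(B_2,\onabla^2,\D_2)$.

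For item~(2), the central object is the conjugation map $C\colon\DD^1_{B_1}\to\DD^1_{B_2}$, $C(\tilde{\D}_x)=\Phi_x\circ\tilde{\D}_x\circ j^1_{\varphi(x)}\Phi^*$, a smooth bundle map covering $\varphi$ (the jet pullback $j^1_{\varphi(x)}\Phi^*\colon(J^1B_2^*)_{\varphi(x)}\to(J^1B_1^*)_x$ is defined because $\Phi$ is a fiberwise isomorphism). First I would check that $(C,\Phi)$ is a fiberwise-isomorphism morphism of the universal relative algebroids of $B_1$ and $B_2$ (Example~\ref{ex:Universal/tautologicalRelativeAlgebroid}): it preserves flat sections, so it is a morphism of flat foliated bundles by Lemma~\ref{lem:PerservingFlatSectionsImpliesMorphismOfFlatFoliatedBundles}, and it intertwines the two universal derivations because these are tautological. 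The crucial step is to show $C$ maps the prolongation locus $M_1^{(1)}\subset\DD^1_{B_1}$ into $M_2^{(1)}\subset\DD^1_{B_2}$. Given $\tilde{\D}_x\in M_1^{(1)}$: because $j^1_{\varphi(x)}\Phi^*$ preserves flat jets, the restriction of $C(\tilde{\D}_x)$ to flat jets is $\Phi_x\circ(\D_1)_x\circ j^1_{\varphi(x)}\Phi^*$, which equals $(\D_2)_{\varphi(x)}$ by the pointwise form of $\Phi^*\circ\D_2=\D_1\circ\Phi^*$; and for any flat form $\alpha$ on $B_2$, $C(\tilde{\D}_x)(\D_2\alpha)=\Phi_x\bigl(\tilde{\D}_x(\Phi^*\D_2\alpha)\bigr)=\Phi_x\bigl((\tilde{\D}_x\circ\D_1)(\Phi^*\alpha)\bigr)=0$, since $\Phi^*\alpha$ is flat and $\tilde{\D}_x\circ\D_1=0$. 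Hence $C$ restricts to a smooth map $\varphi^{(1)}\colon M_1^{(1)}\to M_2^{(1)}$ (both smooth by $1$-integrability), and since the first prolongation is by construction the restriction of the universal relative algebroid to $M^{(1)}$ (Example~\ref{ex:restriction}, Proposition~\ref{prop:restriction}), the restricted pair $(\Phi^{(1)},\varphi^{(1)})$ — with $\Phi^{(1)}$ the pullback of $\Phi$ along $\varphi^{(1)}$, hence fiberwise an isomorphism — is a morphism of relative algebroids between the first prolongations.

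The case of general $k$ then follows by induction: when both algebroids are $k$-integrable their $(k-1)$-st prolongations are $1$-integrable, so the $k=1$ case applied to $(\Phi^{(k-1)},\varphi^{(k-1)})$ yields $(\Phi^{(k)},\varphi^{(k)})$ with the stated properties, proving item~(2); item~(3) is then formal, since for formally integrable algebroids the $\varphi^{(k)}$ are compatible with the tower projections (each stage of $C$ covers the previous base map), so the family $(\Phi^{(k)},\varphi^{(k)})$ assembles into a fiberwise-isomorphism map of profinite Lie algebroids intertwining $\D_1^{(\infty)}$ and $\D_2^{(\infty)}$. I expect the only real obstacle to be the well-definedness check in item~(2) — that $C$ preserves the prolongation locus — as it is the single place where the full morphism hypothesis on $\Phi$ (beyond fiberwise invertibility) and the compatibility of $\Phi^*$ and $j^1\Phi^*$ with flatness of forms and jets are genuinely used; the remaining verifications are either routine diagram chases or formal consequences of the restriction and prolongation formalism.
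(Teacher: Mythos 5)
This proposition is quoted from \cite{FernandesSmilde2025} (Proposition 3.21) and the present paper gives no proof of it, so there is nothing internal to compare against; judged on its own, your argument is correct and is exactly the proof the paper's formalism suggests: composition of pullbacks for realizations, and for prolongations the jet-conjugation map $C(\tilde{\D}_x)=\Phi_x\circ\tilde{\D}_x\circ j^1_{\varphi(x)}\Phi^*$ on $\DD^1_{B}$ (the action of Section \ref{subsec:DerivationsAsMapsOnFirstJets}), which preserves the loci $M^{(1)}$ because $\Phi^*$ sends flat forms to flat forms and intertwines $\D_2$ with $\D_1$, followed by restriction of the universal relative algebroid (Example \ref{ex:Universal/tautologicalRelativeAlgebroid}, Proposition \ref{prop:restriction}) and induction on $k$. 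The only remarks are cosmetic: the fiberwise isomorphism is needed for the pushforward $\Phi_x$ on values rather than for $j^1_{\varphi(x)}\Phi^*$, and the routine verifications that $C(\tilde{\D}_x)$ obeys the pointwise Leibniz rule and that restricting the morphism of universal algebroids to $M_1^{(1)}\to M_2^{(1)}$ yields a morphism of the restricted structures are left implicit, but both are immediate.
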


\subsection{Representations on flat foliated vector bundles}\label{sec:RepresentationOnFlatFoliatedBundles}

The flat sections of a foliated bundle $(W, \onabla)$ are the solutions to the PDE $\onabla w=0$. Since $\onabla w$ only depends on the first jet of $w$, the PDE is determined by a subspace of $J^1W$ denoted by
\[
J^1_\onabla W = \{ j^1_xw\in J^1W \ | \ (\onabla w)_x = 0\}\subseteq J^1W.
\]
The torsion of this PDE can be identified with the curvature of $\onabla$, and when it vanishes, it is PDE-integrable, meaning that there exists flat sections trough every point $w\in W$. 

If $\GG\toto M$ is a groupoid with a representation on a vector bundle $W\to M$, then $J^1\GG\toto M$ has a representation on $J^1W\to M$. In general, it is too much to ask that $J^1\GG$ preserves the PDE for parallel section $J^1_{\onabla}W$. Therefore, we propose the following definition.

\begin{definition}
	A \textbf{prolongation pair} $(\GG_1, \GG)\toto M$ of groupoids consists of a groupoid $\GG\toto M$ and a subgroupoid $\GG_1\subseteq J^1\GG$. 
\end{definition}

\begin{definition}\label{def:RepresentationOnFlatFoliatedVectorBundle}
	A \textbf{representation} a prolongation pair $(\GG_1, \GG)\toto M$ \textbf{on a flat foliated vector bundle} $(W, \onabla)\to (M, \FF)$ is a representation $\GG\curvearrowright W$ for which $\GG_1$ preserves $J^1_\onabla W$. 
\end{definition}

\begin{remark}
	We don't necessarily require the map $\GG_1\to \GG$ to be surjective. This is to make the terminology around the groupoid of symmetries of a Pfaffian fibration more smooth. 
\end{remark}

Restricting the Cartan distribution $\HH^1$ on $J^1\GG$ to $\GG_1$ (Example \ref{ex:JetGroupoids}), we obtain a Pfaffian groupoid $(\GG_1, \HH^1)\to M$ that acts on $J^1_\onabla W$ by Pfaffian symmetries. Thus, essentially, a prolongation pair is a first order PDE on the bisections of a Lie groupoid, but our notation remembers the underlying groupoid and its representation on $W$.

\begin{proposition}\label{prop:RepresentationsOfFoliatedVectorBundles}
	Let $(\GG_1, \GG)\curvearrowright (W, \onabla)$ be a representation of flat foliated vector bundles.
	\begin{enumerate}
		\item The holonomic bisections of $(\GG_1, \HH^1)$ act by (local) isomorphisms of flat foliated vector bundles.
		\item The groupoid $\GG_1$ has a natural representation on $\DD^k_{(W, \onabla)}$.
	\end{enumerate}
\end{proposition}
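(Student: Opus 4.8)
The plan is to prove (1) directly, reducing it to Lemma \ref{lem:PerservingFlatSectionsImpliesMorphismOfFlatFoliatedBundles}, and then to build the representation in (2) via a conjugation formula expressed through the first-jet description of derivations in Section \ref{subsec:DerivationsAsMapsOnFirstJets}, with (1) supplying the input for its well-definedness. For (1), I would begin by noting, using the characterisation of the Cartan distribution on $J^1\GG$ from Example \ref{ex:JetGroupoids}, that a holonomic bisection of $(\GG_1, \HH^1)$ is the prolongation $j^1\sigma$ of a local bisection $\sigma$ of $\GG$ whose $1$-jet lies in $\GG_1$ at each point of its domain. Let $L_\sigma$ denote the (locally defined) bundle automorphism $W\to W$ covering $\bar\sigma = t\circ\sigma$ induced by $\sigma$ through the representation $\GG\curvearrowright W$; the prolonged representation $J^1\GG\curvearrowright J^1W$ is characterised by $(j^1_x\sigma)\cdot(j^1_xw) = j^1_{\bar\sigma(x)}(L_\sigma w)$ for every local section $w$ of $W$. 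If $w$ is flat, then $j^1_xw\in J^1_\onabla W$ for all $x$, hence $j^1_{\bar\sigma(x)}(L_\sigma w) = (j^1_x\sigma)\cdot(j^1_xw)\in J^1_\onabla W$ because $j^1_x\sigma\in\GG_1$ and $\GG_1$ preserves $J^1_\onabla W$ by hypothesis; since $\bar\sigma$ is a local diffeomorphism, this says precisely that $L_\sigma w$ is flat wherever defined. Thus $L_\sigma$ is a fibrewise isomorphism preserving flat sections, so Lemma \ref{lem:PerservingFlatSectionsImpliesMorphismOfFlatFoliatedBundles} yields that it is an isomorphism of flat foliated vector bundles.

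For (2), dualising (1) shows that the induced automorphism $L_\sigma^*$ of $W^*$ is also an isomorphism of flat foliated bundles, so the $\GG_1$-action on $J^1W^*$ preserves the flat jets $J^1_\onabla W^*$. Following Section \ref{subsec:DerivationsAsMapsOnFirstJets}, I would regard a pointwise derivation $\D_x\in(\DD^k_{(W,\onabla)})_x$ as a Leibniz-type linear map $\D_x\colon (J^1_\onabla W^*)_x\to\wedge^{k+1}W_x^*$, and define, for $g_1 = j^1_x\sigma\in\GG_1$ with $y = \bar\sigma(x)$,
\[
	g_1\cdot\D_x \;:=\; (L_\sigma)_*\circ\D_x\circ j^1_{y}(L_\sigma)^*\colon (J^1_\onabla W^*)_y\longrightarrow\wedge^{k+1}W_y^*,
\]
which makes sense precisely because $j^1_{y}(L_\sigma)^*$ preserves $J^1_\onabla W^*$. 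Then I would verify three points: that $g_1\cdot\D_x$ again satisfies the Leibniz rule and hence lies in $(\DD^k_{(W,\onabla)})_y$, being the conjugate of $\D_x$ by the algebra isomorphism of $\Omega^\bullet_{(W,\onabla)}$ induced by $L_\sigma$; that the right-hand side depends only on $g_1 = j^1_x\sigma$ and not on the germ of $\sigma$, because it is assembled from $(L_\sigma)_x$ and the $1$-jet at $x$ of the bundle map $L_\sigma$, both of which are determined by $j^1_x\sigma$ since $\GG\curvearrowright W$ is a fixed smooth action; and that $u(x)\cdot\D_x=\D_x$ and $(g_1g_1')\cdot\D_x = g_1\cdot(g_1'\cdot\D_x)$, which follow from $L_{\mathrm{id}}=\mathrm{id}$, $L_{\sigma\tau}=L_\sigma\circ L_\tau$ and the functoriality of conjugation. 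Smoothness of the action map $\GG_1\times_M\DD^k_{(W,\onabla)}\to\DD^k_{(W,\onabla)}$ is immediate from the formula, and the argument is uniform in $k$.

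The step I expect to be the main obstacle is the well-definedness in (2): one has to be sure both that the conjugation formula genuinely factors through $J^1\GG$ --- that it sees only $j^1_x\sigma$, never higher jets of $\sigma$ --- and that its output is a derivation \emph{relative to} $\FF$, that is, lands in $\DD^k_{(W,\onabla)}$ and not merely in $\DD^k_W$. Both points are exactly what the first-jet formalism of Section \ref{subsec:DerivationsAsMapsOnFirstJets} is built to control, and the essential geometric input is the hypothesis that $\GG_1$ preserves $J^1_\onabla W$, equivalently --- by (1) --- that it preserves $J^1_\onabla W^*$.
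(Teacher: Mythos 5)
Your proof is correct and follows essentially the same route as the paper: part (1) is exactly the paper's argument (a holonomic bisection is $j^1\sigma$, the translation $L_\sigma$ preserves flat sections by the hypothesis on $\GG_1$, then Lemma \ref{lem:PerservingFlatSectionsImpliesMorphismOfFlatFoliatedBundles} applies), and part (2) is the same pointwise conjugation formula on derivations viewed as maps $(J^1_\onabla W^*)_x\to\wedge^{k+1}W^*_x$ from Section \ref{subsec:DerivationsAsMapsOnFirstJets}, with your extra verifications (jet-dependence, Leibniz rule, functoriality) being details the paper leaves implicit. The only wrinkle is your justification that $\GG_1$ preserves $J^1_\onabla W^*$ ``by dualising (1)'': statement (1) only concerns elements lying on holonomic bisections, which a general $g\in\GG_1$ need not do, so the preservation of flat jets of $W^*$ should instead be deduced pointwise from the hypothesis that $\GG_1$ preserves $J^1_\onabla W$ (an easy pairing argument, which the paper also takes for granted).
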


\begin{proof}[Proof of Proposition \ref{prop:RepresentationsOfFoliatedVectorBundles}]
	A holonomic bisection $\tau\in \Bisloc(\GG_1, \HH^1)$ is a bisection $\tau\in \Bisloc(\GG_1)$ such that $\tau = j^1\sigma$ for some $\sigma\in \Bisloc(\GG)$. Let $L_\sigma\: W\dasharrow W$ be the local bundle isomorphism obtained from left translation by $\sigma$. Then, since $j^1\sigma$ lies in $\GG_1$, $L_\sigma$ preserves flat sections. By Lemma \ref{lem:PerservingFlatSectionsImpliesMorphismOfFlatFoliatedBundles}, it is a local isomorphism of flat foliated bundles.
	
	Recall from Section \ref{subsec:DerivationsAsMapsOnFirstJets} that a (pointwise) derivation $\D_x\in \DD^k_{(W, \onabla)}$ can be seen as a map $\D_x\: (J^1_\onabla W^*)_x\to \wedge^{k+1}W^*_x$. Since the representation of $\GG_1$ on $J^1W^*$ preserves $J^1_\onabla W$, we can define the action of $g\in \GG_1$ on $\D_{s(g)}$ as
	\[
	g\cdot \D_{s(g)} = L_g \circ \D_{s(g)} \circ (L_g)^{-1}\: (J^1_\onabla W^*)_{t(g)} \to \wedge^{k+1} W^*_{t(g)}.
	\] 
	where $L_g$ is left translation by $g$ on the appropriate bundles.
\end{proof}

\subsection{Representations on relative algebroids}\label{sec:RepresentationOnRelativeAlgebroids}
Let $(\GG_1, \GG)\toto M$ be a prolongation pair and $(B, \onabla, \D)$ a relative algebroid over $(M, \FF)$. A \textbf{representation} of $(\GG_1, \GG)$ on $(B, \onabla, \D)$ \textbf{by symmetries of relative algebroids} is a representation on a flat foliated bundle $(\GG_1, \GG)\curvearrowright (B, \onabla)$ for which $\D$ is a $\GG_1$-invariant section of $\DD^1_{(B, \onabla)}$.

\begin{example}
	Let $\GG\toto M$ be a Lie groupoid with Lie algebroid $(A, \D_A)$. Given a (local) bisection $\sigma\in \Bisloc(\GG)$, its conjugation map, given 
	\[
		c_\sigma(g) = \sigma(t(g)) \cdot g \cdot \sigma^{-1}(s(g))
	\]
	is a Lie groupoid morphism and thus differentiates to a Lie algebroid morphism $\operatorname{Ad}_\sigma\: A\to A$. Pointwise, it only depends on the first jet of $\sigma$, meaning there is a well-defined map
	\[
		\operatorname{Ad}_{j^1_x\sigma}\: A_x \to A_{t\circ \sigma(x)}
	\]
	which is the \textbf{adjoint representation} of $J^1\GG$ on $A$. Since every section of $A$ is flat, every prolongation pair $(J^1J^1\GG, J^1\GG)$ is a representation by symmetries of flat foliated bundles, so there is at least a representation $J^1J^1\GG\curvearrowright \DD^1_A$. However, the derivation $\D_A$ is only guaranteed to be invariant for the representation of $J^2\GG \subset J^1J^1\GG$. At one extreme, if $\GG\toto M$ is a bundle of Lie groupoids, then $\D_A$ is invariant for $J^1J^1\GG$, while at the other extreme, if $\GG= M\times M\toto M$, then an element in $J^1J^1\GG$ leaves the de Rham differential invariant on $A=TM$ invariant if and only if it lies in $J^2\GG$. 
	
	The representation $(J^2\GG, J^1\GG)\curvearrowright (A, \D_A)$ is a representation by symmetries of (relative) algebroids.
\end{example}

\begin{example}
	Recall from Example \ref{ex:Universal/tautologicalRelativeAlgebroid} the tautological relative algebroid associated to a vector space $V$. The group $\operatorname{GL}(V)$ acts on $\Hom(\wedge^2V, V)$ by $(g\cdot c)(v, w) = g\cdot c(g^{-1}v, g^{-1}w)$. 
	
	The action groupoid $\operatorname{GL}(V) \ltimes \Hom(\wedge^2V, V)\toto \Hom(\wedge^2V, V)$ has an obvious representation on the trivial bundle $\underline{V} \to \Hom(\wedge^2V, V)$. Taking first jets of constant bisection of the action groupoids, we can regard $\GL(V)\ltimes \Hom(\wedge^2V, V)$ as a subgroupoid of its own first jet.
	
	The prolongation pair $(\GL(V)\ltimes \Hom(\wedge^2V, V), \GL(V)\ltimes \Hom(\wedge^2V, V))$ acts on $(V, \Breve{p}_1, \UD)$ by symmetries of relative algebroids. Indeed, if $[\cdot, \cdot]^{\Breve{}}$ is the bracket, then
	\[
		\left(g \cdot [\cdot, \cdot]^{\Breve{}}\right)(v, w)(c) = [gv, gw]^{\Breve{}}(g\cdot c) = (g\cdot c)( gv, gw) = g(c(v, w)),
	\]
	so the bracket (and thus the dual derivation) is invariant.
\end{example}

\section{Tableau maps and involutivity}\label{app:TableauMapsAndInvolutivity}

In the classical theory of partial differential equations, a \textbf{tableau} is simply a subspace $\mf{g} \subset \Hom(V, W)$ for two (finite-dimensional) vector spaces $V$ and  $W$. In the context of Pfaffian fibrations or relative algebroids, one encounters \textbf{tableau maps}
\[
	\tau\: \mf{g}\to \Hom(V, W)
\]
that are not necessarily injective. These were called \textit{generalized tableaux} in \cite{Salazar2013}, but we stick to the term tableau maps to avoid confusion with other generalizations of tableaux (such as tableaux of derivations in \cite[Section 1]{FernandesSmilde2025}). In this section, we show that several natural extensions of definitions of involutivity for tableau maps coincide.

A tableau map corresponds intuitively to a simple system of first order linear differential equation on pairs $(f, g)$ of functions $f\: V\to W$ and $g\:V\to \mf{g}$ such that  
\[
	\d_v f  = \tau(g(v))
\]
for all $v\in \mf{g}$. In case $\mf{g} = \ker \tau \oplus \im\tau$ is split, then this is equivalent to a pair of functions $(f, h)$ with $f\: V\to W$ and $h\: V\to \ker \tau$ such that $\d f \mod \im \tau = 0$, with no further equations on $h$.

When $\tau\: \mf{g}\to \Hom(V, W)$ is a tableau map, the Spencer differential w.r.t. $\tau$ is 
\[
	\delta_\tau\: \Hom(V, \mf{g})\to \Hom(\wedge^2V, W), \quad \delta_\tau(\xi)(v, w) = \tau(\xi(v))(w) - \tau(\xi(w))(v),
\]
for $v, w\in V$. 

\begin{definition}
	The first prolongation $\tau^{(1)}$ of a tableau map $\tau$ is the subspace
	\[
		\mf{g}^{(1)} = \{ \xi\in \Hom(V, \mf{g})\ | \ \delta_\tau\xi = 0\},
	\]
	which is a tableau in the classical sense.
\end{definition}
A \textbf{flag} for an $n$-dimensional vector space $V$ is a nested sequence of subspaces $\{V_k\}$ such that $\dim V_k = k$. A basis adapted to the flag $\{V_k\}$ is a basis $\{e_k\}_k$ with the property that $\{e_1, \dots e_k\}$ is a basis of $V_k$. 

For a tableau map $\tau\: \mf{g}\to \Hom(V, W)$, we set
\[
	\mf{g}_k = \{ \xi \in \mf{g} \ | \ \tau(\xi)\vert_{V_k} = 0\}.
\]
Note that $\mf{g}_0 = \mf{g}$ and $\mf{g}_n = 0$. 

The Cartan characters of $\tau$ (w.r.t. the flag $(V_k)_k$) are defined to be
\[
	s^{\mf{g}}_k = \dim \mf{g}_{k-1} - \dim \mf{g}_k.
\]
In the proposition below, we set $\mf{h} = \im \tau\subset \Hom(V, W)$. Note that $\mf{h}$ is a tableau in the classical sense.
\begin{proposition}[Cartan's bound and Cartan's test]\label{prop:CartansBoundTableauMaps}
	Let $\tau\: \mf{g}\to \Hom(V, W)$ be a tableau map. Then
	\[
		\dim \mf{g}^{(1)} \leq \sum_{k=1}^n ks^{\mf{g}}_k.
	\]
	For a flag $(V_k)_k$ of $V$ with adapted basis $\{e_k\}_k$, the following are equivalent.
	\begin{enumerate}
		\item $\dim \mf{h}^{(1)} = \sum_k k s^{\mf{h}}_k$.
		\item $\dim \mf{g}^{(1)} = \sum_{k} ks^{\mf{g}}_k$.
		\item The map $\iota_{e_k}\: \left(\mf{h}^{(1)}\right)_{k-1} \to \mf{h}_{k-1}$ is surjective for all $k$. 
		\item The map $\iota_{e_k}\: \left(\mf{g}^{(1)}\right)_{k-1} \to \mf{g}_{k-1}$ is surjective for all $k$.
	\end{enumerate}
\end{proposition}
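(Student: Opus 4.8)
The plan is to reduce everything to the classical case of an honest tableau $\mf{h} \subset \Hom(V, W)$ and then transfer the result to the tableau map $\tau$ via the short exact sequence
\[
    0 \to \ker\tau \to \mf{g} \xrightarrow{\tau} \mf{h} \to 0.
\]
First I would record the key observation that $\ker\tau$ contributes trivially to the prolongation: since $\delta_\tau(\xi)$ depends only on the composite $\tau\circ\xi$, there is an exact sequence
\[
    0 \to \Hom(V, \ker\tau) \to \mf{g}^{(1)} \xrightarrow{\tau_*} \mf{h}^{(1)} \to 0,
\]
where $\tau_*(\xi) = \tau\circ\xi$, and this map is surjective because any $\eta\in\mf{h}^{(1)}$ lifts pointwise (choosing a linear splitting $\mf{h}\to\mf{g}$) to a $\xi$ with $\tau\circ\xi = \eta$, which then automatically lies in $\mf{g}^{(1)}$. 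Similarly $\mf{g}_k = \tau^{-1}(\mf{h}_k)$ for every $k$ (directly from the definitions, since $\tau(\xi)|_{V_k}=0 \iff \tau(\xi)\in\mf{h}_k$... careful: one needs $\mf{h}_k := \{A\in\mf{h} : A|_{V_k}=0\}$, which is exactly the flag subspace of the classical tableau $\mf{h}$). Consequently $\dim\mf{g}_{k-1} - \dim\mf{g}_k = \dim\mf{h}_{k-1} - \dim\mf{h}_k$, i.e. $s^{\mf{g}}_k = s^{\mf{h}}_k$ for all $k$, and $\dim\mf{g}^{(1)} = \dim(\ker\tau)\cdot n + \dim\mf{h}^{(1)}$.

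With these identities in hand, Cartan's bound for $\tau$ follows immediately from Cartan's bound for the classical tableau $\mf{h}$ — which I would cite from the standard references on exterior differential systems (Bryant–Chern–Gardner–Goldschmidt–Griffiths, or Ivey–Landsberg), or prove quickly by the usual argument filtering $\mf{g}^{(1)}$ by $(\mf{g}^{(1)})_k = \{\xi : \tau(\xi(e_i))=0 \text{ for } i\le k\}$ — using $\dim\mf{h}^{(1)} \le \sum_k k\, s^{\mf{h}}_k$ together with the displayed dimension formula:
\[
    \dim\mf{g}^{(1)} = n\dim(\ker\tau) + \dim\mf{h}^{(1)} \le n\dim(\ker\tau) + \sum_k k\, s^{\mf{h}}_k = \sum_k k\, s^{\mf{g}}_k,
\]
where the last equality uses $s^{\mf{g}}_k = s^{\mf{h}}_k$ and $\sum_k k(\dim\mf{g}_{k-1}-\dim\mf{g}_k) = \sum_k \dim\mf{g}_k = n\dim\ker\tau + \sum_k\dim\mf{h}_k$ after telescoping (noting $\dim\mf{g}_k = \dim\ker\tau + \dim\mf{h}_k$). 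The equivalence (1) $\iff$ (3) is exactly Cartan's test for the classical tableau $\mf{h}$, again cited or proved by the standard filtration argument. The equivalence (1) $\iff$ (2) is immediate from the dimension formula $\dim\mf{g}^{(1)} = n\dim\ker\tau + \dim\mf{h}^{(1)}$ combined with $\sum_k k s^{\mf{g}}_k = n\dim\ker\tau + \sum_k k s^{\mf{h}}_k$: equality holds for $\mf{g}$ iff it holds for $\mf{h}$.

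It remains to prove (3) $\iff$ (4), i.e. that surjectivity of the interior-product maps on the prolonged flag subspaces is equivalent for $\mf{g}$ and $\mf{h}$. Here I would use the short exact sequence $0\to\Hom(V,\ker\tau)\to\mf{g}^{(1)}\to\mf{h}^{(1)}\to 0$ restricted to the flag filtrations: one checks $(\mf{g}^{(1)})_{k-1} = \tau_*^{-1}\big((\mf{h}^{(1)})_{k-1}\big)$ and that $\iota_{e_k}$ commutes with $\tau_*$ on one side and with the inclusion $\ker\tau\hookrightarrow\mf{g}$ on the other, giving a commutative diagram with exact rows
\[
\begin{tikzcd}[column sep=1.5em]
    0 \arrow[r] & \Hom(V/V_{k-1}, \ker\tau) \arrow[r]\arrow[d, "\iota_{e_k}"] & (\mf{g}^{(1)})_{k-1} \arrow[r]\arrow[d, "\iota_{e_k}"] & (\mf{h}^{(1)})_{k-1} \arrow[r]\arrow[d, "\iota_{e_k}"] & 0 \\
    0 \arrow[r] & \ker\tau \arrow[r] & \mf{g}_{k-1} \arrow[r, "\tau"] & \mf{h}_{k-1} \arrow[r] & 0
\end{tikzcd}
\]
(using $(\mf{g}^{(1)})_{k-1} = \{\xi\in\mf{g}^{(1)} : \xi(e_i) \in\mf{g}_i \text{ appropriately}\}$, with the kernel of $\xi\mapsto(\iota_{e_1}\xi,\dots)$ being sections vanishing on $V_{k-1}$, which for $\xi$ valued in $\ker\tau$ is all of $\Hom(V/V_{k-1},\ker\tau)$). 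The leftmost vertical map $\iota_{e_k}\colon \Hom(V/V_{k-1},\ker\tau)\to\ker\tau$ is visibly surjective since $e_k\notin V_{k-1}$ represents a nonzero class in $V/V_{k-1}$. A diagram chase (the five lemma / snake lemma applied to surjectivity) then shows the middle $\iota_{e_k}$ is surjective iff the right one is. The main obstacle I anticipate is bookkeeping: pinning down the precise definitions of the flag subspaces $(\mf{g}^{(1)})_{k-1}$ and $(\mf{h}^{(1)})_{k-1}$ so that the rows of the diagram are genuinely exact (in particular that the kernels of $\iota_{e_k}$ on prolongation spaces match the "constant in the $e_k$-direction" sections), and being careful that the definition of $\mf{g}_k$ in the paper (via $\tau(\xi)|_{V_k}=0$) matches the classical $\mf{h}_k$ under $\tau$ — everything else is either a citation to classical Cartan theory or a routine diagram chase.
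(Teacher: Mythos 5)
Your overall strategy is exactly the one the paper uses: split off $\ker\tau$ via the exact sequence $0\to\Hom(V,\ker\tau)\to\mf{g}^{(1)}\xrightarrow{\tau_*}\mf{h}^{(1)}\to 0$, prove (3) $\Leftrightarrow$ (4) by a snake/five-lemma chase on the commutative diagram with exact rows relating the flag subspaces of $\mf{g}^{(1)}$, $\mf{h}^{(1)}$ and $\Hom(V/V_{k-1},\ker\tau)$, quote the classical Cartan test for (1) $\Leftrightarrow$ (3), and deduce the bound and (1) $\Leftrightarrow$ (2) from the dimension count $\dim\mf{g}^{(1)}=n\dim\ker\tau+\dim\mf{h}^{(1)}$. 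The diagram argument for (3) $\Leftrightarrow$ (4) is correct and is precisely the paper's.

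The gap is in the character bookkeeping, exactly at the point you flagged as delicate. You assert both that $s^{\mf{g}}_k=s^{\mf{h}}_k$ for \emph{all} $k$ (from $\mf{g}_k=\tau^{-1}(\mf{h}_k)$ for every $k$, so in particular $\mf{g}_n=\ker\tau$) and that $\sum_k k\,s^{\mf{g}}_k=n\dim\ker\tau+\sum_k k\,s^{\mf{h}}_k$; these contradict each other as soon as $\ker\tau\neq 0$. Your telescoping identity $\sum_k k(\dim\mf{g}_{k-1}-\dim\mf{g}_k)=\sum_k\dim\mf{g}_k$ silently drops the term $-n\dim\mf{g}_n$, i.e.\ it assumes $\mf{g}_n=0$, which is incompatible with $\mf{g}_n=\ker\tau$. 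The convention matters for the truth of the statement: if one takes $\mf{g}_n=\ker\tau$ literally, then $\sum_k k\,s^{\mf{g}}_k=\sum_k k\,s^{\mf{h}}_k$, and the asserted bound $\dim\mf{g}^{(1)}\leq\sum_k k\,s^{\mf{g}}_k$ would \emph{fail} whenever $\mf{h}$ is involutive and $\ker\tau\neq 0$, since $\dim\mf{g}^{(1)}=n\dim\ker\tau+\dim\mf{h}^{(1)}$. The intended convention (the paper's ``$\mf{g}_n=0$'') makes the top character absorb the kernel: $s^{\mf{g}}_k=s^{\mf{h}}_k$ for $k<n$ but $s^{\mf{g}}_n=s^{\mf{h}}_n+\dim\ker\tau$, which is what the paper's proof records and what yields $\sum_k k\,s^{\mf{g}}_k=n\dim\ker\tau+\sum_k k\,s^{\mf{h}}_k$. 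With that correction your dimension count, and hence the bound and the equivalence (1) $\Leftrightarrow$ (2), goes through and coincides with the paper's proof.
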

In other words, the tableau map $\tau$ is involutive if and only if $\im \tau$ is involutive as a classical tableau. 
\begin{definition}
	A tableau map $\tau\: \mf{g}\to \Hom(V, W)$ is \textbf{involutive} if there is a flag $(V_k)_k$ such that Cartan's bound is achieved:
	\[
		\dim \mf{g}^{(1)} = \sum_{k=1}^n k s^{\mf{g}}_k.
	\]
\end{definition}

\begin{proof}[Proof of Proposition \ref{prop:CartansBoundTableauMaps}]
	The map $\tau\: \mf{g}\to \mf{h}\subseteq \Hom(V, W)$ induces a map 
	\[
		\tau_*\: \Hom(V, \mf{g})\to \Hom(V, \mf{h}).
	\]
	If $(V_k)_k$ is a flag for $V$ with adapted basis $\{e_k\}$, then there is the following diagram
	\[
	\xymatrix{
		{} & 0 \ar[d] & 0 \ar[d] & 0\ar[d] & {} \\
		0 \ar[r]& \Hom(V/V_k, \ker \tau) \ar[r] \ar[d] & (\mf{g}^{(1)})_k \ar^-{\tau_*}[r] \ar[d] & (\mf{h}^{(1)})_k \ar[r]\ar[d] & 0 \\
		0 \ar[r]& \Hom(V/V_{k-1}, \ker \tau) \ar[r]\ar^-{\iota_{e_k}}[d] & (\mf{g}^{(1)})_{k-1} \ar^-{\tau_*}[r] \ar^-{\iota_{e_k}}[d]& (\mf{h}^{(1)})_{k-1} \ar^-{\iota_{e_k}}[d] \ar[r] & 0 \\
		0\ar[r] & \ker \tau \ar[r] \ar[d]& \mf{g}_{k-1} \ar^-{\tau}[r] \ar[d] & \mf{h}_{k-1} \ar[d] \ar[r] & 0 \\ 
		{} & 0 & \bullet \ar^-{\cong}[r] & \bullet & {}		
	}
	\]
	with exact rows. From the Snake Lemma, it follows that $\iota_{e_k}\: (\mf{g}^{(1)})_{k-1}\to \mf{g}_{k-1}$ is surjective if and only if $\iota_{e_k} \:(\mf{h}^{(1)})_{k-1}\to \mf{h}_{k-1}$ is, which shows that (3) and (4) are equivalent.
	
	Furthermore, the diagram implies that
	\[
		s^{\mf{g}}_k = \begin{cases}
			s^{\mf{h}}_k, &k<n\\
			s^{\mf{h}}_n + \dim \ker \tau, & k=n.
		\end{cases}
	\]
	Since $\dim \mf{g}^{(1)} = \dim h^{(1)} + n \dim \ker \tau$, equivalence of (1) and (2) also follows. Equivalence of (1) and (3) is standard (e.g. \cite[Proposition IV.3.6]{BryantChern1991}).
\end{proof}

\bibliographystyle{abbrv}
\bibliography{bib.bib}

\end{document}